\newcommand{\mcm}[3]{\newcommand{#1}[#2]{{\ensuremath{#3}}}} 
\mcm{\tuple}{1}{\langle #1 \rangle}
\mcm{\name}{1}{\ulcorner #1 \urcorner}
\mcm{\Nbb}{0}{\mathbb{N}}
\mcm{\Zbb}{0}{\mathbb{Z}}
\mcm{\Rbb}{0}{\mathbb{R}}
\mcm{\Cbb}{0}{\mathbb{C}}
\mcm{\Qbb}{0}{\mathbb{Q}}
\mcm{\Acal}{0}{\cal A}
\mcm{\Bcal}{0}{\cal B}
\mcm{\Ccal}{0}{\cal C}
\mcm{\Dcal}{0}{\cal D}
\mcm{\Ecal}{0}{\cal E}
\mcm{\Fcal}{0}{\cal F}
\mcm{\Gcal}{0}{\cal G}
\mcm{\Hcal}{0}{\cal H}
\mcm{\Ical}{0}{\cal I}
\mcm{\Jcal}{0}{\cal J}
\mcm{\Kcal}{0}{\cal K}
\mcm{\Lcal}{0}{\cal L}
\mcm{\Mcal}{0}{\cal M}
\mcm{\Ncal}{0}{\cal N}
\mcm{\Ocal}{0}{{\cal O}}
\mcm{\Pcal}{0}{{\cal P}}
\mcm{\Qcal}{0}{{\cal Q}}
\mcm{\Rcal}{0}{{\cal R}}
\mcm{\Scal}{0}{{\cal S}}
\mcm{\Tcal}{0}{{\cal T}}
\mcm{\Ucal}{0}{{\cal U}}
\mcm{\Vcal}{0}{{\cal V}}
\mcm{\Wcal}{0}{{\cal W}}
\mcm{\Xcal}{0}{{\cal X}}
\mcm{\Ycal}{0}{{\cal Y}}
\mcm{\Mfrak}{0}{\mathfrak M}
\mcm{\restric}{0}{\upharpoonright}
\mcm{\upset}{0}{\uparrow}
\mcm{\onto}{0}{\twoheadrightarrow}
\mcm{\smallNbb}{0}{{\small \mathbb{N}}}
\DeclareMathOperator{\preop}{op}
\mcm{\op}{0}{^{\preop}}
\newcommand{\se}{\subseteq}
\newcommand{\theoremize}[2]{\newaliascnt{#1}{thm} \newtheorem{#1}[#1]{#2} \aliascntresetthe{#1}}
\theoremstyle{plain}
\newtheorem{thm}{Theorem}[section]
\theoremstyle{definition}
\theoremstyle{plain}
\title{\scshape All graphs have tree-decompositions displaying their topological ends}
\author{Johannes Carmesin
\medskip 
\\
  {University of Cambridge}
}
\date{}
\newcommand{\sm}{\setminus}
\newtheorem{theorem}{Theorem}
\begin{document}

\maketitle
\begin{abstract}
We show that every connected graph has a spanning tree that displays all its topological ends. This proves a 1964 conjecture of 
Halin in corrected form, and settles a problem of Diestel from 1992.
\end{abstract}

\section{Introduction}

In 1931, Freudenthal introduced a notion of \emph{ends} for second countable Hausdorff spaces 
\cite{Freudenthal31}, and in particular for locally finite graphs \cite{Freudenthal_graph_ends}. 
Independently, in 1964, Halin \cite{halin64} introduced a notion of \emph{ends} for graphs, 
taking his cue directly from Carath\'{e}odory's \emph{Primenden} of simply connected regions of the 
complex plane \cite{Caratheodory_1913}. 
For locally finite graphs these two notions of ends agree.

For graphs that are not locally finite, Freudenthal's topological definition still makes sense, and 
gave rise to the notion of \emph{topological ends} of arbitrary graphs \cite{Ends}. In general, this 
no longer agrees with Halin's notion of ends, although it does for trees. 

Halin \cite{halin64} conjectured that the end structure of every connected graph can be displayed by 
the ends of a suitable spanning tree of that graph. He proved this for countable graphs. Halin's 
conjecture was finally disproved in the 1990s by Seymour and Thomas \cite{ST:end-faithful}, and 
independently by Thomassen \cite{Thomassen:endfaithful}.

In this paper we shall prove Halin's conjecture in amended form, based on the topological notion of 
ends rather than Halin's own graph-theoretical notion. 
We shall obtain it as a corollary of the following theorem, which proves a conjecture of Diestel  
\cite{Diestel_end_spanningtree_survey} of 1992 (again, in amended form): 
\begin{theorem}\label{mainthm_morally}\label{undom_td}
Every graph has a tree-decomposition $(T,\Vcal)$ of finite adhesion such that the ends of $T$ define 
precisely the topological ends of $G$. 
\newline
\noindent \emph{See \autoref{prelims1} for definitions.}
\end{theorem}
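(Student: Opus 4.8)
The plan is to realise $(T,\Vcal)$ as the tree-decomposition induced by a nested system $N$ of finite-order topological separations of $G$ --- finite sets of vertices and inner edge-points of $|G|$, each with a chosen side --- assembled so that the ends of $T$ come to match the topological ends of $G$. One may equivalently picture $(T,\Vcal)$ as a limit of finite-adhesion tree-decompositions $(T_n,\Vcal_n)$ refining one another, the $n$-th one arising from a nested subsystem $N_n$ of order at most $n$, with $N_1\se N_2\se\cdots$ and $N=\bigcup_n N_n$. I would take from the theory of tree sets / nested separation systems that a nested system of finite-order separations of $G$ induces a tree-decomposition whose tree $T$ has its edges in bijection with the members of $N$ and whose adhesion set on an edge is exactly the corresponding separator; hence asking every member of $N$ to have finite order makes $(T,\Vcal)$ have finite adhesion for free. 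So the task is to build $N$ with the right properties: \emph{richness} --- any two distinct topological ends of $G$ are separated by some member of $N$ --- and \emph{tidiness} --- for every topological end $\omega$ and every finite $F$ that can be finitely separated from $\omega$, some member of $N$ separates $\omega$ from $F$ --- plus the bookkeeping that every vertex and edge of $G$ lands in some bag, which is a further condition on $N$.

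We may assume $G$ connected (otherwise join the per-component constructions with a tree). Two basic facts drive the construction: distinct topological ends are separated by some finite-order topological separation --- in effect, that the topological end space of $G$ is Hausdorff with the separator complements as a neighbourhood basis --- and a consistency lemma, that a topological end $\omega$ orients every finite-order separation, towards the side in which every ray of $\omega$ has a tail, and does so consistently. Starting from separations witnessing richness (one per pair of ends) and tidiness (one per admissible pair $(\omega,F)$), these will in general cross one another, and the real work is to uncross them: submodularity of the order function lets one replace a crossing pair by a nested pair that still performs at least as much separating --- in particular still separates whichever ends the old pair did --- and, organising this uncrossing layer by layer in the order $n=1,2,\dots$ and diagonalising, one obtains a nested $N=\bigcup_n N_n$ with the $N_n$ nested, of order at most $n$, and nested inside one another. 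It matters that the whole construction be organised around the topological end space rather than Halin's finer, graph-theoretic notion of end: a suitable nested system does still yield a tree-decomposition in the end, but --- as the counterexamples of Seymour--Thomas and Thomassen show --- if one instead insists on separating all \emph{Halin} ends, the resulting tree invariably acquires ends that no Halin end accounts for.

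It remains to verify the correspondence. In one direction, each topological end $\omega$ orients $N$ consistently; by tidiness no node of $T$ fixes this orientation, so it is an end $\eta(\omega)$ of $T$, and by richness $\omega\mapsto\eta(\omega)$ is injective. The hard direction --- and the step I expect to be the main obstacle --- is surjectivity: that $(T,\Vcal)$ has \emph{no spurious ends}, i.e.\ every end $\eta$ of $T$ equals $\eta(\omega)$ for some topological end $\omega$ of $G$. This is genuinely delicate, and finite adhesion alone is nowhere near enough: even a tree-decomposition of adhesion $1$ can have an end of $T$ along which the bags ``collapse'' onto a single vertex --- take $G$ a star $K_{1,\aleph_0}$ and decompose it along a ray --- so $N$ must be engineered precisely so that every end of $T$ runs away to a genuine end of $G$. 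Concretely, an end $\eta$ of $T$ gives a strictly decreasing chain of sides $B_1\supsetneq B_2\supsetneq\cdots$ with finite separators $S_1,S_2,\dots$, and one must show $\bigcap_i\overline{B_i}$ (closures taken in $|G|$) harbours a topological end rather than degenerating onto a finite set of vertices --- and then that this end induces $\eta$. Securing this means building $N$ not just rich and tidy but ``saturated'': whenever a nested chain of separations could close down onto a finite set $F$ in $|G|$, some member of $N$ must already have separated $F$ off; combined with Menger-type fan arguments inside $|G|$ and a diagonal/compactness argument, this should force the intersection along every end of $T$ to be a single topological end. Since the theorem is false for Halin's original notion of end, any proof must at some point invoke a feature of topological ends not shared by Halin ends, and I expect it is exactly here --- in the lemma ``every end of $T$ captures a topological end of $G$'' --- that this is used, and that the proof of that lemma is the technical heart of the argument.
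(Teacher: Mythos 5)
Your overall strategy (a nested system of finite-order separations distinguishing the topological ends, turned into a tree-decomposition, with special care that the tree acquires no spurious ends) is the right one in spirit, but the step you take ``for free'' is precisely where the argument would fail. For an \emph{infinite} nested system $N$ there is no off-the-shelf theorem that $N$ induces a tree-decomposition with edges in bijection with $N$ and adhesion sets equal to the separators: chains in $N$ of order type $\omega$ (or worse, densely ordered chains) whose members have unbounded order need not correspond to anything in a decomposition tree, and the union of such a chain has infinite boundary, so one cannot simply pass to maximal elements either. The paper confronts exactly this: it first arranges (\autoref{auxi1}) that order strictly increases along inclusion in $N$, then, instead of invoking a nested-set-to-tree correspondence, it extracts from $N$ a family of \emph{pairwise disjoint} separations, handling infinite increasing chains $X_1\subsetneq X_2\subsetneq\cdots$ by replacing them with the differences $Y_i=X_i\sm X_{i-1}$ (\autoref{nested}), which yields a \emph{star} decomposition around a finite set $W$ (\autoref{star_td}); the final $(T,\Vcal)$ is then built by recursively refining the leaf stars and taking a limit, and the covering and finite-adhesion conditions are verified by a distance argument, not automatically. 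Your ``no spurious ends'' worry is then resolved structurally rather than by your proposed ``saturation'': each leaf bag contains a topological end and is connected off the centre, the adhesion sets at level $n$ are forced to lie at distance at least $n-1$ from the base vertex, and the Infinity Lemma produces, for every end of $T$, a ray of $G$ living in it, undominated because the finite adhesion sets separate it from any candidate dominating vertex. Note also that your ``tidiness'' requirement (separating an end from every finite set it can be separated from) is achieved in the paper by an auxiliary-graph trick: attach a $K_{\aleph_0}$ to the neighbourhood of $W$ and separate its end from the others.

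The second gap is the construction of the nested distinguishing system itself, which you compress into ``uncross crossing pairs using submodularity, layer by layer, and diagonalise''. In an infinite graph a single candidate separation can cross infinitely many members of the current nested family, and pairwise uncrossing gives no convergence; moreover one must preserve \emph{efficiency} of the distinguishing separations while uncrossing, which naive corner-taking does not do. This is the technical heart of the paper (Section~5, culminating in \autoref{nested_set_exists}): it works with robust profiles, passes to torsos of $\Ncal$-blocks with a lifting operation $Y\mapsto\tilde Y$ back to $G$, restricts attention to \emph{tight} separations not disqualified by higher-order ones, and uses Halin's theorem that only finitely many minimal separators of bounded size separate two given vertices to prove that each relevant separation crosses only finitely many candidates (\autoref{not_nested_only_finitely_many}); only then does a minimal-crossing exchange argument go through. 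So while your outline identifies the right milestones, both the ``nested $\Rightarrow$ tree-decomposition'' step and the uncrossing step are genuine missing arguments, and the published proof replaces the former by a different mechanism (iterated star decompositions) rather than repairing it.
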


\vspace{0.3 cm}

The tree-decompositions constructed for the proof of \autoref{mainthm_morally} have several further 
applications. In 
\cite{C:undom_td} we use them to answer the question to what extent the ends of a graph - now in 
Halin's sense - have a tree-like structure at all.
In \cite{C:cycle_matroids}, we apply \autoref{mainthm_morally} to show that the topological cycles 
of any graph together with its topological ends induce a matroid.
We remark that although the existence of a tree-decomposition as in \autoref{mainthm_morally} 
for an arbitrarily subset of the vertex-ends in place of the topological ends implies the existence 
of a suitable spanning tree in Halin's sense for that subset by \autoref{proof_more_general}, the 
converse is not true, see \autoref{sketch_t2}.

\vspace{0.3 cm}

This paper is organised as follows.
In \autoref{prelims1} we explain the problems of Diestel and Halin in detail, after having given 
some basic definitions. 
In \autoref{ex_sec} we continue with examples related to these problems.
\autoref{prelims2} only contains material that is relevant for \autoref{nested_sets} in which we 
prove that every graph has a nested set of separations distinguishing the vertex-ends efficiently.
In \autoref{sec:undom_td}, we use this theorem to prove \autoref{undom_td}. Then we deduce Halin's 
amended conjecture. Finally, \autoref{concluding} contains concluding remarks.

\section{Definitions}\label{prelims1}

Throughout, notation and terminology for graphs are that of~\cite{DiestelBook10}.
And  $G$ always denotes a graph.

A \emph{vertex-end} in a graph $G$ is an equivalence class of rays (one-way infinite paths), where 
two rays are equivalent if they cannot be separated in $G$ by removing finitely many vertices. Put 
another way, this equivalence relation is the transitive closure of the relation relating two rays 
if they intersect infinitely often. 

\begin{eg}
 The vertex-ends of rooted trees are (in bijection with) the rays starting at the root; of course 
vertex-ends do not depend on the choice of a root. 
\end{eg}

Let $X$ be a locally connected Hausdorff space. 
Given a subset $Y\se X$, we write $\overline Y$ for the closure of $Y$, and 
$F(Y) := \overline Y \cap \overline{X \setminus Y}$ for its frontier. In
order to define the topological ends of $X$, we consider infinite sequences
${U_1\supseteq U_2 \supseteq  . . .}$ of non-empty connected open subsets of $X$ such that each $F( U_i)$ is compact
and $\bigcap_{i\geq 1} \overline U_i=\emptyset$. 
We say that two such sequences ${U_1\supseteq U_2 \supseteq  . . .}$ and ${U_1'\supseteq U_2' \supseteq  . . .}$ are \emph{equivalent} if for every $i$ there 
is some $j$ with  $U_i\supseteq U_j'$.
This relation is transitive and symmetric \cite[Satz 2]{Freudenthal31}.
The equivalence classes of those sequences are the \emph{topological ends} of $X$ 
\cite{{Ends},{Freudenthal31},{HughesRanicki}}. 

For the simplicial complex of a graph $G$, Diestel and K\"uhn described the topological ends 
combinatorically: a vertex \emph{dominates} a vertex-end $\omega$ if for some (equivalently: every) 
ray $R$ belonging to $\omega$ there is an infinite fan of $v$-$R$-paths that are vertex-disjoint 
except at $v$. In \cite{Ends}, they proved that the  topological ends are given by the undominated 
vertex-ends. 
Hence in this paper, we take this as our definition of \emph{topological end of $G$}.

\begin{eg}
 For locally finite graphs the notions of vertex-ends and topological ends agree.
\end{eg}

\begin{eg}
For trees the notions of vertex-ends and topological ends agree. Hence we just call the vertex-ends 
of trees \emph{ends}. 
\end{eg}

\vspace{0.3 cm}

For us, a \emph{separation} is an (ordered) pair $(A,B)$ of vertex sets $A$ and $B$ such that no 
edge has an endvertex in $A\sm B$ and the other endvertex in $B\sm A$. The set $A\cap B$ is called 
the \emph{separator} of $(A,B)$. The size of the separator is the \emph{order} of $(A,B)$. The sets 
$A$ and $B$ are called the \emph{sides} of the separation. 
The \emph{reverse} of the separation $(A,B)$ is the separation $(B,A)$. 

Given two separations $(A,B)$ and $(C, D)$, we write $(A,B)\leq (C,D)$ if $A\se C$ and $D\se B$. 
These separations are \emph{nested} if $(A,B)\leq (C,D)$ or one of the other three possibilities 
obtained by replacing $(A,B)$ or $(C,D)$ by their reverse.
Formally, $(A,B)$ and $(C,D)$ are nested if $(A,B)\leq (C,D)$, $(B,A)\leq (C,D)$, $(A,B)\leq (D,C)$ 
or $(B,A)\leq (D,C)$. 
\begin{rem}
 Most separations of interest are `proper', see below. By \autoref{proper_nested}, proper 
separations $(A,B)$ and $(C, D)$ satisfy $(A,B)\leq (C,D)$ already if $A\se C$. In this sense our 
definition of nestedness corresponds to the notion of nestedness for sets. 
\end{rem}

A separation $(A,B)$ is \emph{proper} if every vertex in the separator $A\cap B$ has a neighbour in 
$A\sm B$ and $B\sm A$. 
\begin{obs}\label{proper_nested}
For proper separations $(A,B)$ and $(C,D)$ the following are equivalent.
\begin{enumerate}
 \item $(A,B)\leq(C,D)$;
 \item $A\se C$;
 \item $A\sm B\se C\sm D$.
\end{enumerate}
\end{obs}

\begin{proof}
 As $(A,B)$ is proper, the side $A$ is determined by the set $A\sm B$; indeed, it is  $A\sm B$ 
together with its neighbourhood. Conversely, also the side $A$ determines the set $A\sm B$: this 
set consists of those vertices of $A$ that have all their neighbours in $A$. 
So (2) and (3) are equivalent. 

Clearly (1) implies (2). Now conversely assume that $A\se C$. 
By the above it suffices to show that $D\sm C$ is included in $B\sm A$. 
In other words: $G\sm C$ is included in $G\sm A$. This follows from $A\se C$. 
\end{proof}

A vertex-end $\omega$ \emph{lives} in a side $B$ of a separation $(A,B)$ of finite order if the 
side $B$ includes a ray 
belonging to $\omega$. In this case $B$ includes a subray of every ray belonging to 
$\omega$, see \autoref{sep_ray}.
\begin{figure}
\begin{center}
   	  \includegraphics[height=3cm]{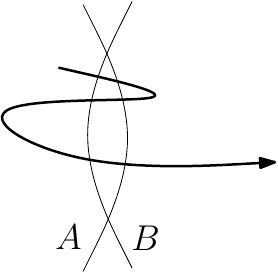}
   	  \caption{Every ray traverses the finite separator $A\cap B$ finitely often 
and then is eventually included in one of the sides $A$ or $B$.}\label{sep_ray}
\end{center}
   \end{figure}
A separation $(A,B)$ of finite order 
\emph{distinguishes} two vertex-ends $\omega$ and $\mu$ if
one of them lives in the side $A$ and the other lives in the side $B$. It distinguishes them 
\emph{efficiently} if $(A,B)$ has minimal order amongst all separations distinguishing $\omega$ and 
$\mu$.

A \emph{tree-decomposition} of a graph $G$ consists of a tree $T$ together with a family of 
subgraphs\footnote{We denote the vertex set of a graph $G$ by $V(G)$. } $(P_t|t\in V(T))$ of $G$ 
such that every vertex and edge of $G$ is in at least one of 
these subgraphs, and such that if $v$ is a vertex of both $P_t$ and $P_w$, then it is a vertex of 
each $P_u$, where $u$ lies on the $v$-$w$-path in $T$.
We call the subgraphs $P_t$ the \emph{parts} of the tree-decomposition.
The \emph{adhesion} of a tree-decomposition is finite if adjacent parts intersect only finitely.
Given an edge $tu$ of $T$, we denote by $T_t$ the subtree of $T-tu$ that contains $t$.
Given a directed edge $tu$ of $T$, the \emph{separation corresponding to $tu$} is the separation 
$(A_t,A_u)$, where $A_i$ is the union of all parts $P_x$ with $x\in T_i$ for $i=u,t$.

\vspace{0.3 cm}

In \cite{{BC:determinacy},{hp:transi}, {T:wqo_planar}}, tree-decompositions of finite adhesion are 
used to study the structure of infinite graphs.
In \cite[Problem 4.3]{Diestel_end_spanningtree_survey}, Diestel wanted to know whether every graph 
$G$ has a tree-decomposition $(T,P_t|t\in V(T))$ of finite adhesion that somehow encodes the 
structure of the graph with its ends. 

Let us be more precise. 
Given a vertex-end $\omega$, we take $O(\omega)$ to consist of those oriented edges $tu$ of $T$ such 
that $\omega$ lives in its corresponding separation.
Note that $O(\omega)$ contains precisely one of the two directions $tu$ and $ut$ of each edge of 
the tree.
Furthermore this orientation $O(\omega)$ of $T$ points towards a node of $T$ or to an end of $T$.
We say that $\omega$ \emph{lives} in the part for that node or that end, respectively.

A vertex-end $\omega$ is \emph{thin} if every set of vertex-disjoint rays belonging to $\omega$ is 
finite; otherwise $\omega$ is \emph{thick}.
Diestel asked whether every graph has a tree-decomposition $(T,P_t|t\in V(T))$ of finite adhesion 
such that different thick vertex-ends live in different parts and 
such that the ends of $T$ \emph{define precisely} the thin vertex-ends; here the ends of $T$ 
\emph{define precisely} a set $\Psi$ of vertex-ends of $G$ if
in every end of $T$ there lives a unique vertex-end and it is in $\Psi$ and conversely every 
vertex-end in $\Psi$ lives in some end of $T$, see \autoref{def_precisely}. 

\begin{figure}
\begin{center}
   	  \includegraphics[height=5cm]{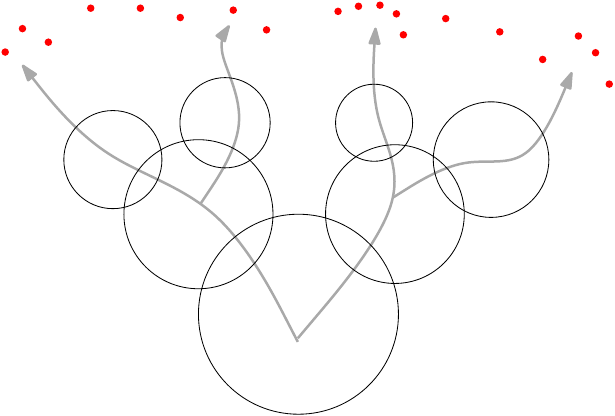}
   	  \caption{The ends of the decomposition tree - this tree is indicated in grey -  
define precisely the vertex-ends of the graph indicated by dots. Other vertex-ends 
living in parts are not drawn. }\label{def_precisely}
\end{center}
   \end{figure}

Unfortunately, that is not true; in \autoref{sketch_t2}, we construct a graph such that each of 
its 
tree-decompositions of finite adhesion has a part in which two (thick) vertex-ends live. 
In \autoref{thin_not_distinguished} we refine that construction by constructing a graph such that 
there live two thin vertex-ends in some part of every such a tree-decomposition.

Hence there remains the open question whether there is a natural subclass of the vertex-ends 
(similar 
to the class of thin vertex-ends) such that every graph has a tree-decomposition  of finite 
adhesion such that the ends of its decomposition tree define precisely the vertex-ends in that 
subclass. Another question that arises in this context is: what is the largest possible natural 
class of vertex-ends such that every graph has a 
tree-decomposition distinguishing the vertex-ends in that class?
\autoref{undom_td} above answers the first question affirmatively.
In \autoref{concluding}, we show how \autoref{undom_td} can be used to obtain a satisfying answer 
to the second question. 

It is impossible to construct a tree-decomposition as in \autoref{undom_td} with the additional property that for any two topological ends $\omega$ and $\mu$, 
there is a separation corresponding to an edge of the tree that separates $\omega$ and $\mu$ efficiently, see \autoref{not_efficiently}.

A recent development in the theory of infinite graphs seeks to extend theorems about finite graphs and their cycles to infinite graphs
and the topological circles formed with their ends, see for example
\cite{{BergerBruhnDeg},   {Degree},{cyclesI},{CyclesII}, {Georgakopoulos2009}, {Arboricity}}, and 
\cite{RDsBanffSurvey} for a survey.
We expect that \autoref{undom_td} has further applications in this direction aside from the one 
mentioned in the introduction.

\vspace{0.3 cm}

A rooted spanning tree $T$ of a graph $G$ is \emph{end-faithful} for a set $\Psi$ of vertex-ends if 
each vertex-end $\omega\in \Psi$ is uniquely represented by $T$ in the sense that
$T$ contains a unique ray belonging to $\omega$ and starting at the root.
For example, every normal spanning tree is end-faithful for all vertex-ends.
Halin conjectured that every connected graph has an end-faithful tree for all vertex-ends. 
At the end of \autoref{sec:undom_td}, we show that \autoref{undom_td} implies the following nontrivial weakening of this disproved conjecture:

\begin{cor}\label{Halin_type_intro}
Every connected graph has an end-faithful spanning tree for the topological ends.
\end{cor}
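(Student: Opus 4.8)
The plan is to read off the spanning tree from a tree-decomposition supplied by \autoref{undom_td}. Fix a connected graph $G$ and apply \autoref{undom_td} to obtain a tree-decomposition $(T,(P_t)_{t\in V(T)})$ of finite adhesion whose tree-ends define precisely the topological ends of $G$; root $T$ at a node $r$. For $v\in V(G)$ let $t(v)$ be the node of $T$ closest to $r$ whose part contains $v$, so the sets $\beta(t):=\{v:t(v)=t\}$ partition $V(G)$. Since the nodes whose parts contain a fixed vertex induce a subtree of $T$, and every edge lies in a common part, $t(v)$ and $t(w)$ are comparable in the tree-order of $T$ whenever $vw\in E(G)$; in particular every vertex of $P_t$ has $t(\cdot)$ on the $r$--$t$ path. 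For $t\ne r$ let $\sigma_t$ be the adhesion set of $P_t$ to the part of its parent; it is finite, and nonempty because $G$ is connected. It is convenient first to tidy the decomposition so that, for every $v$, the bag-subtree of $v$ is the subtree spanned by the nodes whose parts contain an edge at $v$ (iteratively delete $v$ from a bag $P_t$ when $t$ is a leaf of that subtree carrying no edge at $v$); this only shrinks adhesions and, as separations of $G$ and the vertex sets they span do not depend on the bags, changes neither $T$ nor which ends are displayed.

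Next I would build a spanning tree $S$ of $G$ adapted to $T$. Process the parts in order of nondecreasing distance of their node from $r$, keeping a forest on $V(G)$ to which edges are only ever added. When $P_t$ is reached, treat the components $C$ of $G[V(P_t)]$ one at a time: if $C$ meets the current forest, add the edges of a spanning tree of $G[V(C)]$ one by one, discarding any edge both of whose ends already lie in one forest-tree, and choosing the spanning tree so that each $v\in\beta(t)\cap V(C)$ is attached through a neighbour with $t(\cdot)$ an ancestor of $t$ wherever the component structure allows; if $C$ meets the current forest nowhere, defer it and attach it later, again through an adhesion set towards $r$, at the first stage at which some part links it to the forest. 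The result $S$ is acyclic by the discarding rule, and connected --- hence a spanning tree --- since the ends of any $vw\in E(G)$ lie in one component of some $G[V(P_t)]$ and are placed in a common forest-tree when that component is handled, while the forest only grows. Root $S$ at a vertex $\rho\in V(P_r)$.

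The technical heart is to show that $S$ is \emph{normal with respect to $T$}: for every $v$, all ancestors of $v$ in $S$ lie in parts $P_{t'}$ with $t'$ on the $r$--$t(v)$ path. I would prove this by induction along the processing order; the ``attach towards $r$'' clause and the treatment of deferred components are exactly what drive the induction, and I expect this bookkeeping to be the hardest part, because $(T,(P_t))$ cannot in general be refined so that all parts are connected, so ``islands'' inside a part must be linked up from deeper in $T$ without spoiling normality. Granting normality, two consequences follow. First, the induced subgraph of $S$ on each side of any $T$-edge is connected, so $S$ meets the corresponding separation of $G$ only inside its finite adhesion set. Second, for any end $\epsilon$ of $T$, with ray $r=t_0,t_1,\dots$, the sets $S_n:=S\bigl[\bigcup\{V(P_{t'}):t'\text{ lies in the subtree of }T\text{ at }t_n\}\bigr]$ are connected and nested decreasing, whence any two rays of $S$ from $\rho$ whose vertices eventually enter every $S_n$ are either equal or share a vertex lying in every $S_n$.

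Finally I would deduce end-faithfulness. Let $\omega$ be a topological end. By \autoref{uni_end_in_end} together with the ``defines precisely'' hypothesis, $\omega$ lives in a unique end $\epsilon$ of $T$; chasing the finite adhesion sets $\sigma_{t_n}$ down the ray of $\epsilon$ and using connectedness of the $S_n$, one extracts from $S$ a ray $R_\omega$ from $\rho$ whose vertices eventually enter every $S_n$, and any ray of $G$ of this kind belongs to the unique vertex end living in $\epsilon$, namely $\omega$; so $\omega$ is represented by $R_\omega$. For uniqueness, suppose $R\ne R'$ were rays of $S$ from $\rho$ both belonging to $\omega$. Then both eventually enter every $S_n$, so by the previous paragraph they share a vertex $z$ lying in every $S_n$ --- i.e.\ $z$ lies in parts of $T$ arbitrarily far towards $\epsilon$. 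After the tidying step $z$ then has edges in such parts, so $z$ sends an infinite fan into the rays of $\omega$ and hence dominates $\omega$; but $\omega$, being a topological end, is undominated --- a contradiction. (This is where undominatedness must be fed in; alternatively one argues directly that if $R\ne R'$ diverge at a last common vertex then their disjoint tails, joined by the infinitely many pairwise disjoint paths between them provided by undominatedness, force some finite adhesion set far down the ray of $\epsilon$ to carry more pairwise disjoint crossings than its size permits.) Thus every topological end of $G$ has exactly one representing ray from $\rho$ in $S$; that is, $S$ is end-faithful for the topological ends.
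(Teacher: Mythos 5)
Your overall plan---read the spanning tree off the tree-decomposition of \autoref{undom_td}---is the same as the paper's, but you use \autoref{undom_td} only as a black box, and that is not enough; two steps have genuine gaps. First, the ``technical heart'' you defer (normality of $S$ with respect to $T$, and its consequence that each $S_n=S\bigl[\bigcup_{t'\ge t_n}V(P_{t'})\bigr]$ is connected) is not just hard bookkeeping: it is unachievable in general. The statement of \autoref{undom_td} does not guarantee that the up-closures $\bigcup_{w\ge u}V(P_w)$ induce connected subgraphs of $G$. For example, let $G$ be a ray $v_1v_2\ldots$ with pendant vertices $p_1,p_2,\ldots$ attached to $v_1$, and take the path-decomposition with bags $P_{t_n}=\{v_1,v_n,v_{n+1}\}$, adding $p_k$ to the bag $P_{t_{5k}}$: this has finite adhesion and its decomposition tree displays the unique (undominated) end, yet $G\bigl[\bigcup_{m\ge n}V(P_{t_m})\bigr]$ is disconnected for every $n\ge 3$, so \emph{no} spanning tree has all $S_n$ connected, and your deferred ``islands'' must be attached from below the relevant node, destroying the claimed normality. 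This is precisely why the paper does not argue from the statement of \autoref{undom_td} alone: its proof invokes \autoref{rem:connectedness}, i.e.\ extra properties of the particular decomposition built in the proof of \autoref{undom_td} (connectedness of $\bigcup_{w\ge u}V(P_w)\setminus V(P_t)$ and disjointness of consecutive adhesion sets), which let it place finite connected linking graphs $Q_t$ into the spanning tree at infinitely many nodes of every ray of $T$; uniqueness then follows because two disjoint $S$-rays in the same end would be joined by infinitely many of these $Q_t\subseteq S$, forcing a cycle in the tree $S$.

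Second, even granting your construction, the key inference in your uniqueness step is invalid: from ``$z$ lies in every $S_n$ and (after tidying) has incident edges in parts arbitrarily far towards $\epsilon$'' you conclude that $z$ dominates $\omega$. That is false. Let $G$ consist of a ray $v_1v_2\ldots$, a vertex $z$, and vertices $u_1,u_2,\ldots$ with $u_n$ adjacent exactly to $z$ and $v_1$; with bags $P_{t_n}=\{z,v_1,u_n,v_n,v_{n+1}\}$ one gets a tidied tree-decomposition of finite adhesion whose single tree-end displays the unique end $\omega$ of $G$, and $z$ (as well as $v_1$) lies in every bag and has an incident edge in every bag, yet neither dominates $\omega$, since every path from $z$ to the ray passes through $v_1$ and every fan at $v_1$ through the $u_n$ dead-ends at $z$. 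So undominatedness cannot be fed in this way; your parenthetical alternative also fails, because the disjoint paths joining the two tails may lie entirely beyond any fixed adhesion set and never cross it. (By contrast, the existence half of end-faithfulness needs none of this machinery: as in the paper, the Star--Comb Lemma already gives, in \emph{any} spanning tree, a ray to each undominated end.)
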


One might ask whether it is possible to construct an end-faithful spanning tree for the topological 
ends with the additional property that it does not include any ray to any other vertex-end. However, 
this is not possible in general. Indeed, Seymour and Thomas constructed a graph $G$ with no 
topological end that does not have a rayless spanning tree \cite{ST:end-faithful}.

\section{Example section}\label{ex_sec}

\begin{eg}\label{sketch_t2}
In this example we give two constructions of graphs that have no tree-decompositions of  finite 
adhesion that distinguish all vertex-ends. These 
constructions motivate the construction of \autoref{thin_not_distinguished}, where we can construct 
such a graph not only for the class of vertex-ends but for the finer class of thin vertex-ends. 

The simplest example of a graph with no such tree-decomposition for the vertex-ends known to the 
author is the (infinite) binary tree with tops, see \autoref{T2top}; this graph is obtained from 
the binary tree $T_2$ by adding one new vertex for every ray starting at the root. 
This new vertex is adjacent to 
all vertices on that ray. We call these new vertices the \emph{tops}. 
\begin{figure}
\begin{center}
   	  \includegraphics[height=4cm]{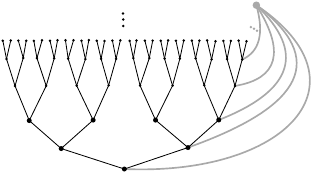}
   	  \caption{The binary tree is indicated in black. In grey we indicated the addition of a 
top along the right most ray.}\label{T2top}
\end{center}
   \end{figure}
We omit the proof that this graph has no tree-decompositions of  finite 
adhesion that distinguishes all vertex-ends\footnote{A proof can be found in an earlier version of 
this paper \cite{C:undom_td_v4}}.

A slightly more complicated example is obtained from the regular tree $T_\omega$ with countably 
infinite degree by adding fat tops; here adding \emph{fat tops} means that at each ray 
of $T_\omega$ starting at the root, we attach uncountably many, say $\aleph_1$, tops (that is new 
vertices adjacent to all vertices on the ray).

We sketch the proof that $T_\omega$ with fat tops has no tree-decompositions of 
 finite 
adhesion that distinguishes all vertex-ends. First one checks that the vertex-ends of 
$T_\omega$ with fat tops are the ends of $T_\omega$ (this proof is similar to 
\autoref{are_the_same_eg} below). The vertex-ends of $T_\omega$ with fat tops, 
however, are \emph{fat}, that is, they are dominated by uncountably many vertices. The key 
observation is the following. 
\begin{lem}\label{fat}
Let $H$ be any graph with a tree-decomposition $(T,\Vcal)$ of finite adhesion. 
Then no fat vertex-end of $H$ lives in an end of $T$. 
\end{lem}
\begin{proof}
Vertex-ends living in ends $\mu$ of $T$ can only be 
dominated by those vertices that eventually are in the separators corresponding to the edges on 
some ray in $\mu$. Since the tree-decomposition has finite adhesion, there can only be countably 
many such vertices. So vertex-ends living in ends of the decomposition tree cannot be fat. 
\end{proof}

In the final step one assumes that some 
tree-decomposition of finite adhesion distinguishes all vertex-ends. Since the graph $T_\omega$ is 
countable, it can 
only have countably many separators. A finite separator of $T_\omega$ with fat tops separates 
the same vertex-ends as their restriction to $T_\omega$ does. This essentially 
means\footnote{By contracting edges of the decomposition tree if necessary, we may assume that 
any two separations corresponding to edges of the decomposition tree distinguish different sets of 
vertex-ends. So no two such separations can have the same restriction to $T_\omega$. Hence we 
may assume that the decomposition tree has only have countably many edges.} that the 
decomposition tree has only countably many edges. So it can only have countably many nodes. Since 
there are uncountably many vertex-ends, two of them have to live in the same part as they cannot 
live in an end of the decomposition tree by \autoref{fat}. 

We remark that this proof also works for any graph obtained from $T_\omega$ by attaching 
some $\aleph_1$ fat tops at $T_\omega$. So there is a counterexample against the statement that 
every graph has a tree-decomposition of finite adhesion distinguishing its vertex-ends of 
cardinality $\aleph_1$ -- which is independent of the Continuum Hypothesis.

\end{eg}

\begin{eg}\label{thin_not_distinguished}
In this example we construct a graph $G$ such that each of its tree-decomposition of finite adhesion 
cannot distinguish all thin vertex-ends.

We start the construction with the regular tree $T_\omega$ of countably infinite degree. For each 
vertex of $T_\omega$, we add a ray through its neighbours in the next level.
Call the resulting graph $G'$, see 
\autoref{TomegaFAT+}.
\begin{figure}
\begin{center}
   	  \includegraphics[height=4cm]{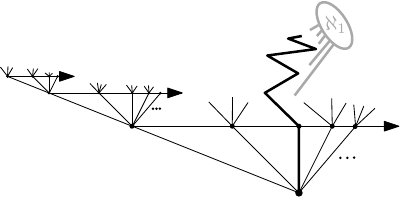}
   	  \caption{The graph $G'$ is indicated in black. We indicated in grey the addition of fat 
tops at the highlighted ray. We obtain the graph $G$ from the graph $G'$ by adding these 
fat tops at all rays starting at the root.}\label{TomegaFAT+}
\end{center}
   \end{figure}
The vertex-ends of $G'$ are those of $T_\omega$ together with one vertex-end for every newly added 
ray.

We obtain $G$ from $G'$ by adding for every ray of $T_\omega$ starting at the root a clique of 
uncountable cardinality 
$\aleph_1$ that is complete to that ray.

\begin{lem}\label{are_the_same_eg}
The vertex-ends of $G'$ are (in bijection with) the vertex-ends of $G$.
\end{lem}

\begin{proof}
Every ray of $G$ is equivalent to a ray of $G'$. Conversely any two vertex-ends of $G'$ can be 
separated by a path of $T_\omega$ starting at the root. This path still separates rays belonging 
to these vertex-ends of $G'$ in $G$. Hence $G$ and $G'$ have the same vertex-ends.  
 
\end{proof}

The thin vertex-ends of $G$ are those vertex-ends of $G'$ coming from newly added rays; indeed, 
if we remove the finite path of $T_\omega$ below such a newly added ray, all vertices on that ray 
become cut-vertices. All other  vertex-ends are each dominated by uncountably many vertices, that 
is, 
they are fat.

We use the vertices of $T_\omega$ to refer to the thin vertex-ends. More precisely, we say that 
the vertex-end \emph{sitting above} a vertex $v$ is the one to which the ray in the upward 
neighbourhood of $v$ belongs.

Suppose for a contradiction that the graph $G$ has a tree-decomposition $(T,P_t|t\in V(T))$ of 
finite 
adhesion that distinguishes all its thin vertex-ends.
First we show the following.

\begin{lem}\label{in_end}
 There is a ray $R$ of $T$ such that a fat vertex-end of $G$ lives in the 
end to which $R$ belongs.
\end{lem}

\begin{proof}
Our aim is to construct a sequence $(v_n|n\in \Nbb)$ of vertices that lie on a ray of the tree 
$T_\omega$ starting at the root together with a sequence $((A_n,B_n)|n\in \Nbb^*)$ of separations 
corresponding to edges of the decomposition tree such that $(A_n,B_n)\leq 
(A_{n+1},B_{n+1})$ and $v_n$ is contained in $B_n\sm A_n$. 
 
We start the construction by picking an arbitrary separation $(C,D)$ corresponding to an edge 
of the decomposition tree such that it distinguishes two thin vertex-ends. We pick for $v_0$ the 
root of the tree $T_\omega$. By replacing the separation $(C,D)$ by its reverse $(D,C)$ 
if necessary, we may assume that the thin vertex-end 
sitting above $v_0$ lives in the side $C$. We let $(A_1,B_1)=(C,D)$. 
Let $\mu$ be a thin vertex-end living in $B_1$ and let $u$ be the vertex of 
$T_\omega$ above which $\mu$ sits. The vertex $u$ must be contained in the side $B_1$ and have all 
but finitely many of its upward-neighbours in the side $B_1$. Since the separator $A_1\cap B_1$ is 
finite, the 
vertex $u$ has an upward-neighbour $v_1$ in the rooted tree $T_\omega$ that is contained in $B_1\sm 
A_1$. We let $P_1$ be the unique path included in the tree $T_\omega$ from the vertex $v_0$ to the 
vertex $v_1$. 
 
Now assume that we already constructed a path $P_n$ and a separation $(A_n,B_n)$ corresponding to 
an edge of the decomposition tree such that the last vertex $v_n$ of $P_n$ is contained in $B_n\sm 
A_n$ and is a vertex of $T_\omega$. Next we construct the path $P_{n+1}$ and the separation 
$(A_{n+1},B_{n+1})$.  
As the separator $A_n\cap B_n$ is finite, the vertex $v_n$ has two upward-neighbours $u$ and $u'$ 
in 
the rooted tree $T_\omega$ contained in $B_n\sm A_n$. By assumption there is a separation $(C,D)$ 
corresponding to an edge of the decomposition tree such that the thin vertex-ends sitting above $u$ 
and $u'$ are distinguished by $(C,D)$. 

\begin{sublem}\label{cases_nestedness_now}
The separation $(A_n,B_n)$ is $\leq$ to the separation $(C,D)$ or its reverse $(D,C)$.
\end{sublem}

\begin{proof}
This is a simple consequence of the fact that the separations $(C,D)$ and $(A_n,B_n)$ are nested as 
separations corresponding to edges of a decomposition tree of the same tree-decomposition. 

The sides $C$ and $B_n$ both contain all but finitely many vertices of every ray belonging to the 
vertex-end sitting above the vertex $u$. Hence the intersection $C\cap B_n$ is infinite. Similarly, 
we conclude that the intersection $D\cap B_n$ is infinite. As the separator $C\cap D$ is finite, 
the side $B_n$ cannot be included in one of the sides $C$ or $D$. Hence as the separations $(C,D)$ 
and $(A_n,B_n)$ are nested, it must be that the separation $(A_n,B_n)$ is $\leq$ to the separation 
$(C,D)$ or its reverse $(D,C)$.
\end{proof}

By replacing the separation $(C,D)$ by its reverse $(D,C)$ if necessary we may assume by 
\autoref{cases_nestedness_now} that $(A_n,B_n)\leq (C,D)$. We let $(A_{n+1}, 
B_{n+1})=(C,D)$. 
Since the separator $A_{n+1}\cap B_{n+1}$ is finite and the thin vertex-end sitting above $u'$ 
lives 
in $B_{n+1}$, the vertex $u'$ has an upward-neighbour $v_{n+1}$ in 
the rooted tree $T_\omega$ contained in $B_{n+1}\sm A_{n+1}$. We obtain the path $P_{n+1}$ from 
$P_n$ by adding the unique path included in $T_\omega$ from the vertex $v_n$ to the vertex 
$v_{n+1}$. 

This completes the construction of the paths $P_n$ and the separations $(A_n,B_n)$. Hence by 
recursion, there is a sequence $(v_n|n\in \Nbb)$ of vertices that lie on a ray $S$ of the tree 
$T_\omega$ starting at the root together with a sequence $((A_n,B_n)|n\in \Nbb^*)$ of separations 
corresponding to edges of the decomposition tree such that $(A_n,B_n)\leq 
(A_{n+1},B_{n+1})$ and $v_n$ is contained 
in $B_n\sm A_n$. The vertex-end $\mu$ to which the ray $S$ belongs is an end of the 
tree $T_\omega$; and thus is fat in the graph $G$. Since the ray $S$ contains infinitely many 
vertices of all sides $B_n$, its vertex-end $\mu$ lives in all sides $B_n$. The edges 
corresponding to the separations $(A_n,B_n)$ lie on a ray $R$ of the decomposition tree; and the 
vertex-end $\mu$ lives in the end of $R$. This completes the proof. 
\end{proof}

\autoref{in_end} contradicts \autoref{fat}. This is the desired contradiction. Hence $G$ has no 
tree-decomposition of finite adhesion that distinguishes all its thin vertex-ends.
\end{eg}

\begin{eg}\label{not_efficiently}
In this example, we construct a graph $G$ such that for any of its tree-decompositions $(T,P_t| 
t\in 
V(T))$ there are two topological ends such that no separation 
corresponding to an edge of $T$ distinguishes them efficiently\footnote{Topological ends are 
examples of vertex-ends. In this sense the term `distinguishes efficiently' is defined.}.

We start the construction with the (cartesian) product\footnote{Given two 
graphs $G$ and $H$, by $G\times H$, we denote the graph with vertex set $V(G)\times V(H)$ 
where we join two vertices $(g,h)$ and $(g',h')$ by an edge if both $g=g'$ and $hh'\in E(G)$ or 
both 
$h=h'$ and $gg'\in E(G)$.} $W$ of a 
ray with the path of five vertices, see \autoref{not_eff_fig}.    \begin{figure} [htpb]   
\begin{center}
   	  \includegraphics[height=4cm]{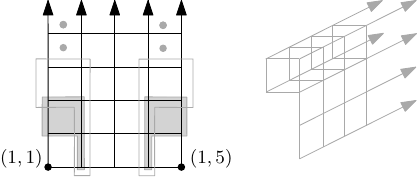}
   	  \caption{The construction of the graph $G$. It is obtained from the graph depicted on the 
left by attaching on each set of vertices surrounded by a `P'-shaped box a graph like 
the one on 
the right of the appropriate size.}\label{not_eff_fig}
\end{center}
   \end{figure}
   By $P[n]$, we denote a graph that has the shape of a `P'. More precisely, it is obtained from  
a path of $n$ vertices by adding an edge such that one endvertex of the edge is joined to the last 
vertex of the path and the other endvertex to the second but last. By $H_n$ we denote the product 
of a ray with the graph $P[n]$. 
We obtain $G$ from $W$ by for each $n\geq 3$ attaching two copies of $H_n$ as follows. We 
attach these new graphs $H_n$ on copies of $P[n]$. The first copy is that containing 
the initial path of the ray of length $n$ times the second vertex of the five-path together with 
the edge whose endvertices are the $n$-th and $(n-1)$-st vertex of the ray times the first vertex 
of the five-path. The second copy is that containing 
the initial path of the ray of length $n$ times the forth vertex of the five-path together with 
the 
edge whose endvertices are the $n$-th and $(n-1)$-st vertex of the ray times the fifth vertex of 
the five-path. In \autoref{not_eff_fig} 
these 
attachment sets are surrounded by grey `P'-shaped boxes. This completes the construction of $G$. 

The vertex-ends of the attached graphs $H_n$ are clearly topological. 
The graph $G$ has the property that although we attach the graphs $H_n$ at a copy of 
$P[n]$, the 
vertex-end of 
a new graph $H_n$ can be separated from the vertex-end of the other copy of $H_n$ by a separator 
properly contained in the attachment set $P[n]$; namely just those vertices in the attachment set 
that in $W$ have 
a neighbourhood in the infinite component of $W$ without the attachment set. The set of these 
vertices has the shape of 
an `$L$' turned around and consists of $n+1$ vertices. We denote these separators by $S_n^1$ and 
$S_n^2$, depending on whether they are contained in the first or second attachment set 
$P[n]$, respectively.

It is straightforward to check that any separation separating the two vertex-ends of the two 
attached 
copies of $H_n$ efficiently has the separating set $S_n^1$ or $S_n^2$.

Suppose for a contradiction that $G$ has a tree-decomposition $(T,P_t|t\in V(T))$ that separates 
any two topological ends efficiently. Then infinitely many of its separations must 
have separating sets of the form $S_n^1$ or $S_n^2$. By symmetry we may assume that there are 
infinitely many of the form $S_n^1$. 

By $(1,1)$ we denote the vertex of $G$ that is the product of the first vertex of the five-path and 
first 
vertex of the ray, see \autoref{not_eff_fig}. Similarly, by $(1,5)$ we denote the vertex of $G$ 
that is the product of the 
last vertex of five-path and first 
vertex of the ray.  Let $P_a$ be a part of the tree-decomposition that contains $(1,1)$ and 
similarly let $P_b$ be a part of the tree-decomposition that contains $(1,5)$. 
The edges corresponding to the separations with separators of the form $S_n^1$ 
separate in $T$ the vertex $a$ from the vertex $b$; that is, they lie on the unique $a$-$b$-path. 
Since this path is finite, we derive the desired contradiction.
Thus $G$ has  no tree-decomposition $(T,P_t|t\in V(T))$ such that for any two 
topological ends there is a separation 
corresponding to an edge of $T$ distinguishes them efficiently.

We remark that all topological ends of $G$ are thin vertex-ends and so this construction also shows 
that thin vertex-ends cannot always be distinguished efficiently.
\end{eg}

\section{Separations and tangles}\label{prelims2}

In this section, we define tangles and related concepts and prove some intermediate lemmas that we 
will apply in \autoref{nested_sets}. 

\subsection{Tangles}

Tangles are a central concept in Graph Minor Theory that describe highly connected substructures 
of a graph such as complete subgraphs or grid minors. 
They do not explicitly describe these substructures. Instead, for every low order separation they 
point towards a side, where that substructure `lives'. This side is called the \emph{big} 
side and the other side of the separation is the \emph{small} side. These assignments have to 
satisfy certain rules such as sides including big sides are big. 

Formally, a \emph{tangle of order $k+1$} assigns to each separation of order\footnote{We follow 
the convention that we allow $k+1$ to be infinite. In that case we just replace `of order at 
most 
$k$' by `of finite order' in the above definition. } at most $k$ a 
big side. The other side is called small. These assignments satisfy the following properties:
\begin{enumerate}
 \item three small sides $A_1$, $A_2$, $A_3$ cannot cover all edges, \newline in 
formulas: $G\neq G[A_1]\cup G[A_2]\cup G[A_3]$; 
\item if $X$ is a set of at most $k$ vertices, 
there is a component $C$ of $G-X$ such that 
$C\cup X$ is the big side of the separation $(C\cup X, G\sm C)$.
\end{enumerate}

From the first property it follows that if $(A,B)$ is a separation of order at most $k$ and $A\se 
B$, then $A$ is small and $B$ is big in any tangle of order $k+1$.
In particular, the empty set $\emptyset$ is the  small side of $(\emptyset, G)$.
Furthermore every separation of order at most $k$ has  
precisely one big side in a tangle of order $k+1$ by the first property. 
And a side including a big side cannot be small. Thus if a side is a big side of some 
separation, it must be the big side of any separation it is a side of. Thus we shall say 
things like 
`$A$ is big' without specifying a separation $(A,B)$ of which $A$ is the big side. 

\begin{rem}
 In the standard definition of tangles for finite graphs (or more generally for locally finite 
graphs), the second property is omitted. The reason is that for finite graphs there is a simple 
well-known argument that it follows from the first. This argument relies on an induction on the 
number of components of $G-X$ and this implication no longer holds for quite simple infinite graphs 
like the infinite star (in fact without the second conditions non-principle ultra-filters on the 
leaves of the infinite star would give rise to a tangle of infinite order). It is not the scope 
of this 
paper to analyse such objects.\footnote{Tangles without this second condition are studied in 
\cite{diestel_ultrafilter-tangles} by Diestel.} Hence we require this second condition. 

We refer to this second condition as the \emph{component property}.
\end{rem}

In this paper 
we are mostly interested in the following examples of tangles.

\begin{eg}\label{end_is_tangle}
 Each vertex-end $\omega$ induces a tangle; indeed, for a finite order separation $(A,B)$ we define 
$A$ to be big in this tangle if $\omega$ lives in $A$. It is straightforward to check that this 
defines a tangle of infinite order.
\end{eg}

Given two separations $(A,B)$ and $(C,D)$, the separation $(A\cap C, B\cup D)$ is called the 
\emph{corner separation} at the \emph{corner} $A\cap C$, see \autoref{fig:corner}. 
\begin{figure}
\begin{center}
   	  \includegraphics[height=3cm]{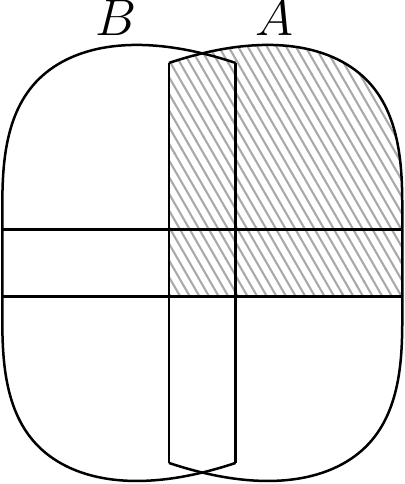}
   	  \caption{The corner diagram for the two separations $(A,B)$ and 
$(C,D)$. The separation $(A,B)$ separates vertically, while $(C,D)$ separates 
horizontally. The corner $A\cap C$ is shaded in grey. The middle region $A\cap B\cap C\cap D$ is 
called the \emph{center}. The four other regions `linking the corners' are called 
the \emph{links}. Formally, they are $(A\cap B)\sm C$,  $(A\cap B)\sm D$, $(C\cap D)\sm 
A$ and $(C\cap D)\sm B$. }\label{fig:corner}
\end{center}
   \end{figure}
In \autoref{fig:corner} the separator of $(A\cap C, B\cup D)$ has the shape of an `L'. Hence we 
denote this separator by $L(A,C)$; formally, $L(A,C)$ is the intersection of $A\cap C$ and $B\cup 
D$. The pair consisting of $(A,B)$ and $(C,D)$ has three more corner separations, corresponding to 
the corners of \autoref{fig:corner}. These are $(A\cap D, B\cup C)$, $(B\cap C, A\cup D)$ and 
$(B\cap D, A\cup C)$. Analogously to $L(A,C)$ we define the separators $L(A,D)$, $L(B,C)$ and 
$L(B,D)$. 

\begin{obs}\label{XXX}
 $|L(A,C)|+|L(B,D)|= |A\cap B|+|C\cap D|$.
\qed
\end{obs}

Given two separations $(A,B)$ and $(C,D)$ of order at most $k$ such that $L(A,C)$ contains at 
most $k$ vertices, then the corner separation $(A\cap C, B\cup D)$ has order at most $k$.
If additionally $P$ is a tangle of order $k+1$ such 
that $A$ and $C$ are big in $P$, then the side $A\cap C$ of $(A\cap C, B\cup D)$ is big in $P$; 
this follows from the first property of tangles as $B$, $D$ and $A\cap C$ cover all edges. 
We shall refer to that property of tangles as the \emph{corner property}. 

Another property of tangles of order $k+1$ that also follows from the property that no three small 
sides cover is that they are \emph{robust}\footnote{In the context of tangles and separations the 
term `robust' is used by different authors to mean different things that do not seem to be closely 
related. The notion we use was first defined in \cite{CDHH:profiles}. }; that is: given two 
separations $(A,B)$ and $(C,D)$, where the first separation has order at most $k$ and the second 
separation has arbitrary finite order such that the corner separators $L(A,C)$ and $L(B,C)$ have at 
most $k-1$ vertices. Then if the side $C$ is big, then one of the corners  $C\cap A$ or $C\cap B$ 
must be big.  

A separation $(A,B)$ \emph{distinguishes} two tangles $P_1$ and $P_2$ if the side 
$A$ is big in some $P_i$ and small in $P_{i+1}$. 
Note that $(A,B)$ distinguishes the $P_i$ if and only if $(B,A)$ distinguishes them. 
A separation distinguishes $P_1$ and $P_2$ 
\emph{efficiently} if 
it distinguishes them and has minimal order amongst all separations distinguishing them.

\subsection{Blocks and torsos}\label{blocks_and_seps}

Given a set $\Ncal$ of separations, an \emph{$\Ncal$-block} is a maximal set of vertices no two of 
which are separated\footnote{Two vertices $v_1$ and $v_2$ are \emph{separated} by a separation 
$(A,B)$ if some $v_i$ is in $A\sm B$ and $v_{i+1}$ is in $B\sm A$. } by a separation in $\Ncal$. 
For any $\Ncal$-block $\beta$, any separation in $\Ncal$ has (at least) one side that includes 
$\beta$. And $\beta$ can be written as an intersection of all these sides.

Let $\Ncal$ be a nested\footnote{A \emph{nested set} is a set of separations that are pairwise 
nested. A separation is \emph{nested} with a set if it is nested with every separation in that 
set. } set of separations of order at most $k$ and let $\beta$ be an 
$\Ncal$-block of at least $k+1$ vertices.  Let $P$ be a 
tangle of order $\ell+1$ greater than $k$. We say that the tangle $P$ \emph{lives} in the block 
$\beta$ 
if for every separation $(A,B)$ of $G$ of order at most $k$ with $\beta\se A$ the side $A$ is 
big in $P$. 

\begin{rem}
 Unlike for finite graphs, not every tangle of order $k+1$ of an infinite graphs lives in an 
$\Ncal$-block; 
indeed for tangles defined from ends the intersections of all big sides of separations in $\Ncal$ 
may be empty.  
\autoref{weird} shows that the definition of `lives in' cannot be weakened by replacing `$(A,B)$ of 
$G$' by `$(A,B)$ of $\Ncal$'. 
\end{rem}

\begin{eg}\label{weird}
In this example we construct a nested set of separations of order three such that the 
intersections of the big sides of the tangle forms a block of size four in which the tangle 
does not live.
 We obtain the graph $G$ from a ray by attaching vertices $v$ and $w$ complete to the ray and 
then attaching an edge complete to $v$ and $w$, see \autoref{fig:easy1}. 
\begin{figure}
\begin{center}
   	  \includegraphics[height=3cm]{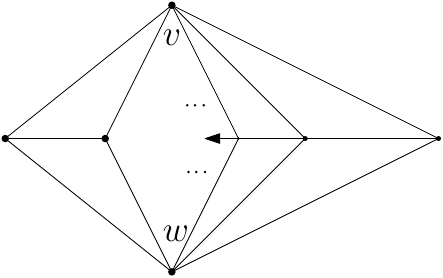}
   	  \caption{The graph $G$.}\label{fig:easy1}
\end{center}
   \end{figure}
The tangle we focus on is the tangle of the vertex-end of $G$. The set $\Ncal$ consists of 
those separations of the form $(P_n+v+w, G\sm P_n)$, where $P_n$ is 
the initial subpath of the ray of length $n$. The attached edge together with $v$ and $w$ is an 
$\Ncal$-block. This 
$\Ncal$-block is the intersection the big sides of separations in $\Ncal$. Still the tangle does not 
live in that block in the sense that it induces a tangle in that block in the sense of 
\autoref{induces_in_torso} below. 
\end{eg}

The next lemma gives a criterion when tangles do live in blocks.

\begin{lem}\label{extendable_lemma_rewritten}
 Let $\Ncal$ be a nested set of separations of order at most $k$ and $(A,B)$ a separation of order 
$\ell\geq k+1$ nested with $\Ncal$. Assume that $(A,B)$ distinguishes two tangles $P$ and $Q$ 
efficiently.
Then there is an $\Ncal$-block $\beta$ such that $A\cap B\se \beta$ 
and $P$ and $Q$ live in $\beta$. 
\end{lem}

\begin{proof}
Since the separation $(A,B)$ is nested with any separation in $\Ncal$, no such separation separates 
the 
vertex set $A\cap B$.
Note that $A\cap B$ contains at least $k+1$ vertices. 
Let $\beta$ be the unique $\Ncal$-block including $A\cap B$: as above $\beta$ is unique as each 
separation in $\Ncal$ has precisely one side containing $A\cap B$.

Next we show that $P$ and $Q$ live in $\beta$. For that let $(C,D)$ be a separation of order at 
most $k$ of $G$ with $\beta \se C$. Our aim is to show that $C$ is big in $P$ and $Q$. 

Suppose for a contradiction that the side $D$ is big in one of the tangles, say $P$. 
By symmetry, we may assume that the side $A$ is big in $P$. Since the link $(A\cap B)\sm C$ is 
empty, the corner separation $(A\cap D, B\cup C)$ has order at most $k$. As $P$ has the corner 
property, the corner $A\cap D$ is big in $P$. On the other hand, the side $B\cup C$ must be big in 
$Q$ as it includes the big side $B$. Hence the corner separation $(A\cap D, B\cup C)$ distinguishes 
$P$ and $Q$.
As this separation has order at most $k$, this is a contradiction to the efficiency of the 
separation $(A,B)$. Thus the side $C$ is big in both tangles $P$ and $Q$. 
\end{proof}

\begin{obs}\label{extendable_lemma_rewritten_2}
 In the proof of \autoref{extendable_lemma_rewritten} we do not make use of the whole 
strengths of the property of tangles that three small sides do not cover but just of 
the slightly weaker corner property. This will be used only once, namely in 
\autoref{extendable_lemma_rewritten_3}
\end{obs}

Given a set $\Ncal$ of separations and an $\Ncal$-block $\beta$, the \emph{torso} $G_T[\beta]$ of 
$\beta$ is obtained from $G[\beta]$ by adding 
an edge between any two vertices of $\beta$ that are in a common separator $A\cap B$ of some 
separation $(A,B)$ in $\Ncal$. 
This definition is compatible with the usual definition 
of torso \cite{DiestelBook10} in the context of tree-decompositions: if $\Ncal$ is the set of 
separations corresponding to the edges of a tree-decomposition, then the vertex set of every 
maximal 
part is an $\Ncal$-block and its torso is just the torso of that part. 
Moreover, we have the following.
\begin{lem}\label{torso_nice}
 Let $K$ be a component of $G-\beta$. Then any two vertices  $v$ and $w$ in the neighbourhood of 
$K$ in $\beta$ are adjacent in the torso $G_T[\beta]$. 
\end{lem}

\begin{proof}
 Let $P$ be a path between $v$ and $w$ whose interior vertices are in $K$. For each separation 
$(C,D)$ in $\Ncal$ its restriction to $P\cup \beta$ is $(C\cap (P\cup \beta), 
D\cap (P\cup \beta))$. 
By reversing separations in 
$\Ncal$ if necessary, we may assume that 
$\beta\se C'$ for every restriction $(C',D')$. 
Since nestedness is preserved by restricting, there is one such restriction $(C',D')$ such that 
$D'\sm C'$ includes all sets $D''\sm C''$ for all other such restrictions $(C'',D'')$. As no vertex 
of $P-v-w$ is in $\beta$ the set $D'\sm C'$ must be equal 
to $P-v-w$.
Hence $vw$ is an edge in the torso. 
\end{proof}

\begin{lem}\label{block_components_small}
 Let $\Ncal$ be a nested set of separations of order at most $k$ and let $\beta$ be an 
$\Ncal$-block. Then every component $C$ of $G-\beta$ has at most $k$ neighbours in $\beta$.
\end{lem}

\begin{proof}
This lemma is a well-known fact for finite graphs\footnote{Indeed, let $(A,B)$ be a separation in 
$\Ncal$ with a vertex $v$ of $C$ contained in the side of $(A,B)$ that does not include $\beta$ 
such that the side containing $v$ is inclusion-wise maximal. It is routine to check that the 
separator $A\cap B$ includes the neighbourhood of the component $C$. }.
We give an argument that reduces the infinite version to the finite version. 

Suppose for a contradiction that some component $C$ of $G-\beta$ has at least $k+1$ vertices in its 
neighbourhood. Then there is a finite connected subset $C'$ of $C$ that has a set $\beta'$ of $k+1$ 
vertices of $\beta$ included in its neighbourhood. We obtain the graph $G'$ from $G$ by deleting 
all vertices not in the finite vertex set $C'\cup \beta'$. The restrictions $(A',B')=(A\cap G', 
B\cap G')$ of separations $(A,B)$ in $\Ncal$ form a nested set of separations in $G'$. Hence we get 
the desired contradiction by the finite version of the lemma. This completes the proof.  
\end{proof}

\begin{rem}\label{why_profiles}
Tangles have many nice properties. However, they do not always induce tangles in blocks they live 
in, see \autoref{not_block} below. This property will be essential for our proof strategy later on. 
We will overcome that problem by working within the class of `robust profiles', a slight superclass 
of tangles.  

It should be noted that robust profiles unlike tangles do not always have the following property, 
which makes tangles work very well with graph minors: let $G'$ be a minor of $G$ and $T'$ be a 
tangle in $G'$, then there is a tangle in $G$ inducing $T'$.\footnote{Conversely, it can be shown 
that any profile in a graph $G'$ with the property that it induces a profile in any graph $G$ that 
has $G'$ as a minor is a tangle. } This last statement is not used 
in this paper. 
\end{rem}

\begin{eg}\label{not_block}
Consider the unique tangle of order $k+1$ on the complete bipartite graph 
$K_{k,k+1}$ for $k>3$. The separations of order $k$ are nested and the torso of the block in 
which the tangle lives is isomorphic to $K_k$. However, there is no tangle of order $k+1$ at $K_k$. 
(The largest tangle has order roughly $\frac{2}{3}\cdot k$.)  
\end{eg}

Robust profiles\footnote{Profiles were introduced in \cite{CDHH:profiles}.  In that 
paper `robust' is called `$\infty$-robust'. The results and proofs of this paper extend verbatim to 
`$r$-robust profiles' for any natural number $r$. The reader interested in such generalisation is 
refered to \cite{C:undom_td_v4}, an earlier version of this paper.}  will be defined like 
tangles except that we weaken the property that three small sides never cover; namely we just forbid 
this for very particular configurations. 
To be precise, we define \emph{robust profiles} like `tangles' except that we replace the 
first property that three small sides never cover all edges by the following three properties.
\begin{enumerate}
 \item no two small sides cover all edges;
 \item the corner property;
  \item the robustness property.
\end{enumerate}

\begin{eg}
 We have seen above that tangles are examples of robust profiles. A different example is the robust 
profile of order $k+1$ on the graph $K_k$.
\end{eg}

All definitions for tangles are extended to robust profiles in the obvious way. 
The proof of \autoref{induces_in_torso} is the only one in the paper where we make use of the 
difference between tangles and robust profiles (except from those implicit places where we apply 
\autoref{induces_in_torso}). This is necessary in order to cope with examples such as those in 
\autoref{not_block}. 

Next we define how a robust profile living in an $\Ncal$-block $\beta$ defines a robust profile in 
the torso 
graph $G_T[\beta]$. The \emph{restriction} of a separation $(A,B)$ of $G$ 
to $\beta$ is the 
separation $(A\cap \beta, B\cap \beta)$ of $G[\beta]$. 

\begin{lem}\label{restriction_well_def}
 Given an $\Ncal$-block $\beta$ and a separation $(A,B)$ nested with $\Ncal$, the restriction of 
$(A,B)$ is a separation in the torso graph $G_T[\beta]$.
\end{lem}

\begin{proof}
It suffices to show that for any separation $(C,D)\in \Ncal$ that $C\cap D$ is a subset of $A$ or 
$B$. This follows from the nestedness of $(A,B)$ with $(C,D)$.  
\end{proof}

For any separation $(A',B')$ of a torso graph $G_T[\beta]$, there is a separation $(A,B)$ of $G$ 
that restricts to $(A',B')$ and has the same separator. 
Now let $P$ be a robust profile of order $\ell+1 > k$ that lives in an $\Ncal$-block $\beta$.
The \emph{induced robust profile} $P_\beta$ of $P$ at $\beta$ is defined as follows.
A side $A'$ of a separation $(A',B')$ of the torso graph $G_T[\beta]$ of order at most $\ell$ is 
\emph{big in $P_\beta$} if and only if there is a side $A$ of a separation $(A,B)$ of 
$G$ that restricts to $(A',B')$ and has the same separator such that $A$ is big in $P$.  

\begin{lem}\label{induces_in_torso}
 Assume that a robust profile $P$ of order $\ell+1>k$ lives in the $\Ncal$-block $\beta$. Then the 
induced 
robust profile $P_\beta$ is a robust profile of the torso $G_T[\beta]$. 
\end{lem}

\begin{proof}

First we show that if $(A',B')$ is a separation of order at most $k$ of the torso, then it can 
have at most one big side in $P_\beta$. 

By the component property, there is a component $K$ of the graph $G-A'\cap B'$ such that 
the side $K\cup(A'\cap B')$ is big in $P$. As $P$ lives in $\beta$ by assumption, 
the block $\beta$ is included in that side. As $\beta$ has at least $k+1$ vertices,  
the component $K$ contains a vertex of the block $\beta$. 
That is, the vertex set $K'=K\cap \beta$ is not empty. 

As $K'$ is a restriction of a connected set, it is connected in the torso by \autoref{torso_nice}. 
As the vertex set $K'$ 
is disjoint from the separator $A'\cap B'$, there is a unique side of the separation $(A',B')$ that 
includes $K'$, say $A'$. Now let $(A,B)$ be any separation of $G$ that restricts to $(A',B')$ and 
has the separator $A'\cap B'$. Since $K$ includes $K'$, the set $K$ cannot be included in $B$. So 
it is included in $A$. So $A$ must be big in $P$ as it includes a big side. In particular 
$B$ is small in $P$. Since $(A,B)$ is 
arbitrary, $B'$ cannot be big.

To see that $P_\beta$ has the component property, let $X$ be a set of at most $\ell$ 
vertices of the torso.
Let $K$ be a component of $G-X$ such that the side $(K\cup X)$ of $(K\cup X, G\sm K)$ is big in 
$P$. Let $K'=K\cap \beta$, which is connected in the torso by \autoref{torso_nice}. 

Next we show that $K'$ is not empty. 
Suppose not for a contradiction. Then the component $K$ contains no vertex of the block 
$\beta$. So $K$ is a component of $G-\beta$. By \autoref{block_components_small}, the component $K$ 
has at most $k$ neighbours in the block $\beta$. So the separation $(K\cup N(K), G\sm K)$ has order 
at most $k$. As $P$ lives in $\beta$, the side $G\sm K$ is big in $P$. This is a contradiction to 
the fact that $P$ is a robust profile as the side $K\cup N(K)$ is big in $P$. Hence the set $K'$ 
must be 
nonempty. 

Let $K''$ be the component of the torso $G_T[\beta]$ without $X$ including the connected 
nonempty set $K'$. Then $K''\cup X$ is big in the induced robust profile. So $P_\beta$  
has the component property.

It remains to show that two small sides in the torso do not cover and to show the corner property 
and robustness for $P_\beta$. 
To see the first, suppose for a contradiction that the torso is covered by two small sides. We 
observe that if the edge set of a complete graph is covered by two subgraphs, then one of 
these subgraphs must include the whole vertex set of the complete graph.\footnote{This 
observation is no longer true if we consider covers by three subgraphs instead; and is the 
reason why this proof does not work for tangles (which is not surprising in view of 
\autoref{not_block}).} Hence by \autoref{torso_nice} for each component $K$ of $G$ without the 
torso, there is one of the sides that includes the whole neighbourhood of $K$.
So we can assign each component $K$ to a side that includes its neighbourhood. Each of the two 
covering sides together with its assigned components forms a side of a separation of order at most 
$\ell$ in the graph $G$. As this side restricts to a small side in the torso, it must be small in 
the original robust profile $P$ by definition of the induced robust profile and by the first part 
of the proof. Hence the graph $G$ is covered by two sides that are small in $P$, which is not 
possible as $P$ is a robust profile $P$.  

Having shown that two small sides cannot cover in the torso, it remains to verify the the corner 
property and robustness for $P_\beta$. In a nutshell, they are both true as taking the corner 
separation commutes with taking the torso. In detail, let $(A',B')$ and $(C',D')$ be two 
separations of the torso of order at most $\ell$ and assume that their corner separator $L(A',C')$ 
contains at most $\ell$ vertices. Then there are separations $(A,B)$ and $(C,D)$ of $G$ that 
restrict to $(A',B')$ and $(C',D')$ and have the same separator; in particular, all vertices 
of the separators $A\cap B$ and $C\cap D$ are vertices of the torso. Hence the corner separator 
$L(A,C)$ is equal to the corner separator  $L(A',C')$. So the corner property for $P_\beta$ follows 
from the corner property for $P$. Similarly robustness for $P_\beta$ follows from robustness for 
$P$. So $P_\beta$ is a robust profile of the torso. 
\end{proof}

\begin{obs}\label{extendable_lemma_rewritten_3}
 \autoref{extendable_lemma_rewritten} is true with `tangle' replaced by `robust profile'. 
\end{obs}

\begin{proof}
 This follows from \autoref{extendable_lemma_rewritten_2}.
\end{proof}

\subsection{Extending separations of the torsos}\label{torsos_to_G}

The aim of this subsection is to explain how for a given nested set $\Ncal$ of separations and 
a torso of an $\Ncal$-block, a nested set of separations of the torso can be extended to a nested 
set of separations of the whole graph that is nested with $\Ncal$. This is more technical 
and hence more complicated as one might expect. Indeed, extending a single separation of the torso 
is quite easy -- but it is not uniquely defined. We have to make some choices. If we make these 
choices arbitrarily for two nested separations, it could happen that their extensions are no longer 
nested, see \autoref{no_longer_nested} below. 

Throughout this subsection we fix a nested set $\Ncal$ of separations and an $\Ncal$-block $\beta$. 
For each separation $(C,D)\in \Ncal$ at least one of the sides $C$ and $D$ includes $\beta$. 
Let $\Ncal_\beta$ consist of those separations $(C,D)$ such that $\beta$ is included in $C$ and 
$(C,D)$ or $(D,C)$ is in $\Ncal$. 

Given a separation $(A,B)$ of the torso $G_T[\beta]$, one way to `extend' $(A,B)$ to a separation 
of $G$ is to decide for each component of $G-\beta$, whether we put it on the $A$-side or on the 
$B$-side. Below we define what it means when such a component is `forced'. Informally, it is forced 
when we must put it on the $A$-side in order to extend $(A,B)$ to a separation of $G$. 

A component $K$ of $G-\beta$ is \emph{forced at step $1$} by $(A,B)$ if one of 
its vertices has a neighbour in $A\sm B$.
A separation $(C,D)\in \Ncal_\beta$ is \emph{forced at step $2n+2$} if there is a component $K$ 
forced at step $2n+1$ that contains a vertex of $D\sm C$.
A component $K$ of $G-\beta$ is \emph{forced at step $2n+1$} for $n>0$ if there is a separation 
$(C,D)\in \Ncal_\beta$ forced at step $2n$ so that $K$ contains a vertex of $D\sm C$. 
An alternative definition of `forcing' is the following. 
\begin{eg}
 We define the bipartite graph whose left side are the components of $G-\beta$ and whose right side 
are the separations in $\Ncal_\beta$. We add an edge between a component $K$ and a separation 
$(C,D)$ if $K$ contains a vertex of $D\sm C$. A component (or separation) is forced if and only if 
its connected component in this bipartite graph contains a component forced at step one. We will 
not use the fact that this definition is equivalent in our proofs. 
\end{eg}

The following lemma implies that if a component is forced at some step, it is forced at step one or 
three; and if a separation is forced, it is forced at step two or four.

\begin{lem}\label{forces}
 Let $(C,D)$ be a separation in $\Ncal_\beta$ forced by $(A,B)$. 
 There is some $(C',D')$ in $\Ncal_\beta$ forced by $(A,B)$ with $D\sm C\se D'\sm C'$ such 
that 
some vertex of $C'\cap D'$ is in $A\sm B$. 
\end{lem}

\begin{proof}
Let $2n$ be the smallest step at which $(C,D)$ is forced. 
We prove \autoref{forces} by induction on $n$. 

The base case is that $2n=2$. Let $K$ be a component forced at step one `forcing' $(C,D)$; 
here we say that $K$ \emph{forces} the separation $(C,D)$ if there is a vertex of $K$ in $D\sm C$ 
and $(C,D)$ is not forced at an earlier step than $K$. 

As $K$ is forced at step one, there is a vertex $v$ of $K$ that has a neighbour $w$ in $A\sm B$.
As $v$ is not in $\beta$, there is a separation $(E, F)$ in $\Ncal_\beta$ such that $v$ is in 
$F\sm E$. As $w$ is in $\beta$, it is in $E$. As it has a neighbour in $F\sm E$, it also must 
be in $F$. 

We call a separation $(E',F')$ a \emph{candidate} if the separator $E'\cap F'$ contains a vertex of 
$A\sm B$ and $F'\sm E'$ contains a vertex of the component $K$. 
For example, the separation $(E,F)$ is a candidate. 
To conclude the base case, we show 
the following. 

\begin{sublem}\label{sublem_new}\label{nested_calc}
 Assume that there is a candidate. Then there is a separation $(C',D')$ in $\Ncal_\beta$ 
forced by $(A,B)$ with $D\sm C\se D'\sm C'$ such that 
some vertex of $C'\cap D'$ is in $A\sm B$. 
\end{sublem}

\begin{proof}
We pick a candidate $(E,F)$. Let $w$ be a vertex of the separator $E\cap F$ in $A\sm B$. 
If the vertex $w$ was in the separator $C\cap D$, the lemma would be true with `$(C,D)$' in place 
of `$(C',D')$'.
Hence we may assume that the vertex $w$ of $\beta$ is not in the side $D$ as $\beta\se C$. 
So the vertex $w$ is in the link $(E\cap F) \sm D$. 
So from the nestedness of $(C,D)$ and 
$(E,F)$ it follows that either  $D\sm C\se F\sm E$ or else  $D\sm C$ and  $F\sm E$ are 
vertex-disjoint. 

Our aim is to construct a candidate $(E,F)$ that satisfies the first condition $D\sm C\se F\sm E$.
Let $u$ and $v$ be vertices of $K$ that are in $D\sm C$ and $F\sm E$, respectively (such vertices 
exist as we may assume that $D\sm C$ is not empty and $(E,F)$ is a candidate).. 
Let $P$ be a path from the vertex $u$ to 
vertex $v$ included in the component $K$ of $G-\beta$. By assumption for every vertex $x$ on $P$, 
there is a separation $(C_x,D_x)\in \Ncal_\beta$ with $x\in D_x\sm C_x$. 
If possible we choose the separation $(C_x,D_x)$ such that the vertex $w$ is in the separator (in 
that case it is a candidate). 

Our goal is to show that it is possible to choose the separation  $(C_u,D_u)$ at $u$ such 
that the vertex $w$ is in the separator. 
Indeed, then we can use the nestedness of $(C_u,D_u)$ and $(C,D)$ to deduce as above that $D\sm 
C\se D_u\sm C_u$ or else  $D\sm C$ and  $D_u\sm C_u$. However, here the second outcome is not 
possible as the vertex $u$ is in the intersection of these two sets. 

Suppose for a contradiction that such a choice for $(C_u,D_u)$ is not possible. Let $x$ be the 
vertex  on the path $P$ nearest to $u$ 
such that the vertex $w$ is in its separator $C_x\cap D_x$. Let $y$ be the neighbour of $x$ on $P$ 
nearer to $u$, which exists as $x\neq u$. Then the vertex $w$ is in the link $(C_x\cap D_x)\sm 
D_y$. As above we deduce from 
the nestedness of the separations $(C_x,D_x)$ and $(C_y,D_y)$, that either $D_y\sm C_y\se D_x\sm 
C_x$ or else  $D_y\sm C_y$ and  $D_x\sm C_x$ are 
vertex-disjoint. Since we cannot choose $(C_x,D_x)$ in place of $(C_y,D_y)$, the first outcome 
is impossible. The second outcome is not possible either as the 
vertex $x\in D_x\sm C_x$ is adjacent to the vertex $y\in D_y\sm C_y$. So this is the desired 
contradiction. Hence we can choose $(C_u,D_u)$ such that it is a candidate, which completes the 
proof as shown above. 
\end{proof}

So the base case follows from \autoref{sublem_new} and the fact that $(E,F)$ is a candidate. 

Now let $n>1$ and assume that we already proved \autoref{forces} for separations $(E,F)$ forced 
at some step before $2n$. 
Let $(C,D)$ be a separation in $\Ncal_\beta$ forced at step $2n$. Let $K$ force $(C,D)$. 
Let $(C_1,D_1)$ be a separation forcing $K$, which 
exists as $n>1$. By the induction hypothesis, there is a separation $(C_2,D_2)$ in $\Ncal_\beta$ 
forced by $(A,B)$ with $D_1\sm C_1\se D_2\sm C_2$ such that some 
vertex $w$ of $C_2\cap D_2$ is in $A\sm B$. As $D_1\sm C_1\se D_2\sm D_2$, there is a vertex $v$ of 
$K$ that is in $D_2\sm C_2$. So $(C_2,D_2)$ is a candidate. So the induction step follows from 
\autoref{sublem_new}. This completes the proof. 
\end{proof}

\begin{lem}\label{C-forced_compatible}
 For any separation $(A,B)$ of the torso, no component 
$K$ of $G-\beta$ is forced by  both $(A,B)$ and  $(B,A)$.
\end{lem}

\begin{proof}
As any component of $G-\beta$ forces some separation in $\Ncal_\beta$, it suffices to show 
that no separation  $(C,D)$ in $\Ncal_\beta$ is forced by both $(A,B)$ and $(B,A)$. 
Suppose for a contradiction that there is such a separation $(C,D)$. 

By \autoref{forces}, there is a separation $(C',D')\in \Ncal_\beta$ forced by $(A,B)$ with 
$D\sm C\se D'\sm C'$ such that some 
vertex $v$ of $C'\cap D'$ is in $A\sm B$. As $D'\sm C'$ is a superset of $D\sm C$, the separation 
$(C',D')$ is also forced by $(B,A)$. Applying \autoref{forces} to $(C',D')$ and to $(B,A)$, yields 
a separation $(C'',D'')$ with $D'\sm C'\se D''\sm C''$ such that some 
vertex $w$ of $C''\cap D''$ is in $B\sm A$. Since the vertex $v$ is in $\beta$, it must be in 
$C''$. As $D''$ includes $D'$, it also is in $D''$. In short, $v$ is in the separator $C''\cap 
D''$. 

Hence the separation $(C'',D'')$ witnesses that $vw$ is an edge of the torso. As $v$ is in $A\sm 
B$ and $w$ is in $B\sm A$, we deduce that $(A,B)$ cannot be a separation of the torso. That is 
the desired contradiction.  
\end{proof}

\vspace{.3cm}

Having finished the proof of \autoref{C-forced_compatible}, we now define naive extensions of 
separations of the torso, explain why they are not quite the object we need and define extensions 
of nested sets of separations of the torso. 

Given a separation $(A,B)$ of the torso, the side $\hat A$ is obtained from $A$ by adding all 
components $K$ of $G-\beta$ that are forced at some step. 
We obtain $\widehat{\hat B}$ from $B$ by adding all 
components $K$ that are not forced at any step. Note that $\widehat{\hat B}= B \cup 
(G\sm \hat 
A)$. 
We define the \emph{naive extension} of $(A,B)$, denoted by $\widehat{(A,B)}$, to be $(\hat A, 
\widehat{\hat B})$. This construction ensures that  $\widehat{(A,B)}$ is a separation of $G$ that 
restricts to $(A,B)$. 

\begin{rem}
We chose the notation `$\widehat{\hat B}$' instead of simply `$\hat B$' as the term 
$\hat A$ for the separation $(A,B)$ and the term `$\widehat{\hat A}$' for the separation $(B,A)$ 
need not a priori agree -- and in fact they do not agree for $(A,B)$ defined as in 
\autoref{no_longer_nested}. 

In particular, the reverse separation of $\widehat{(A,B)}$ is in general not equal to 
$\widehat{(B,A)}$. 
\end{rem}

\begin{obs}\label{monotone}
Given two separations $(A,B)$ and $(X,Y)$ of the torso, if $(A,B)\leq (X,Y)$, then 
$ \widehat{(A,B)}\leq \widehat{(X,Y)}$. 
\qed
\end{obs}

\begin{obs}\label{Y_hat_nested}
Let $(A,B)$ be a separation of the torso and 
$(C,D)\in \Ncal_\beta$ be a proper separation. Then $ \widehat{(A,B)}$ or its reverse separation 
$(\widehat{\hat 
B},\hat A)$ is $\leq (C,D)$.

In particular, $ \widehat{(A,B)}$ is nested with every proper separation of $\Ncal$. 
\end{obs}

\begin{proof}
First assume that the separation $(C,D)$ is forced at some step. Then $D\sm C$ is a subset of 
$\hat A$. Since the separator $A\cap B$ of the separation $(\widehat{\hat 
B},\hat A)$ is a subset of the block $\beta$, which is included in the side $C$, we conclude that 
$D\sm C$ is a subset of 
$\hat A\sm \widehat{\hat B}$. As in the proof of \autoref{proper_nested} one combines this with the 
assumption that the separation $(C,D)$ is proper to deduce that $(D,C)\leq  \widehat{(A,B)}$. 

Hence it remains to consider the case that the separation $(C,D)$ is not forced at any step. 
Analoguously as above, one shows that $(D,C)\leq (\widehat{\hat 
B},\hat A)$ in that case. So $ \widehat{(A,B)}$ or its reverse separation 
$(\widehat{\hat 
B},\hat A)$ is $\leq (C,D)$.

\end{proof}

\autoref{no_longer_nested} gives an example of nested separations $(A,B)$ and $(C,D)$ of the torso 
$G_T[\beta]$ whose naive extensions $\widehat{(A,B)}$ and $ \widehat{(C,D)}$ are not nested. 

\begin{eg}\label{no_longer_nested}
Let $G$ be the labelled graph depicted in \autoref{fig:naive}. The set $\Ncal$ consists of the 
separation of order one and its reverse. Then the torso is $G-6$. We define $A=\{1,2,3,5\}$, 
$B=\{3,4,5\}$, $C=\{2,3,4,5\}$, $D=\{1,2,5\}$.
Then $(A,B)$ and $(C,D)$ are nested but not $\widehat{(A,B)}$ and $ \widehat{(C,D)}$.
 \begin{figure}
\begin{center}
   	  \includegraphics[height=3cm]{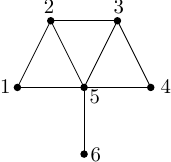}
   	  \caption{The graph $G$ is obtained by from a triangle by attach two 
more triangles at distinct edges and by then attaching a leaf at the unique 
vertex of degree four.}\label{fig:naive}
\end{center}
\end{figure}
\end{eg}
Examples like \autoref{no_longer_nested} motivate the slightly technical definition of 
$\widetilde{ \Lcal}$ below.
Given a nested set $\Lcal$ of separations of $G_T[\beta]$, the \emph{extension} $\widetilde{ 
\Lcal}$ of $\Lcal$ (depending on a well-order $( {(A_\alpha, B_\alpha)}\mid \alpha\in \kappa)$ of 
$\Lcal$) is the set $\{\widetilde {(A_\alpha, B_\alpha)} \mid (A_\alpha, B_\alpha)\in \Lcal\}$, 
where the 
\emph{extension} $\widetilde {(A,B)} $ of $(A,B)$ is defined as follows:
for the smallest element $(A_0,B_0)$ of the well-order, we just let 
$\widetilde {(A_0,B_0)}= \widehat{(A_0,B_0)}$.

Assume that we already defined $\widetilde {(A_\alpha, B_\alpha)}$ for all 
$\alpha<\gamma$. 
A component $K$ of $G-\beta$ is \emph{$\gamma$-forced} if there is some $\alpha<\gamma$
such that $K$ is a subset of $\tilde B_\alpha$ and $(B_\alpha, A_\alpha)\leq (A_\gamma, B_\gamma)$.
We obtain $\tilde A_\gamma$ from $A_\gamma$ by adding all components $K$ of $G-\beta$ that are 
forced by $(A_\gamma, B_\gamma)$ or are $\gamma$-forced.
We obtain $\tilde B_\gamma$ from $B_\gamma$ by adding all other components.
The extension $\widetilde{(A_\gamma,B_\gamma)}$ of $(A_\gamma,B_\gamma)$ is defined to be 
$(\tilde A_\gamma,\tilde B_\gamma)$. 

\begin{eg}
 The nested set $\Lcal=\{(A,B), (C,D)\}$ defined in \autoref{no_longer_nested} has different 
extensions $\widetilde \Lcal$ depending on which well-order we choose.
\end{eg}

\begin{obs}\label{welldef1}
 The extension $(\tilde A_\gamma,\tilde B_\gamma)$ is a separation.
\end{obs}

\begin{proof}
It suffices to show that any component $K$ of $G-\beta$ included in $\tilde A_\gamma$ is not forced 
by $(B_\gamma,A_\gamma)$. 
By \autoref{C-forced_compatible}, we may assume that $K$ is $\gamma$-forced. 
As any class of ordinals has a least element, there is some $\alpha$ minimal such that $K$ is a 
subset of $\tilde B_\alpha$ and $(B_\alpha, A_\alpha)\leq (A_\gamma, B_\gamma)$. In particular, $K$ 
is not forced by $(A_\alpha, B_\alpha)$. As $B_\gamma\sm A_\gamma$ is a subset of $A_\alpha\sm 
B_\alpha$, we deduce that 
$K$ cannot be forced by  $(B_\gamma,A_\gamma)$. 
\end{proof}

\begin{obs}\label{same_order}
Any separation $(A,B)$ of the torso has the same separator as its extension $\widetilde{(A,B)}$.\qed
\end{obs}

\begin{obs}\label{welldef2}
For any two separations $(A,B)$ and $(B,A)$ in $\Lcal$, the extension $\widetilde{(A,B)}$ is 
the reverse of 
$\widetilde{(B,A)}$.
\end{obs}

\begin{proof}
 We may assume that $(A,B)=(A_\alpha,B_\alpha)$ and $(B,A)=(A_\gamma,B_\gamma)$ for some 
$\alpha<\gamma$. It suffices to show that $\tilde B_\alpha= \tilde A_\gamma$. 
By construction $\tilde B_\alpha\se \tilde A_\gamma$. 
Suppose for a contradiction that $\tilde B_\alpha$ is a proper subset of $\tilde A_\gamma$.
Then there is a component $K$ that is included in $\tilde A_\alpha$ and in $\tilde A_\gamma$. 
We split into four cases and derive a contradiction in each of them.

{\bf Case 1A:} $K$ is forced by $(A,B)$ and $(B,A)$. This is impossible by 
\autoref{C-forced_compatible}.

{\bf Case 1B:} $K$ is forced by $(A,B)$ and $\gamma$-forced. 
So there is some ordinal $\delta<\gamma$ such 
that $K$ is a 
subset of $\tilde B_\delta$ and $(B_\delta, A_\delta)\leq (A_\gamma, B_\gamma)$.
Then $(A,B)\leq (A_\delta, B_\delta)$. So $K$ is forced by $(A_\delta, B_\delta)$. So it cannot be 
a subset of $\tilde B_\delta$, a contradiction.

{\bf Case 2A:} $K$ is $\alpha$-forced  and forced by $(B,A)$. This case is analogue to Case 1B. 

{\bf Case 2B:} $K$ is $\alpha$-forced  and $\gamma$-forced.
So there is some ordinal $\delta<\alpha$ such 
that $K$ is a 
subset of $\tilde B_\delta$ and $(B_\delta, A_\delta)\leq (A_\alpha, B_\alpha)$; and there is 
some ordinal $\epsilon<\gamma$ such 
that $K$ is a 
subset of $\tilde B_\epsilon$ and $(B_\epsilon, A_\epsilon)\leq (A_\gamma, B_\gamma)$.
To summarise:
\[
 (B_\delta, A_\delta)\leq (A,B)\leq (A_\epsilon, B_\epsilon)
\]

If $\delta< \epsilon$, then the component $K$ is $\epsilon$-forced and hence not in 
$\tilde B_\epsilon$, which is impossible. Similarly, we also cannot have $\delta> \epsilon$.
So $\delta= \epsilon$. But then the component $K$ is included in the sides $B_\delta$ and 
$A_\delta\supseteq B_\epsilon$, which is the desired contradiction. 
\end{proof}

\begin{obs}\label{welldef3}
For any two separations $(A_\alpha,B_\alpha)$ and $(A_\gamma,B_\gamma)$ in the nested set $\Lcal$,
their extensions $\widetilde{(A_\alpha,B_\alpha)}$ and $\widetilde{(A_\gamma,B_\gamma)}$ are 
nested.
\end{obs}

\begin{proof}
By symmetry we may assume that $\alpha<\gamma$.
If $B_\alpha$ is a subset of $A_\gamma$, it 
follows immediately from the definitions that $\widetilde{(B_\alpha,A_\alpha)}\leq 
\widetilde{(A_\gamma,B_\gamma)}$. If $A_\alpha$ is a subset of $A_\gamma$, then 
$\widetilde{(A_\alpha,B_\alpha)}\leq \widetilde{(A_\gamma,B_\gamma)}$ by construction.

The other two cases can be deduced using \autoref{welldef2} as follows.
First assume that $(B_\gamma,A_\gamma)$ is not in $\Lcal$. Then we add that separation at the 
end of the well-order for $\Lcal$. Now we apply the above argument to $(A_\alpha,B_\alpha)$ and 
$(B_\gamma,A_\gamma)$. Hence $\widetilde{(A_\alpha,B_\alpha)}$ and 
$\widetilde{(B_\gamma,A_\gamma)}$ are nested. By \autoref{welldef2} also 
$\widetilde{(A_\alpha,B_\alpha)}$ and $\widetilde{(A_\gamma,B_\gamma)}$ are 
nested.

The same argument works if $(B_\gamma,A_\gamma)$ is in $\Lcal$ but in the well-order after position 
$\alpha$. If it is before $\alpha$, we replace $(A_\alpha,B_\alpha)$ or $(A_\gamma,B_\gamma)$ by 
their reverses if they appear before in the well-order and then do the above argument. This implies 
the desired result by \autoref{welldef2}. 
\end{proof}

\begin{obs}\label{welldef4}
For any separation $(A,B)\in \Lcal$, its extension $\widetilde{(A,B)}$ is nested with every proper 
separation in $\Ncal$. 
\end{obs}

\begin{proof} Let $(A,B)=(A_\gamma,B_\gamma)$.
We say that a separation $(C,D)$ of $\Ncal_\beta$ is \emph{$\gamma$-forced} if there is some 
component $K$ of $G-\beta$ that contains a vertex of $D\sm C$ and is $\gamma$-forced or forced by 
$(A_\gamma,B_\gamma)$. 

We claim that if a separation of $\Ncal_\beta$ 
is $\gamma$-forced, then every component of $G-\beta$ that contains a vertex of $D\sm C$ is 
$\gamma$-forced or forced by 
$(A_\gamma,B_\gamma)$. Indeed, if any such component is forced by a separation of the nested 
set $\Lcal$, then all of these components are. Hence this follows by transfinite induction on the 
well-order of $\Lcal$.

Using this, we can argue as in the proof of \autoref{Y_hat_nested}.
\end{proof}

\begin{lem}\label{about_different_blocks}
Let $\Ncal$ be a nested set of proper separations and let $\beta$ and $\gamma$ be distinct 
$\Ncal$-block.
Let $\Lcal_\beta$ and $\Lcal_\gamma$ be nested sets of separations of $G_T[\beta]$ and 
$G_T[\gamma]$, respectively.
Then $\widetilde{ \Lcal}_\beta$ is a set of nested separations. 
For any separations $(A,B)\in \Lcal_\beta$ and  $(C,D)\in \Lcal_\gamma$, their  
extensions $\widetilde{(A,B)}$ and $\widetilde {(C,D)} $ are 
nested.
Moreover, they are nested with every separation in $\Ncal$. 
\end{lem}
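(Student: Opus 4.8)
The statement to prove is \autoref{about_different_blocks}: given a nested set $\Ncal$ of separations, two distinct $\Ncal$-blocks $B$ and $D$, and nested sets $\Lcal_B$, $\Lcal_D$ of separations of the torsos $G_T[B]$ and $G_T[D]$, the extension $\tilde\Lcal_B$ is a nested set of separations, any $\tilde X$ with $X\in\Lcal_B$ is nested with any $\tilde Y$ with $Y\in\Lcal_D$, and all these extensions are nested with every separation in $\Ncal$. The last clause — nestedness with $\Ncal$ — is already delivered for each individual extension by \autoref{Y_hat_nested} (equivalently \autoref{summary}, since each $\tilde Y_\gamma$ is built only from edges forced by $Y_\gamma$ and from whole members of $\Mcal$ via the $\tilde Z_\alpha$'s, so $X\se\tilde Y_\gamma$ or $X\se(\tilde Y_\gamma)\ct$ for every $X\in\Mcal$, hence for every $X\in\Ncal$). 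So the work is the first two clauses.

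For the first clause, that $\tilde\Lcal_B$ is nested, I would argue as follows. Fix $Y_\gamma, Y_\delta\in\Lcal_B$ with, say, $\delta<\gamma$; since $\Lcal_B$ is nested one of the four inclusions among $Y_\delta,Y_\delta\ct,Y_\gamma,Y_\gamma\ct$ holds, i.e. some $Z_\delta\in\{Y_\delta,Y_\delta\ct\}$ satisfies $Z_\delta\se Y_\gamma$ or $Y_\gamma\se Z_\delta$. In the first case \autoref{summary}(1) gives $\tilde Z_\delta\se\tilde Y_\gamma$, and in the second \autoref{summary}(2) gives $\tilde Y_\gamma\se\tilde Z_\delta$; combined with \autoref{summary}(3), which identifies $\widetilde{Y\ct}$ with $(\tilde Y)\ct$, this yields one of the four inclusions among $\tilde Y_\delta,\tilde Y_\delta\ct,\tilde Y_\gamma,\tilde Y_\gamma\ct$, i.e. $\tilde Y_\gamma$ and $\tilde Y_\delta$ are nested. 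Here \autoref{summary} is itself obtained (as the excerpt states) from \autoref{C-forced_compatible} — that no edge is forced by both $Y$ and $Y\ct$ — together with the monotonicity \autoref{monotone}, by transfinite induction along the fixed well-order of $\Lcal_B$. So modulo those already-available facts this clause is essentially bookkeeping.

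The substantive part, and the step I expect to be the main obstacle, is the cross-block clause: $\tilde X$ nested with $\tilde Y$ for $X\in\Lcal_B$, $Y\in\Lcal_D$. The point is that $B\ne D$ are separated inside $\Ncal$, so there is some $N\in\Ncal$ separating a vertex of $B$ from a vertex of $D$; choosing orientations we may assume $B\se V(N)$ and $D\se V(N\ct)$, or more precisely that the "$B$-side" and "$D$-side" of (a suitable member of) $\Ncal$ are on opposite sides. I would show that the forcing process defining $\hat X$ (and then $\tilde X$) stays on the $B$-side of $N$ while that defining $\hat Y$ stays on the $D$-side: an edge forced at step $1$ by $X$ is incident with a vertex of $C\sm D'$ where $(C,D')$ is the induced vertex-separation of $G_T[B]$, hence incident with a vertex of $B$; and the members of $\Mcal_B$ that get forced are of the form $Z[B]\ct$ for $Z\in\Ncal$ with $B\se V(Z[B])$. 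The key geometric fact is that every $Z[B]\ct$ (for $Z\in\Ncal$) and every $W[D]\ct$ (for $W\in\Ncal$) sit on opposite sides of, or are nested with, the separator realizing $B\ne D$, so that $\hat X\se N'$ and $\hat Y\se (N')\ct$ for an appropriate $N'\in\Ncal$ — and hence $\hat X$ and $\hat Y$ are nested. One then upgrades from $\hat X,\hat Y$ to $\tilde X,\tilde Y$: since $\tilde X$ is assembled from edges forced by $X$ together with whole members $\tilde Z_\alpha$ of the earlier-built extended family (all of which, inductively, lie on the $B$-side), $\tilde X$ still lies on the $B$-side of $N'$, and symmetrically $\tilde Y$ on the $D$-side, so they remain nested. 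I expect the care needed in this last part to lie in choosing the right member of $\Ncal$ (or rather of $\Mcal_B$ versus $\Mcal_D$) that witnesses the separation of $B$ from $D$, and in checking that the transfinite construction of $\tilde X$ never reaches across it — essentially a containment-monotonicity argument riding on \autoref{Y_hat_nested}/\autoref{summary}, but one has to verify it does not get spoiled when the third clause in the definition of $\tilde Y_\gamma$ ("contained in every $\tilde Z_\alpha$ with $Y_\gamma\se Z_\alpha$ and not forced by $Y_\gamma\ct$") kicks in.
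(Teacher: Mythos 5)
Your overall route is the paper's: nestedness inside $\tilde\Lcal_B$ from parts 1--3 of \autoref{summary}, the `Moreover'-part via the dichotomy ``for every $Z\in\Ncal$ either $Z[B]\ct\se\tilde X$ or $\tilde X\se Z[B]$'' (your ``$X\se\tilde Y_\gamma$ or $X\se(\tilde Y_\gamma)\ct$ for every $X\in\Mcal$'' is exactly this), and the cross-block clause via a $Z\in\Ncal$ distinguishing $B$ and $D$. Two caveats on the second clause: \autoref{Y_hat_nested} alone does not give it for $\tilde X$ (the third clause in the definition of $\tilde Y_\gamma$ does not add whole members of $\Mcal$), so the dichotomy genuinely needs the transfinite induction along the well-order of $\Lcal_B$ that the paper invokes; you acknowledge this, so it is a matter of writing it out.

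The real problem is your ``key geometric fact'' for the cross-block step: that for some $N'\in\Ncal$ the forcing for $X\in\Lcal_B$ stays on the $B$-side and that for $Y\in\Lcal_D$ on the $D$-side, i.e.\ $\hat X\se N'$ and $\hat Y\se(N')\ct$. This is too strong and can fail. Take $Z\in\Ncal$ distinguishing $B$ and $D$ with $Z[B]=Z$ and $Z[D]=Z\ct$. A vertex $v\in\partial(Z)$ may lie in both blocks and lie strictly on the $X$-side of the vertex-separation of $G_T[B]$ induced by $X$; then the edges of $Z\ct$ at $v$ are forced at step $1$, the separation $Z\ct=Z[B]\ct\in\Mcal$ is forced at step $2$, and so $Z[B]\ct\se\hat X\se\tilde X$ — the extension swallows the whole far side rather than staying inside $Z[B]$. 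Symmetrically $Z$ may end up inside $\tilde Y$, and then no single $N'$ of the kind you want exists (it would have to equal $Z\ct$, which does not contain the step-$1$ edges of $\hat X$ at interior vertices of $B$). What is true, and all that is needed, is the dichotomy you already stated: applied to the distinguishing $Z$ it gives $\tilde X\se Z$ or $\tilde X\ct\se Z$, and $Z\se\tilde Y$ or $Z\se\tilde Y\ct$; in each of the four combinations one of $\tilde X,\tilde X\ct$ is contained, via $Z$, in one of $\tilde Y,\tilde Y\ct$, so $\tilde X$ and $\tilde Y$ are nested. Replace your containment claim by this short case analysis (which is the paper's argument) and the proof goes through.
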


\begin{proof}
The set $\widetilde{ \Lcal}_\beta$ is nested by \autoref{welldef3}. The 
`Moreover'-part follows from \autoref{welldef4}.
So it remains to show that for any separations $(A,B)\in \Lcal_\beta$ and  $(C,D)\in \Lcal_\gamma$, 
the 
extensions $\widetilde{(A,B)}$ and $\widetilde {(C,D)} $ are 
nested.

Since the blocks $\beta$ and $\gamma$ are distinct, there is a separation $(E,F)$ of $\Ncal$ such 
that one side includes the block $\beta$ and the other side includes the block $\gamma$.
By symmetry we may assume that $\beta$ is included in $E$ and $\gamma$ is included in $F$.
By \autoref{welldef4} $\widetilde{(A,B)}$ is nested with $(E,F)$. 
An argument as in the proof of \autoref{welldef4} gives that $F\sm E$ is included in $B\sm A$ or 
$A\sm B$. So either $\widetilde{(A,B)}$ or its reverse is $\leq (E,F)$. 

By \autoref{welldef2} it would be enough to show that one of $\widetilde{(A,B)}$ or its reverse is 
nested with $\widetilde {(C,D)} $. Hence by replacing `$(A,B)$' by $(B,A)$ if necessary, we assume 
that $\widetilde{(A,B)}\leq (E,F)$. Similarly, one may assume that 
$(E,F)\leq \widetilde{(C,D)}$. Combining this yields that $\widetilde{(A,B)}$ and 
$\widetilde {(C,D)} $ are nested.
\end{proof}

\begin{obs}\label{from_torso_to_G}
Let $\beta$, $P$, $Q$, $P_\beta$ and $Q_\beta$ as in \autoref{extendable_lemma_rewritten_3}.
Let $\Lcal$ be a nested set of separations in $G_T[\beta]$.
If a separation $(C,D)\in \Lcal$ distinguishes the induced robust profiles $P_\beta$ and $Q_\beta$ 
in the 
torso  $G_T[\beta]$, then the extension $\widetilde{(C,D)}$ 
distinguishes the robust profiles $P$ and $Q$. 
\end{obs}

\begin{proof}By symmetry we may assume that $C$ is big in $P_\beta$ and $D$ is big in $Q_\beta$.
As $P_\beta$ is a robust profile by \autoref{induces_in_torso}, the component property yields that 
there is 
a component $K_1$ of $\beta-(C\cap 
D)$ such that $K_1\cup (C\cap D)$ is big in $P_\beta$. 
So $K_1$ is a subset of $C$. 

The extension $\widetilde{(C,D)}=(\tilde{C}, \tilde{D})$ has the separator $C\cap D$ by 
\autoref{same_order}. Let $K_1'$ be the components of $G-C\cap D$ such that the side $K_1'\cup 
(C\cap D)$ is 
big in $P$.  As $P_\beta$ is induced by $P$, 
it must be that $K_1$ contains a vertex of $K_1'$. In particular $K_1'$ cannot be a subset of 
$\tilde D$. So it must be a subset of $\tilde C$. 
Hence $\tilde C$ is big in $P$. Similarly one 
shows that $\tilde D$ is big in $Q$. So the extension $\widetilde{(C,D)}$ 
distinguishes the robust profiles $P$ and $Q$. 
\end{proof}

\subsection{Miscellaneous}

The lemmas summarised in this subsection are well-known. 

\begin{lem}\label{are_nested}
Let $(A,B)$ and $(C,D)$ be proper separations such 
that $A\sm B$ is connected and does not intersect the separator $C\cap D$. Then $(A,B)$ and $(C,D)$ 
are nested.
\end{lem}

\begin{proof}
By the definition of nestedness, it suffices to show that $(A,B)\leq (C,D)$ or $(A,B)\leq (D,C)$.
As the connected set $A\sm B$ does not intersect the separator $C\cap D$, it is included in $C\sm 
D$ or $D\sm C$.
By symmetry, we may assume that is is included in $C\sm D$. So $A\sm B$ is included in $C\sm D$. 
Hence by \autoref{proper_nested} $(A,B)$ and 
$(C,D)$ are nested. 
\end{proof}

\begin{lem}[{\cite[Lemma 2.2]{CDHH:k-blocks}}]\footnote{This is Lemma 2.2 of that paper with the 
roles of `$(C,D)$' and 
`$(E,F)$' interchanged.}\label{corner_nested}
Let $(A,B)$, $(C,D)$ and $(E,F)$ be proper separations such that first $(A,B)$ and 
$(C,D)$ are not nested and second the corner separation $(A\cap C,B\cup D)$ is not nested 
with $(E,F)$. 
Then $(E,F)$ is not nested with $(A,B)$ or $(C,D)$.
\end{lem}

A separation $(A,B)$ of a graph $G$ is \emph{tight} if every component 
of $G$ without the separator $A\cap B$ has the whole separator $A\cap B$ in its neighbourhood.

\begin{lem}\label{many_comp_nested}
Let $(A,B)$ be a separation of order at most $k$.
Let $(C,D)$ be a tight separation such that the graph $G$ without the separator $C\cap D$ has at 
least $k+1$ components.
Then one of the links $(C\cap D)\sm A$ or $(C\cap D)\sm B$ is empty.
\end{lem}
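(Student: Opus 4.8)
The plan is to argue by contradiction: suppose both links $\partial(Y)\sm V(X)$ and $\partial(Y)\sm V(X\ct)$ are nonempty. Since $|\partial(X)|=k$ and $G-\partial(Y)$ has at least $k+1$ components, by pigeonhole there is a component $C$ of $G-\partial(Y)$ that meets neither $V(X)\sm V(X\ct)$ nor, well — more carefully — there is a component $C$ of $G-\partial(Y)$ containing no vertex of $\partial(X)$. Indeed the $k+1$ components are vertex-disjoint and $\partial(X)$ has only $k$ vertices, so some component $C$ satisfies $C\cap\partial(X)=\emptyset$. The idea is then to use the tightness of $Y$ to replace $Y$ by the componental separation $s_C$ (or rather $s_{C'}$ for a well-chosen component) while keeping its boundary equal to $\partial(Y)$, and then invoke \autoref{are_nested}: a componental separation whose defining component avoids $\partial(X)$ is nested with $X$, contradicting that both links are nonempty (which is exactly the failure of nestedness recorded in the paragraph before \autoref{uni_end_in_end}).

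First I would make precise the deduction that nonemptiness of both links $\partial(Y)\sm V(X)$ and $\partial(Y)\sm V(X\ct)$ implies $X$ and $Y$ are not nested: this is essentially the remark in \autoref{prelims1} that if a vertex lies in $\partial(Y)\sm V(X)$ it is incident with an edge of $Y\sm X$ and one of $Y\ct\sm X$, and the symmetric statement for the other link, so none of the four inclusions defining nestedness can hold. Next I would pick the component $C$ of $G-\partial(Y)$ with $C\cap\partial(X)=\emptyset$ as above. Since $Y$ is tight, $\partial(s_C)=\partial(Y)$, so $s_C$ is a componental separation with boundary exactly $\partial(Y)$ and with its defining component $C$ disjoint from $\partial(X)$. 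Now \autoref{are_nested}, applied with the roles $X\mapsto s_C$, $Y\mapsto X$, gives that $s_C$ and $X$ are nested.

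It then remains to transfer nestedness of $s_C$ and $X$ back to nestedness of $Y$ and $X$, which will contradict the first paragraph. Here is where I expect the main subtlety: $s_C$ and $Y$ need not be nested in general, so I cannot simply say "$Y$ and $X$ are nested because $s_C$ and $X$ are." Instead I would argue directly about the links. Nestedness of $s_C$ and $X$ means one of $s_C\se X$, $s_C\se X\ct$, $X\se s_C$, $X\se s_C\ct$ holds. Since $\partial(s_C)=\partial(Y)$, any such inclusion forces one of the links $\partial(Y)\sm V(X)$, $\partial(Y)\sm V(X\ct)$ to be empty: for instance if $X\se s_C\ct$ then $V(X)\se V(s_C\ct)$, and one checks $V(s_C\ct)$ together with the structure of $\partial(Y)=\partial(s_C)$ forces $\partial(Y)\sm V(X\ct)$ empty — more robustly, I would observe that $C$ is a single component of $G-\partial(Y)$ disjoint from $\partial(X)$, hence $C$ lies entirely in one side of the vertex-separation induced by $X$; whichever side it is, the edges incident with $C$ are all in $X$ or all in $X\ct$, which pins down that $\partial(Y)$ (= neighbourhood-type boundary of $C$, by tightness) lies on one side. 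Spelling this out carefully — that tightness plus "$C$ avoids $\partial(X)$" forces a link to be empty — is the one step needing genuine care; the rest is bookkeeping with the definitions of boundary, link, and componental separation.
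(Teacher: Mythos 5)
Your proof is correct, but it takes a genuinely different route from the paper's. The paper argues directly with the two assumed link vertices $v\in\partial(Y)\sm V(X)$ and $w\in\partial(Y)\sm V(X\ct)$: by tightness both $v$ and $w$ lie in the neighbourhood of \emph{every} component of $G-\partial(Y)$, so the at least $k+1$ components give $k+1$ internally disjoint $v$--$w$ paths, while all edges at $v$ lie in $X\ct$ and all edges at $w$ lie in $X$, so $\partial(X)$ (of size $k$) separates $v$ from $w$ --- a contradiction by counting. You use the $k+1$ components dually: by pigeonhole some component $C$ avoids the $k$ vertices of $\partial(X)$, and tightness (needed only for this one $C$) gives $\partial(s_C)=\partial(Y)$. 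The step you flag as delicate does go through, and in fact more simply than via \autoref{are_nested}: since $C$ is connected and misses $\partial(X)$, all edges of $s_C$ lie in $X$ or all lie in $X\ct$, and since every vertex of $\partial(Y)=\partial(s_C)$ is incident with an edge of $s_C$, this places $\partial(Y)$ inside $V(X)$ or inside $V(X\ct)$, i.e.\ one link is empty, contradicting the assumption directly; your four-inclusion case analysis for the nested pair $s_C$, $X$ also works (each inclusion forces a link of $Y$ to be empty once $\partial(s_C)=\partial(Y)$, using that boundary vertices meet both $s_C$ and $s_C\ct$), but then the opening observation that $X$ and $Y$ are not nested is not actually needed. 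In comparison, the paper's fan-counting argument is shorter and self-contained, while yours is more structural: it uses tightness more economically (for a single component) and reuses \autoref{are_nested}, at the cost of the extra transfer step from $s_C$ back to $Y$.
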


\begin{proof}
Suppose not for a contradiction, then there are vertices $v\in (C\cap D)\sm A$ and $w\in (C\cap 
D)\sm B$. Then $v$ and $w$ are in the neighbourhood of every component of  $G$ without the 
separator $C\cap D$. Thus 
there are $k+1$ internally disjoint paths from $v$ to $w$.
All of these paths contain vertices of the separator $A\cap B$.
This contradicts the assumption that the separator $A\cap B$ contains at most $k$ vertices.
\end{proof}

Given two vertices $v$ and $w$, a separator $S$ separates $v$ and $w$ \emph{minimally} if each 
component of $G-S$ containing $v$ or $w$ has the whole of $S$ in its neighbourhood.

\begin{lem}[{\cite[Statement 2.4]{Halin:cuts}}]\label{dk}
Given vertices $v$ and $w$ and $k\in \Nbb$, there are only finitely many distinct separators of 
size 
at most $k$ separating $v$ from $w$ minimally.
\end{lem}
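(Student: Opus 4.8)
The statement to prove is \autoref{dk}: for fixed vertices $v,w$ and $k\in\Nbb$, there are only finitely many separators of size at most $k$ that separate $v$ from $w$ \emph{minimally}. (This is cited to Halin \cite{Halin:cuts}, but I sketch how I would reconstruct it.) The plan is to argue by contradiction: suppose there are infinitely many such separators $S_1,S_2,\dots$, all of size $\le k$. First I would pass to an infinite subfamily on which $|S_i|=m$ is constant, and then, by a further pigeonhole on the $\binom{m}{\cdot}$ possible patterns, to an infinite subfamily such that the $S_i$ have the same intersection $S_\infty=\bigcap_i S_i$ with each other ``in the limit'' — more precisely, I would use a sunflower/$\Delta$-system style argument: among infinitely many sets of size $m$ one finds an infinite subfamily forming a $\Delta$-system with a fixed core $C$, so $S_i\sm C$ are pairwise disjoint and nonempty for $i$ in an infinite index set, and we may assume $|S_i\sm C|=p\ge 1$ is constant.

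Next I would exploit minimality. For each $i$, every component of $G-S_i$ containing $v$ (resp.\ $w$) has all of $S_i$ in its neighbourhood; in particular there are $|S_i|$ internally disjoint $v$–$S_i$ paths meeting $S_i$ exactly in its endpoints, and likewise for $w$, so by Menger there are $|S_i|$ internally disjoint $v$–$w$ paths, forcing $|S_i|$ to equal the (finite) connectivity $\kappa(v,w)$ between $v$ and $w$; call this common value $m$. Now fix two indices $i\ne j$ from the $\Delta$-system and consider the ``nested corner'' argument: the separator $S_j$ must meet each of the $m$ internally disjoint $v$–$w$ paths provided by $S_i$'s minimality, and vice versa. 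Using that $S_i$ and $S_j$ are both \emph{minimal} separators of the same minimum size $m$, one shows that a vertex of $S_i\sm S_j$ lying in the $w$-side of $S_j$ together with the disjoint-paths structure forces $S_i\cap(\text{$v$-side of }S_j)$ to be empty or $S_i$ itself, i.e.\ the two minimal separators of minimum size are ``parallel'' and linearly ordered between $v$ and $w$. Concretely I expect to show: any two minimum-size minimal $v$–$w$ separators $S$, $S'$ satisfy that one of them lies (weakly) on the $v$-side of the other, so the whole family is linearly ordered by ``closer to $v$''.

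Finally, having a linear order on our infinite $\Delta$-system $S_{i_1}, S_{i_2},\dots$ with pairwise-disjoint petals $S_{i_r}\sm C$ of fixed size $p\ge1$, I would derive a contradiction: pick one petal vertex $x_r\in S_{i_r}\sm C$ for each $r$; minimality of $S_{i_r}$ gives $m$ internally disjoint $v$–$w$ paths through $S_{i_r}$, and by the parallel/linear structure these can be threaded so that the segments between consecutive separators $S_{i_r}$ and $S_{i_{r+1}}$ are internally disjoint across different $r$. Concatenating, the petal vertices $x_1,x_2,\dots$ would all lie on $m$ internally disjoint $v$–$w$ paths that visit infinitely many distinct vertices, but since there are only $m$ such paths and each is finite (a $v$–$w$ path), at most $m$ of the $x_r$ can occur — contradiction once the infinite index set is used up. (If $p=0$ the $\Delta$-system core equals all $S_i$, so there is only one separator, contradiction directly.)

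\textbf{Main obstacle.} The delicate point is the second paragraph: proving that any two \emph{minimum-size} minimal $v$–$w$ separators are automatically nested/parallel and hence linearly ordered between $v$ and $w$. This is where minimality (every component touching $v$ or $w$ sees the whole separator) is essential, and it is easy to state but needs a careful Menger-type exchange argument — exactly the kind of ``corner'' analysis that \autoref{corner_nested} and \autoref{many_comp_nested} are designed to support, so I would try to phrase it using those lemmas rather than re-deriving the path arguments from scratch. Once the linear order is in hand, the $\Delta$-system finish is routine.
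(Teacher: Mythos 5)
Your reduction to minimum-size separators is where the argument breaks. From the definition of ``separating minimally'' you only know that every vertex of $S_i$ has a neighbour in the component $C_v$ containing $v$ and a neighbour in the component $C_w$ containing $w$; this does \emph{not} give $|S_i|$ internally disjoint $v$--$S_i$ paths (all such paths could be forced through one cutvertex of $C_v$), so the Menger step and the conclusion $|S_i|=\kappa(v,w)$ fail. In fact minimal separators in this sense need not have minimum size at all: in the graph with edges $va,ab,ac,bw,cw$ the set $\{b,c\}$ separates $v$ from $w$ minimally although $\kappa(v,w)=1$. Since the lemma concerns all separators of size at most $k$, not just minimum ones, the whole plan built on ``constant size $=\kappa(v,w)$'' collapses. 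The second pillar is also false: even minimum-size minimal separators are generally not nested or linearly ordered between $v$ and $w$. Take two internally disjoint $v$--$w$ paths $vx_1x_2w$ and $vy_1y_2w$: then $\{x_1,y_2\}$ and $\{x_2,y_1\}$ are minimum minimal separators that cross, so no chain structure is available for your $\Delta$-system endgame. (The $\Delta$-system step itself is harmless but does no real work.)

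Note that the paper does not prove this lemma; it is quoted from Halin. The standard argument is a short induction on $k$ and avoids all of the above. If $v$ and $w$ lie in different components or are adjacent the statement is trivial, so fix one $v$--$w$ path $P$; every nonempty minimal separator contains an inner vertex $x$ of $P$, of which there are finitely many. For a minimal separator $S\ni x$ of size at most $k$, the components of $G-S$ containing $v$ or $w$ have neighbourhood exactly $S$, hence in $G-x$ they have neighbourhood exactly $S\setminus\{x\}$; so $S\setminus\{x\}$ separates $v$ from $w$ minimally in $G-x$ and has size at most $k-1$. The map $S\mapsto S\setminus\{x\}$ is injective on separators containing $x$, so the induction hypothesis applied to $G-x$ bounds their number, and summing over the finitely many $x\in P$ finishes the proof. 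This is the kind of argument you should aim for: it uses minimality only through the ``full neighbourhood'' property and never needs any comparison between two different separators.
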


\section{Distinguishing the tangles}\label{nested_sets}
The aim in this section is to construct for any graph  a nested set of separations of finite order 
that 
distinguishes any two vertex-ends efficiently, which is needed in the proof of \autoref{undom_td}.  
A related result is proved in \cite{CDHS:can1}.
Actually, we shall prove the stronger statement that there is a 
nested set $\Ncal$ of separations that distinguishes any two tangles efficiently. 
A simplified version of this proof for finite graphs has been published in 
\cite{car_tangle-tree}.

{\bf Overview of the proof}

We shall construct the set $\Ncal$ as an ascending union of sets $\Ncal_k$ one for each $k\in \Nbb$, 
where $\Ncal_k$ is a nested set of separations of order at most $k$ distinguishing efficiently any 
two tangles\footnote{Actually this is not quite correct as we need `robust profiles' instead of 
`tangles'. This detail will be discussed at the end of the sketch. } of order $k+1$, see 
\autoref{fig:nested_tds}.
\begin{figure}
\begin{center}
   	  \includegraphics[height=3.5cm]{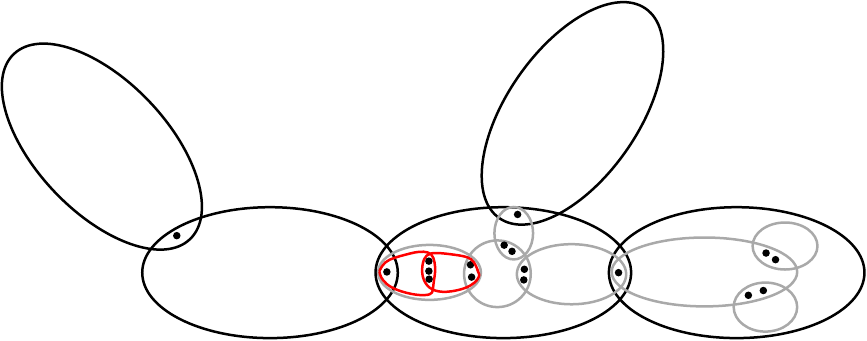}
   	  \caption{The tree-decomposition corresponding to the nested set $\Ncal_1$ is indicated by 
black parts. In torsos of that tree-decomposition, in grey we indicated a 
tree-decomposition for $\Ncal_2$. In each torso of that tree-decomposition we have a further 
tree-decomposition given by $\Ncal_3$, etc. }\label{fig:nested_tds}
\end{center}
   \end{figure}
Any two tangles of order $k+2$ that are not distinguished by $\Ncal_k$ will live 
in the same $\Ncal_k$-block.
We obtain $\Ncal_{k+1}$ from $\Ncal_k$ by adding for each $\Ncal_k$-block $\beta$ a nested set 
$\widetilde{ \Ncal_{k+1}(\beta)}$ that distinguishes efficiently any two tangles of order 
$k+2$ living in $\beta$. Working in the torsos $G_T[\beta]$ will ensure that the sets 
$\widetilde{ \Ncal_{k+1}(\beta)}$ for different blocks $\beta$ will be nested with each other.

Summing up, we are left with the task of finding in these torso graphs $G_T[\beta]$ a nested set 
distinguishing  efficiently tangles of order $k+2$.
\autoref{pre_tight_version} deals with this problem if the torso $G_T[\beta]$ is `nice enough'.
In order to make all torso graphs nice enough, we first do an additional step in which we enlarge 
$\Ncal_k$ a little bit so that for the larger nested set the new torso graphs are the old ones with 
the junk cut off. The main lemma for this enlargement is \autoref{S(k,r)_nice}.

As explained in \autoref{why_profiles} and \autoref{not_block}\footnote{Indeed, the robust profile 
induced by any tangle in the torso need not be a tangle. }, for such a torso-approach to work we 
need to work within the superclass of robust profiles that includes all the tangles (instead of 
just the tangles).

Finishing the overview, we first state \autoref{pre_tight_version} and \autoref{S(k,r)_nice}  
and introduce the necessary definitions for that.

For any robust profile $P$ and $k\in \Nbb$, its \emph{restriction} $P_k$ to $k$ consists of those 
separations in $P$ that have order at most $k$. The order of $P_k$ is the minimum of $k+1$ and the 
order of $P$.
A \emph{(robust) profile set} is a set of robust profiles that is closed under restrictions.
Until the end of Subsection \ref{subsec:tight}, we fix a graph $G$, a number $k\in 
\Nbb\cup\{\infty\}$ and a profile set $\Pcal$.

A nested set $\Ncal$ of separations is \emph{extendable} (for $\Pcal$) if 
for any two (distinct) robust profiles in $\Pcal$ of the same order, there is some separation 
distinguishing these two robust profiles efficiently that is nested with $\Ncal$.

A separation is \emph{relevant} (for a number $k\in \Nbb\cup\{\infty\}$, a graph $G$ and a 
profile set $\Pcal$) if it has order at most $k$ and it distinguishes two robust profiles
in $\Pcal$ efficiently -- in particular, it has finite order. We 
denote the set of all relevant separations by $R(k,\Pcal,G)$. 

Given a separation $(A,B)$, a component $C$ of $G-A\cap B$ is \emph{degenerated} if its 
neighbourhood\footnote{Throughout, we denote the neighbourhood of a vertex set $C$ by $N(C)$. } 
$N(C)$ is a proper subset of the separator $A\cap B$, see \autoref{fig:degenerated}.
\begin{figure}
\begin{center}
   	  \includegraphics[height=3cm]{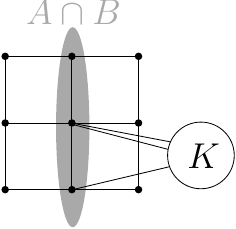}
   	  \caption{The separator $A\cap B$ is indicated in grey and has size three. The 
component $K$ of $G-A\cap B$ has only two neighbours in the separator 
$A\cap B$ and thus is degenerated. }\label{fig:degenerated}
\end{center}
   \end{figure}
 
A separation is \emph{degenerated} relative to $(A,B)$ if it is of the form 
$(C\cup N(C), G\sm C)$, where $C$ is a degenerated component of $G-A\cap B$. 
Given a set $\Scal$ of separations, its \emph{degenerator} is the set of separations that are 
degenerated relative to some separation in $\Scal$.
We denote the degenerator of the set $R(k,\Pcal,G)$ of relevant separations  by $S(k,\Pcal,G)$. 
If it is clear from the context what $G$ is, we shall just write $R(k,\Pcal)$ or $S(k,\Pcal)$, 
or even just $R(k)$ or $S(k)$.

\begin{eg}
Every relevant separation in $R(k)$ is tight if and only if $S(k)$ is empty.
\end{eg}

\begin{thm}\label{pre_tight_version}
Let $k\in \Nbb$. Assume that $S(k)=\emptyset$ and 
$R(k-1)=\emptyset$. Let $\Ncal$ be any nested subset of $R(k)$ that is inclusion-wise maximal.

Then $\Ncal$ distinguishes any two robust profiles of order $k+1$ in $\Pcal$ efficiently and is 
extendable.
\end{thm}

\begin{lem}\label{S(k,r)_nice}
If $R(k-1)$ is empty, then the degenerator $S(k)$ is a nested 
extendable set of separations.
\end{lem}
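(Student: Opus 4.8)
I want to show that under the hypothesis $R(k-1,r)=\emptyset$, the set $S(k,r)$ is (i) nested and (ii) extendable for $\Pcal$. The key structural observation is that each $s_C\in S(k,r)$ is a componental separation coming from a component $C$ of $G-\partial(X)$ for some distinguishing separation $X\in R(k,r)$ with $\partial(s_C)\subsetneq\partial(X)$. Since $\partial(s_C)$ has order at most $k-1$ and $R(k-1,r)=\emptyset$, no separation of order $\le k-1$ distinguishes two profiles in $\Pcal$; this is the leverage that both forces nestedness and lets me replace ``distinguished by a small-order separation'' arguments with ``distinguished efficiently at order exactly $k$''.

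\textbf{Nestedness.} First I would take two members $s_C, s_D\in S(k,r)$, arising from $X,X'\in R(k,r)$ respectively (possibly $X=X'$). If $C$ and $D$ come from the same $X$, then either they are the same component or disjoint components of $G-\partial(X)$, and in the latter case $s_C\subseteq s_D\ct$, so they are nested; so assume $X\ne X'$. I want to apply \autoref{are_nested}: it suffices to show that $C$ does not meet $\partial(s_D)$ or that $D$ does not meet $\partial(s_C)$. Suppose both fail, i.e.\ $C$ meets $\partial(s_D)\subseteq\partial(X')$ and $D$ meets $\partial(s_C)\subseteq\partial(X)$. The plan is to derive a contradiction with efficiency: the sets $\partial(s_C)$ and $\partial(s_D)$ have order $\le k-1$, and I want to show that the ``corner'' component structure produces a separation of order $\le k-1$ that still distinguishes the two profiles $X$ (resp.\ $X'$) distinguishes — contradicting $R(k-1,r)=\emptyset$. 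Concretely, since $s_C$ has strictly smaller boundary than $X$ yet (by (P3) and the way $S(k,r)$ is built) $s_C$ still lies on the ``profile side'' inherited from $X$, one checks that $s_C$ itself distinguishes the same pair of profiles that $X$ does — but $|\partial(s_C)|\le k-1$, immediately contradicting $R(k-1,r)=\emptyset$. I expect the real work is in verifying this last claim carefully (that $s_C$, being a component of $G-\partial(X)$ whose side matches the profile's haven, distinguishes the same profile pair as $X$), using (P1) and (P3) for both profiles; once that is in hand, $S(k,r)=\emptyset$ or else we only get $s_C$'s whose sides are forced, and nestedness follows from \autoref{are_nested} applied to each pair, using that the ``offending'' component can't meet the other's (small) boundary without collapsing the order. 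I would organise this as: show each $s_C\in S(k,r)$ has the property that its unique side-in-the-haven component does not meet $\partial(X')$ for the other $X'$, by a minimal-counterexample / corner argument, and then invoke \autoref{are_nested}.

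\textbf{Extendability.} Here I must show: given two distinct profiles $P,Q\in\Pcal$ of the same order, there is a separation $X$ nested with $S(k,r)$ that distinguishes $P$ and $Q$ efficiently. If the efficient order of distinguishing $P,Q$ is $\le k-1$, this contradicts $R(k-1,r)=\emptyset$ unless $P,Q$ are not distinguished at all below order $k$; so the efficient distinguishing order is $\ge k$, and if it is exactly $k$ I take $X\in R(k,r)$ a separation distinguishing them efficiently. I then need $X$ (or a suitable componental refinement of it) to be nested with every $s_C\in S(k,r)$. The natural move is to pass to a componental $X$: by (P3) applied to whichever of $P,Q$ contains $X$, there is a componental separation $Y\subseteq X$ with $Y$ in that profile; one checks $Y$ still distinguishes $P$ and $Q$ (by (P1) for the other profile, since $Y\subseteq X$ forces $Y\ct\supseteq X\ct$ into the other profile), and $\partial(Y)=\partial(s_{C'})$ for the component $C'$ with $s_{C'}=Y$, so $|\partial(Y)|\le k$; by efficiency $|\partial(Y)|=k$. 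Now $Y=s_{C'}$ is componental, so to show $Y$ nested with a given $s_C\in S(k,r)$ I again invoke \autoref{are_nested}: I need $C'$ to miss $\partial(s_C)$ or $C$ to miss $\partial(Y)=\partial(Y)$. The obstruction case is exactly the same corner configuration as before, and I would resolve it the same way — a small-order separation distinguishing one of the profile pairs, contradicting $R(k-1,r)=\emptyset$. If the efficient order is $>k$ then $S(k,r)$ trivially contains nothing relevant and any efficient distinguisher is automatically "nested with" $S(k,r)$ in the vacuous sense — though I should double-check the definition of extendable does not require the witness to have order $\le k$; re-reading, it does not, so this case is fine.

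\textbf{Main obstacle.} The crux in both parts is the corner/minimality argument showing that two componental separations of order $\le k-1$ (or one of order $\le k-1$ and one of order $\le k$) arising as ``profile-side components'' cannot be crosswise non-nested without manufacturing a distinguishing separation of order $\le k-1$ — which $R(k-1,r)=\emptyset$ forbids. I expect to use \autoref{corner_nested} together with (P1)/(P2)/(P3) and the robustness of the profiles (to push $X\cap Y$ or a corner piece into the right profile even when $|L(X,Y)|$ is small), and the bound in \autoref{XXX} to control the orders of the corner separations. Making precise ``$s_C$ distinguishes the same profile pair as $X$'' — i.e.\ that shrinking $X$ to a componental piece inside the profile's haven preserves the distinguishing — is the one place I would slow down and write full details, since it is what converts $S(k,r)\ne\emptyset$ together with $R(k-1,r)=\emptyset$ into a contradiction that collapses all the potential non-nestedness.
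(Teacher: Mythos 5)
The crux of your plan fails at the claim you yourself flag as the ``real work'': that $s_C$ (the componental piece with $\partial(s_C)\subsetneq\partial(X)$) distinguishes the same profile pair as $X$. This is false, and in fact the opposite is the first step of the paper's argument: if $X$ distinguishes $P$ and $P'$ efficiently and $s_C\se X$ (say), then $s_C$ is disjoint from $X\ct\in P'$, so $s_C\notin P'$ by (P1); and it cannot lie in exactly one of $P,P'$ either, since by (P0) it would then distinguish them at order $\leq k-1$, contradicting the efficiency of $X$ (no appeal to $R(k-1,r)=\emptyset$ is even needed for this). So $s_C$ lies in \emph{neither} profile, and the profile-side components $D,K$ of $G-\partial(X)$ given by (P3) are different from $C$. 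Had your claim been true, every element of $S(k,r)$ would be contradictory and $S(k,r)$ would be empty --- but the whole point of the lemma is that $S(k,r)$ is in general nonempty (its members are exactly the ``junk'' componental separations that do \emph{not} distinguish profiles), so the contradiction you hope to manufacture is not available, and your corner argument has nothing to run on. The paper's actual route (\autoref{S(k,r)_nice_calculate}) is quite different: it brings in a second efficient distinguisher $Y$ with its own profile pair $Q,Q'$, uses robustness via \autoref{rem1}, \autoref{XXX} and (P2) to place a corner $X\cap Y$ or $X\cap Y\ct$ into $Q$ or $Q'$, uses (P3) and the efficiency of $Y$ to show at most one of $C,D$ meets $\partial(Y)$, and only at the very end uses $R(k-1,r)=\emptyset$ --- to conclude $s_C\notin Q$, hence $Y\sm s_C\in Q$ by (P2), hence $\partial(Y\sm s_C)=\partial(Y)$ by efficiency, hence $C\cap\partial(Y)=\emptyset$; nestedness then follows from \autoref{are_nested}.

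Your treatment of extendability has a second genuine gap: you dismiss the case where the efficient distinguishing order of $P,Q$ exceeds $k$ as ``vacuous'', but nestedness with $S(k,r)$ is not automatic for separations of order larger than $k$ --- a high-order separation can perfectly well cross a low-order one. This case is in fact the main content of extendability, and the paper handles it by proving \autoref{S(k,r)_nice_calculate} for every efficient distinguisher $Y$ of order at least $k$ (i.e.\ $Y\in R(r,r)$), so that \emph{every} such $Y$ avoids each component $C$ and is therefore nested with each $s_C\in S(k,r)$ by \autoref{are_nested}. Also note that in the order-exactly-$k$ subcase you do treat, replacing $X$ by a componental $Y\se X$ is unnecessary for the paper's argument and does not by itself resolve the crossing configurations; the resolution again has to go through the two-distinguisher corner analysis above, not through a small-order distinguisher contradicting $R(k-1,r)=\emptyset$.
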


\subsection{Proof of \autoref{S(k,r)_nice}.}

In this subsection we prove \autoref{S(k,r)_nice}. First we need some preparation.

A separation $(A,B)$ \emph{pre-disqualifies} a separation $(C,D)$ if the order of $(C,D)$ is 
strictly larger than the sizes  $|L(A,C)|$ and $|L(B,C)|$ of corner separators. 
A separation $(A,B)$ \emph{disqualifies} a separation $(C,D)$ if it pre-disqualifies $(C,D)$ 
or its reverse $(D,C)$.

The following lemma shows that relevant separations cannot be disqualified. 

\begin{lem}\label{rem1}
If $(C,D)$ distinguishes robust profiles $P_1$ and $P_2$  efficiently, 
then no separation $(A,B)$ disqualifies $(C,D)$.
\end{lem}

\begin{proof}
We may assume that $C$ is big in $P_1$ and $D$ is big in $P_2$. 
Suppose for a contradiction that some separation $(A,B)$ pre-disqualifies $(C,D)$. 

So the order of $(C,D)$ is strictly larger than $|L(A,C)|$ and $|L(B,C)|$. The side $B\cup 
D$ of the corner separation $(A\cap C, B\cup D)$ is big in the robust profile $P_2$ as it includes a 
big 
side. By the efficiency of $(C,D)$, this corner separation cannot distinguish $P_1$ and $P_2$. Thus 
$A\cap C$ is small in $P_1$. A similar argument shows that also the corner $B\cap C$ must be small 
in $P_1$. This violates the robustness of $P_1$.  This is a 
contradiction to the assumption that $P_1$ is a robust profile. Hence $(A,B)$ cannot 
pre-disqualify $(C,D)$. 
Analogously, one shows that $(A,B)$ cannot pre-disqualify $(D,C)$. 
\end{proof}

\begin{lem}\label{S(k,r)_nice_calculate}
Let $(A,B)$ and $(C,D)$ be two separations distinguishing robust profiles in $\Pcal$ efficiently 
such that 
the order of $(A,B)$ is $k$ and the order of $(C,D)$ is at least $k$.
Let $K$ be a degenerated component of $G-A\cap B$.

If $R(k-1)$ is empty, then $K$ does not intersect the separator $C\cap D$.
\end{lem}

\begin{proof}
By symmetry, we may assume that the component $K$ is included in $A\sm B$. 

\begin{sublem}\label{x1}
 If $R(k-1)$ is empty, then the side $K\cup N(K)$ is small in every robust profile of order greater 
than $k$ of $G$. 
\end{sublem}

\begin{proof}
By assumption, there is a robust profile $P$ of order greater than $k$ such that the side $B$ is big 
in $P$.
As the side $G\sm K$ of the separation $(K\cup N(K),G\sm K)$ includes the big side $B$, it must 
also be big in $P$. Since $R(k-1)$ is empty, the side $K\cup N(K)$ is small in every robust profile 
of order 
greater 
than $k$. 
\end{proof}

\begin{sublem}\label{x2}
If the corner separation $(A\cap C, B\cup D)$ distinguishes two robust profiles  of order greater 
than $k$ efficiently, then the component $K$ does not intersect the separator $C\cap D$. 
\end{sublem}

\begin{proof}
The corner separation of the separations $(A\cap C, B\cup D)$ and $(G\sm K, N(K)\cup K)$ is 
$((A\cap C)\sm K, B\cup D\cup K)$. In particular, $((A\cap C)\sm K, B\cup D\cup K)$ is a 
separation. 
The separator of the separation  $((A\cap C)\sm K, B\cup D\cup K)$ is the corner separator $L(A,C)$ 
without $K$; in formulas $L(A,C)\sm K$. 
This separation also distinguishes two robust profiles efficiently: if the side $B\cup D$ is big in 
a 
robust profile, then also the side $B\cup D\cup K$ is big in that robust profile. By \autoref{x1} if 
the side 
$A\cap C$ is big in a 
robust profile, then also the side $(A\cap C)\sm K$ is big in that robust profile as it satisfies 
the component 
property and $G\sm K$ is big in all robust profiles. 
Hence by the efficiency of the separation $(A\cap C, B\cup D)$, it must be that the order of the 
separation $((A\cap C)\sm K, B\cup D\cup K)$ is at least the order of $(A\cap C, B\cup D)$; 
that is, the corner separator $L(A,C)$ does not intersect the component $K$.

If the component $K$ intersects the  separator $C\cap D$, 
it does so in the link $(C\cap D)\sm B$ as $K$ is a subset of $A\sm B$. Since this link is a subset 
of the corner separator  $L(A,C)$, the component $K$ cannot intersect that link as shown above.  So 
the component $K$ does not intersect the separator $C\cap D$. 
\end{proof}

 Let $Q_1$ and $Q_2$ be two robust profiles distinguished efficiently by the separation $(C,D)$ such 
that 
$C$ is big in $Q_1$ and $D$ is big in $Q_2$. 
 By replacing the separation $(A,B)$ by the separation $(B\cup K, A\sm K)$ if necessary we may 
assume that the side $A$ is big in $Q_1$.

\begin{sublem}\label{corner_analyse}
Either $|L(A,C)|\leq |C\cap D|$ and the corner $A\cap C$ is big in $Q_1$ or else 
$|L(A,D)|\leq |C\cap D|$ and the corner $A\cap D$ is big in $Q_2$.
\end{sublem}

\begin{proof}
Either the side $A$ or $B$ must be big in the robust profile $Q_2$.
We distinguish two cases. 

{\bf Case 1: the side $B$ is big in $Q_2$.} 

If $|L(B,D)|< |A\cap B|$, then the corner $B\cap D$ is in $Q_2$ by the corner property. Thus the 
corner separation $(B\cap D, A\cup C)$ will distinguish $Q_1$ and $Q_2$, which is impossible by 
the efficiency of $(C,D)$. Thus  by \autoref{XXX} $|L(A,C)|\leq 
|C\cap D|$, yielding that the corner  $A\cap C$ is big in $Q_1$ by the corner 
property, as desired.

{\bf Case 2: the side $A$ is big in $Q_2$.} 
 
By \autoref{rem1}, the separation $(C,D)$ does not pre-disqualify $(B,A)$.
Thus either $|L(B,C)| \geq |A\cap B|$ or $|L(B,D)| \geq |A\cap B|$. 
In the first case,  by \autoref{XXX} $|L(A,D)| \leq |C\cap D|$.
Then the corner $A\cap D$ is big in $Q_2$ by the corner property.
Similarly in the second case, $|L(A,C)| \leq |C\cap D|$.
Then the corner $A\cap C$ is big in $Q_1$ by the corner property, as desired. 
\end{proof}
 
 By \autoref{corner_analyse}, one of the corner separations $(A\cap C, B\cup D)$ or $(A\cap D, 
B\cup C)$ distinguishes the robust profiles $Q_1$ and $Q_2$ efficiently. Hence by \autoref{x2} or 
the 
corresponding fact for the corner separation $(A\cap D, 
B\cup C)$, we deduce that the component $K$ does not intersect the separator $C\cap D$.

\end{proof}

\begin{proof}[Proof of \autoref{S(k,r)_nice}.]
Let $(A,B)$ be a relevant separation in $R(k)$ and $(C,D)$ be some separation that distinguishes 
two robust profiles efficiently of order at least $k$.
Let $K_1$ be a degenerated component of $G-A\cap B$ and ${K_2}$ be a component of $G-C\cap D$. 
In order to see that $S(k)$ is a nested, it suffices to show that for any such $K_1$ and ${K_2}$ 
that the separations $(K_1\cup N(K_1), G\sm K_1)$ and $(K_2\cup N(K_2), G\sm K_2)$ are nested.
This is true by \autoref{S(k,r)_nice_calculate} and \autoref{are_nested}.
In order to see that $S(k)$ is an extendable, it suffices to show that for any such $K_1$ and 
$(C,D)$ 
that the separations $(K_1\cup N(K_1), G\sm K_1)$ and $(C,D)$ are nested.
This is true by \autoref{S(k,r)_nice_calculate} and \autoref{are_nested}, as well.
\end{proof}

\subsection{Proof of \autoref{pre_tight_version}.}\label{subsec:tight}

We actually prove the following extension of \autoref{pre_tight_version}. It is more general in the 
sense that it allows for even more flexibility which sets $\Ncal$ we could choose.
Recall that a separation is relevant (in $R(\infty)$) if it distinguishes some two robust profiles 
efficiently.

\begin{thm}\label{tight_version}
Let $k\in \Nbb$. Assume that $S(k)=\emptyset$ and 
$R(k-1)=\emptyset$. 
Any set $\Ncal$ of nested tight separations of order at most $k$ that are not disqualified by any 
relevant separation is extendable.

In particular, any maximal such set distinguishes any two robust profiles of order $k+1$ in $\Pcal$ 
efficiently. 
\end{thm}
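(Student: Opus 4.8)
The plan is to build a maximal nested set $\Ncal$ of tight separations of order at most $k$ that are not disqualified by any $X\in R(r,r)$, and then to show it is extendable. Fix two distinct profiles $P$ and $Q$ in $\Pcal$ of the same order; since $R(k,r)=\emptyset$, they cannot be distinguished efficiently by any separation of order $\leq k$, so if they are distinguished at all, the minimal order $l$ of a distinguishing separation satisfies $l\geq k+1$. The goal is to produce a separation $Y$ of order $l$ distinguishing $P$ and $Q$ efficiently that is nested with $\Ncal$. Because $\Pcal$ is an $r$-profile set, the restrictions $P_k,Q_k$ are $r$-robust profiles of order $k+1$ (or smaller), and by maximality of $\Ncal$ together with the fact that $R(k,r)=\emptyset$, any two distinct profiles of order $\leq k+1$ in $\Pcal$ — in particular $P_k$ and $Q_k$ if they differ — are already distinguished by a member of $\Ncal$; so we may assume $P_k=Q_k$, i.e. $P$ and $Q$ live in a common $\Ncal$-block $B$.

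Next I would pass to the torso. Apply \autoref{extendable_lemma_rewritten} (with this $\Ncal$, $k$, and the efficient distinguisher $Y_0$ of order $l\geq k+1$ that exists by hypothesis) to obtain the unique $\Ncal$-block $B\supseteq\partial(Y_0)$ and the induced $r$-robust profiles $P_B$, $Q_B$ on $G_T[B]$, which are distinguished efficiently by $(Y_0)_B$, of order $\geq l+1\geq k+2$. Now I want to find \emph{inside} $G_T[B]$ a nested extendable set for the induced profiles $\Pcal(B)$ and then extend it back to $G$ via $Y\mapsto\tilde Y$. The point of the hypotheses $S(k,r)=\emptyset$ and ``not disqualified by $R(r,r)$'' is exactly to make $G_T[B]$ ``nice enough'': $S(k,r)=\emptyset$ forces every $X\in R(k,r,\Pcal,G)=\emptyset$-situation to behave, and the disqualification condition should guarantee that separations of $G_T[B]$ extend to $G$ without raising their order (cf. \autoref{same_order}) and without destroying nestedness with $\Ncal$ (\autoref{Y_hat_nested}, \autoref{summary}, \autoref{about_different_blocks}). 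Concretely, I expect to invoke the inductive/recursive machinery that \autoref{S(k,r)_nice} feeds into: having cleaned up $\Ncal_k$ so that $R(k-1,r)=\emptyset$ and $S(k,r)=\emptyset$, the torsos $G_T[B]$ satisfy the same hypotheses one level up, and a maximal nested tight undisqualified set in $G_T[B]$ distinguishes $P_B$ from $Q_B$ there; then \autoref{from_torso_to_G} carries this distinguisher to a separation $\tilde X$ of $G$ distinguishing $P$ and $Q$, and \autoref{same_order} plus efficiency in the torso keeps its order at the optimum $l$.

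For the ``In particular'' clause: given a maximal such $\Ncal$ and two profiles $P,Q$ of order $k+1$ in $\Pcal$, extendability yields a separation $X$ nested with $\Ncal$ distinguishing them efficiently; I must argue $X$ (or a tight componental refinement of it) can be added to $\Ncal$ without violating maximality unless it is already implied — using that $S(k,r)=\emptyset$ means $X$ is already tight, and that $R(r,r)$-disqualification of $X$ is ruled out because $X$ has order $\leq k\leq r$ and is itself an efficient distinguisher, so \autoref{XXX}-type counting shows an element of $R(r,r)$ cannot strongly disqualify it. Hence by maximality $X\in\Ncal$ already, so $\Ncal$ itself distinguishes $P$ and $Q$.

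The main obstacle I anticipate is the gluing/consistency step: showing that the torso-level distinguisher, once extended back to $G$ by the $\tilde{}$-operation, is simultaneously (i) of the correct minimal order (so ``efficiently'' is preserved — this needs \autoref{same_order} together with the fact that no smaller separation distinguishes $P,Q$, which in turn leans on $R(k,r)=\emptyset$ and the efficiency clause of \autoref{extendable_lemma_rewritten}), (ii) nested with all of $\Ncal$ and with the extended separations coming from other blocks (\autoref{about_different_blocks}), and (iii) actually still distinguishing (\autoref{from_torso_to_G}). Verifying that the ``not disqualified by $R(r,r)$'' hypothesis is precisely what propagates from $G$ to each $G_T[B]$ — so that the torsos inherit the hypotheses of the theorem and the recursion closes — is the delicate bookkeeping I'd expect to spend the most care on.
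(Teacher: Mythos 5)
There is a genuine gap: your argument is circular at its two load-bearing points. First, to invoke \autoref{extendable_lemma_rewritten} you need an efficient $P$--$Q$ distinguisher $Y_0$ that is \emph{already nested with $\Ncal$}; producing such a separation is exactly what extendability asserts, so you cannot take it as given at the outset (an efficient distinguisher always exists, but nothing yet says one exists nested with $\Ncal$). Second, your plan then rests on the assertion that ``a maximal nested tight undisqualified set in $G_T[B]$ distinguishes $P_B$ from $Q_B$ there'' --- but that is precisely (an instance of) the ``In particular'' clause of \autoref{tight_version} itself, one torso down, with no base case or decreasing parameter to ground an induction. The recursion of \autoref{nested_set_exists} consumes \autoref{tight_version} as its basic building block; it cannot be run backwards to prove it. Note also that the theorem is about an \emph{arbitrary} nested set $\Ncal\se U$ of tight undisqualified separations, not only maximal ones produced by the construction, and similarly your opening claim that maximality already forces $P_k=Q_k$ presupposes the ``In particular'' statement.

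What is actually needed, and what the paper's proof supplies, is a direct uncrossing argument inside $G$ with no torsos at all. Let $U$ be the set of tight separations of order at most $k$ not disqualified by any member of $R(r,r)$. One first shows (\autoref{not_nested_only_finitely_many}) that a componental separation in $R(r,r)$ crosses only finitely many members of $U$; this uses tightness, \autoref{many_comp_nested}, and crucially Halin's finiteness theorem \autoref{dk} on minimal separators of bounded size --- an ingredient absent from your outline. Then (\autoref{lem2}) one picks, by (P3), a componental efficient distinguisher of $P$ and $Q$, so that among efficient distinguishers there is one, $X$, crossing a minimal (finite) number of elements of $\Ncal$; if $X$ still crosses some $Y\in\Ncal$, the fact that $Y$ is not disqualified by $X\in R(r,r)$, together with \autoref{XXX} and (P1), (P2), yields a corner separation such as $X\ct\cap Y$ that again distinguishes $P$ and $Q$ efficiently, while \autoref{corner_nested} shows it crosses strictly fewer members of $\Ncal$ --- a contradiction, so $X$ is nested with $\Ncal$. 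This corner/uncrossing step is where the hypothesis ``not disqualified by any $X\in R(r,r)$'' is really used; in your proposal it only appears vaguely as making the torsos ``nice enough''. Your sketch of the ``In particular'' clause (tightness from $S(k,r)=\emptyset$, non-disqualification of efficient distinguishers as in \autoref{rem1} --- which, note, needs $r$-robustness rather than mere counting, and \autoref{are_nested} to add a componental distinguisher to a maximal $\Ncal$) is consistent with the paper, but it depends on the main extendability claim, which your argument does not establish.
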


Before we prove \autoref{tight_version}, we need some intermediate lemmas.
Throughout this subsection, we assume that $S(k)$ is empty.
Let $U$ be the set of those tight separations of order at most $k$ that are not disqualified by any 
relevant separation. Since $R(k)$ is a subset of $U$, \autoref{tight_version} implies 
\autoref{pre_tight_version}.

\begin{lem}\label{not_nested_only_finitely_many}
For any relevant separation $(A,B)$ such that $A\sm B$ is connected, there are only finitely many 
separation $(C,D)\in U$ not nested with $(A,B)$.
\end{lem}

\begin{proof}
First, we show that the separation $(A,B)$ is nested with every separation $(C,D)\in U$ such that 
the link $(A\cap B)\sm C$ 
is 
empty.
By \autoref{are_nested}, it suffices to show that the link $(C\cap D)\sm B$ is empty.
As $(A,B)$ does not pre-disqualify $(D,C)$, one of the links $(C\cap D)\sm A$ or 
$(C\cap D)\sm B$ is empty.
As we are done otherwise, we may assume that the link $(C\cap D)\sm A$ is empty. 
If $(C,D)$ is not nested with $(A,B)$, there must be a component of $K$ of $G-(C\cap D)$ all of 
whose 
neighbours are in the center $(A\cap B)\cap(C\cap D)$.
As $(C,D)$ is tight, it must be that $(C\cap D)=(A\cap B)\cap(C\cap D)$ so that $(C\cap D)\sm 
B$ is empty.
Hence $(A,B)$ and $(C,D)$ are nested by \autoref{are_nested}.

Similarly one shows that the separation $(A,B)$ is nested with every separation $(C,D)\in U$ such 
that the link $(A\cap B)\sm 
D$ is empty.

It remains to show that there are only finitely many separations $(C,D)\in U$ not nested with 
$(A,B)$. As 
shown above, in that case both 
links $(A\cap B)\sm C$ and $(A\cap B)\sm D$ are nonempty.
By \autoref{dk}, there are only finitely many triples $(v,w,T)$ where $v,w\in (A\cap B)$ and $T$ 
is a separator of size at most $k$ separating $v$ and $w$ minimally.
Since each separator $C\cap D$ for some $(C,D)$ as above is such a separator $T$, it suffices to 
show that 
there are only finitely many separations in $U$ that have the same separator as $(C,D)$.
This is true as the connected\footnote{Recall that the assumption that $S(k-1)$ is empty 
implies that the graph $G$ is connected. } graph $G$ without the separator $C\cap D$ has only 
finitely many components by 
\autoref{many_comp_nested} (in fact it has at most $|C\cap D|+1$ components).
\end{proof}

\begin{lem}\label{lem2}
Let $\Ncal$ be a nested subset of $U$. For any two robust profiles $P$ and $Q$ of order $\ell \geq 
k+1$ 
that are not 
distinguished by any separation of order less than $k$, there is some 
separation $(A,B)$ that is nested with $\Ncal$ 
and distinguishes $P$ and $Q$ 
efficiently. 
\end{lem}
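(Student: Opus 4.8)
The goal of \autoref{lem2} is, given a nested subset $\Ncal$ of $U$, to find a separation $X \in R(k,r)$ that is nested with $\Ncal$ and efficiently distinguishes two given profiles $P$ and $Q$ of the same order not separated by any separation of lower order. The plan is to start with \emph{any} separation efficiently distinguishing $P$ and $Q$ — such a separation exists by definition of $R(k,r)$ and has order exactly $k$, since the profiles are not distinguished by anything of order less than $k$. The main issue is that this initial separation need not be nested with $\Ncal$, so I will have to massage it into a nested one while preserving both efficiency and membership in $R(k,r)$ (in particular tightness, since $S(k,r)=\emptyset$ forces every member of $R(k,r)$ to be tight).

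The key tool will be the standard ``uncrossing'' move for profiles: if $X$ efficiently distinguishes $P$ and $Q$ and $Z\in \Ncal$ is not nested with $X$, then I want to replace $X$ by one of the corner separations $X\cap Z$, $X\cap Z\ct$, $X\ct\cap Z$, or $X\ct\cap Z\ct$ — whichever one still separates $P$ from $Q$ — and argue via \autoref{XXX} together with the profile axiom (P2) and $r$-robustness (in the guise of \autoref{rem1}, which says efficiently-distinguishing separations are not disqualified by anything of order $\leq r$) that the chosen corner still has order $k$ and is still efficient. The point is that since $Z$ itself is not disqualified by any $X'\in R(r,r)$ (as $Z\in U$), and $X$ is not disqualified by anything of order $\leq r$ either, the orders of the two relevant corners add up via \autoref{XXX} to $|\partial(X)|+|\partial(Z)|$, and neither can drop below $k$ without contradicting efficiency; hence one corner has order exactly $k$. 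Then I would show this new separation is nested with $Z$ — and, crucially, that it has not become ``un-nested'' with any separation of $\Ncal$ it was previously nested with. This is where \autoref{corner_nested} and \autoref{are_nested} should come in: a corner of $X$ and $Z$ that is componental (replace it by the componental subseparation provided by (P3), which is still in the profile and still efficient and, being componental and avoiding the relevant boundary, still nested with the rest by \autoref{are_nested}) stays nested with everything $Z$ was nested with.

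The remaining difficulty is ensuring the process \emph{terminates}: naively, fixing non-nestedness with one $Z\in\Ncal$ could create non-nestedness with another. Here I would invoke \autoref{not_nested_only_finitely_many}: for any componental $X\in R(r,r)$ there are only finitely many $Y\in U$ not nested with it, so the number of ``crossings'' between a candidate separation in $U$ and the fixed nested family $\Ncal\subseteq U$ is finite; by choosing corners componentally I keep all candidates componental and in $U$ (tight, not disqualified), and each uncrossing step strictly decreases the (finite) crossing number — or at least decreases it in a well-founded way — so after finitely many steps I reach a separation of order $k$ in $R(k,r)$, efficiently distinguishing $P$ and $Q$, and nested with all of $\Ncal$. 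I expect the bookkeeping that a single uncrossing step does not increase the total crossing number with $\Ncal$ to be the genuinely delicate part: one must check that the new corner separation, sitting ``inside'' $Z$, cannot cross some $Z'\in\Ncal$ that $X$ did not cross, using that $Z$ and $Z'$ are themselves nested and that the corner is contained in a side of $Z$. Once that monotonicity is in hand, the lemma follows by finite descent.
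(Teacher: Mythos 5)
Your overall strategy is the paper's: start from a \emph{componental} efficient distinguisher of $P$ and $Q$ (it lies in $U$ because $S(k,r)=\emptyset$ forces every member of $R(k,r)$ to be tight and \autoref{rem1} forces non-disqualification), use \autoref{not_nested_only_finitely_many} to see that it crosses only finitely many members of $\Ncal$, and then uncross. Concretely, if $Z\in\Ncal$ crosses the current candidate $X$, one may assume $Z$ does not distinguish $P$ and $Q$ (otherwise $Z$ itself is the desired nested efficient distinguisher), say $Z\in P\cap Q$; since $Z\in U$ is not strongly disqualified by $X\in R(r,r)$, up to swapping the roles of $X$ and $X\ct$ (and of $P$ and $Q$) we have $|L(X,Z\ct)|\ge|\partial(Z)|$, so \autoref{XXX} gives $|L(X\ct,Z)|\le|\partial(X)|=k$, and (P2) puts $X\ct\cap Z$ in $Q$ while upward closure keeps it out of $P$: the corner is again an efficient distinguisher of order $k$. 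The monotonicity you call ``genuinely delicate'' is then immediate from \autoref{corner_nested}: any $Z'\in\Ncal$ crossing $X\ct\cap Z$ must cross $X$ or $Z$, and it cannot cross $Z$ because $\Ncal$ is nested; since $Z$ crosses $X$ but not the corner, the set of crossed members of $\Ncal$ strictly shrinks, and your finite descent (equivalently, the paper's choice of an efficient distinguisher crossing a minimal number of members of $\Ncal$, followed by a contradiction) terminates.

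The one step of your plan that does not work as stated is the replacement of each corner by a componental subseparation via (P3). It is unnecessary: the corner is automatically in $U$ (it lies in $R(k,r)$, hence is tight because $S(k,r)=\emptyset$, and is undisqualified by \autoref{rem1}), and componentality is needed only for the \emph{initial} separation, the single place where \autoref{not_nested_only_finitely_many} is invoked; afterwards finiteness of the crossing number follows from the monotonicity above. It is also unjustified: \autoref{are_nested} applies only when the chosen component avoids the boundary of the other separation, which nothing guarantees, and a componental $s_C\se X\ct\cap Z$ may cross some $Z'\in\Ncal$ with $Z'\se X\ct\cap Z$ even though $Z'$ crosses neither $X$ (it is contained in $X\ct$) nor the corner. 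So this replacement can create new crossings and break your strict descent. Dropping it and working with the literal corner, exactly as above, yields the paper's proof.
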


\begin{proof}
First, we show that there is a separation $(A,B)$ distinguishing $P$ and $Q$ 
efficiently that is nested with all but finitely many separations of $\Ncal$.
Since $S(k)$ is empty, $R(k)$ is a subset of $U$. Let $(A,B)$ be a separation distinguishing 
$P$ and $Q$ efficiently.
As the robust profiles $P$ and $Q$ have the component property, we can pick (and we do pick) the 
separation 
$(A,B)$ 
such that $A\sm B$ is 
connected. By \autoref{not_nested_only_finitely_many}, 
$(A,B)$ is nested with all but finitely many separations of $\Ncal$.
Hence we can pick a separation $(A,B)$ distinguishing $P$ and $Q$ efficiently such that it is not 
nested with a 
minimal number of $(C,D)\in \Ncal$. 

Suppose for a contradiction that there is some separation
 $(C,D)\in \Ncal$ that is not nested with $(A,B)$. 
We may assume that $(C,D)$ does not distinguish $P$ and $Q$ since otherwise $(C,D)$ would 
distinguish $P$ 
and $Q$ efficiently by assumption.
Thus either the side $C$ is big in both $P$ and $Q$ or else the side $D$ is big in both $P$ and $Q$.
Since $(D,C)$ is nested with $\Ncal$, we may by symmetry assume that $C$ is big in both $P$ and 
$Q$. 

Since $(A,B)$ does not pre-disqualify $(D,C)$ by the definition of $U$, either 
$|L(A,D)|\geq 
|C\cap D|$ or $|L(B, D)|\geq |C\cap D|$.
By symmetry, we may assume that $|L(A,D)|\geq |C\cap D|$. 
By exchanging the roles of $P$ and $Q$ if necessary, we may assume that $A$ is big in $P$ and 
$B$ is big in $Q$. 
By \autoref{XXX}, $|L(B, C)|\leq |A\cap B|$. Note that the corner $B\cap C$ is small in $P$ 
as it is included in the small side $B$ of $P$. On the other hand, by the corner property the 
corner  $B\cap C$ is be big in $Q$.
Thus the corner separation $(B\cap C, A\cup D)$ distinguishes $P$ and $Q$ efficiently. 
Any separation in $\Ncal$ not nested with the corner separation $(B\cap C, A\cup D)$ is 
by \autoref{corner_nested} not nested with 
$(A,B)$.
As $(C,D)$ is nested with the corner separation $(B\cap C, A\cup D)$, this corner separation 
 violates the minimality of $(A,B)$.
Hence $(A,B)$ is nested with $\Ncal$, completing the proof.
\end{proof}

\begin{proof}[Proof of \autoref{tight_version}.]
By \autoref{lem2} and since $R(k-1)$ is empty, any nested subset $\Ncal$ of $U$ is extendable.

Since by assumption any relevant separation in $R(k)$ is in the set $U$, it follows that any 
maximal such set $\Ncal$ distinguishes any two robust profiles of order $k+1$ in 
$\Pcal$. It distinguishes efficiently as $R(k-1)$ is empty.  
\end{proof}

\subsection{Proof of the main result of this section.}

In this subsection, we prove the following.

\begin{thm}\label{nested_set_exists}
 For any graph $G$, there is a nested set $\Ncal$ of separations 
that distinguishes efficiently any two robust profiles (that are not restrictions of one another).
\end{thm}

First we need an intermediate lemma about sticking together a nested set $\Ncal$ of 
proper separations 
with nested sets of separations in the torsos of the $\Ncal$-blocks. 
We fix a finite number $k$ and a profile set $\Pcal$.
Let $\Ncal$ be a nested set of separations of order at most $k$ that is extendable for $\Pcal$
and that distinguishes  efficiently any two robust profiles of $\Pcal$ that can be distinguished by 
a 
separation of order at most $k$ in $G$.
For each $\Ncal$-block $\beta$, we denote by $\Pcal(\beta)$ the set of robust profiles in $\Pcal$ 
living in 
$\beta$.
And  let $\Ncal_\beta$ be a set of nested separations of the torso $G_T[\beta]$ of $\beta$ that is 
extendable for the induced robust profiles, induced by those robust profiles in $\Pcal(\beta)$.
We abbreviate $\Mcal=\Ncal\cup \bigcup \widetilde {\Ncal_\beta}$, where the union ranges over all 
$\Ncal$-blocks $\beta$. (Here in order to define the sets $\widetilde{\Ncal_\beta}$ we choose 
arbitrary well-orderings on the sets $\Ncal_\beta$.)

\begin{lem}\label{extendable_iterate}
The set $\Mcal$ is nested, proper and extendable for $\Pcal$.
\end{lem}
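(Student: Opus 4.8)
The plan is to prove that $\Mcal$ is nested first, and then that it is extendable.

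\textbf{Nestedness.} Write $\Mcal = \Ncal \cup \bigcup_B \tilde\Ncal_B$. There are three kinds of pairs to check. First, two separations both in $\Ncal$: these are nested because $\Ncal$ is nested by hypothesis. Second, a separation $Z \in \Ncal$ and a separation $\tilde X$ with $X \in \Ncal_B$: by \autoref{Y_hat_nested} (and the ``Moreover''-part propagated through the construction of $\tilde{\ }$, i.e. the ``Moreover''-part of \autoref{about_different_blocks}) $\tilde X$ is nested with every separation of $\Ncal$. Third, $\tilde X$ and $\tilde Y$ with $X \in \Ncal_B$, $Y \in \Ncal_D$: if $B = D$ this is the first assertion of \autoref{about_different_blocks} ($\tilde\Ncal_B$ is a nested set), and if $B \ne D$ this is the second assertion of \autoref{about_different_blocks}. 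So \autoref{about_different_blocks} does essentially all the work for nestedness, once one observes that its hypotheses (nested set $\Ncal$, nested sets $\Ncal_B$, $\Ncal_D$ of the respective torsos) are exactly what we have fixed here.

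\textbf{Extendability.} Let $P, Q \in \Pcal$ be two distinct profiles of the same order; I must produce a separation $\hat{}$-nested with $\Mcal$ (in particular with $\Ncal$ and with all $\tilde\Ncal_B$) that distinguishes them efficiently. Let $l$ be the order at which $P$ and $Q$ are distinguished efficiently, and let $Y$ be a separation of order $l$ doing this. \emph{Case 1: $l \le k$.} Since $\Ncal$ distinguishes efficiently any two profiles of $\Pcal$ distinguishable by a separation of order $\le k$, some $X \in \Ncal \subseteq \Mcal$ already distinguishes $P$ and $Q$ efficiently, and $X$ is trivially nested with $\Mcal$ — but wait, I need $X$ nested with all the $\tilde\Ncal_B$ too; that is exactly the ``Moreover''-part of \autoref{about_different_blocks} again, which says each $\tilde X'$ is nested with every separation of $\Ncal$. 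So we are done in this case. \emph{Case 2: $l \ge k+1$.} Since $\Ncal$ is extendable for $\Pcal$, there is a separation $Y$ of order $l$ distinguishing $P$ and $Q$ efficiently that is nested with $\Ncal$. By \autoref{extendable_lemma_rewritten}, there is a unique $\Ncal$-block $B$ containing $\partial(Y)$, and $P_B, Q_B$ are well-defined $r$-robust profiles of order $\ge l+1 \ge k+2$ on $G_T[B]$, distinguished efficiently by $Y_B$. In particular $P, Q \in \Pcal_B$ and $P_B, Q_B \in \Pcal(B)$, and they have the same order. Since $\Ncal_B$ is extendable for $\Pcal(B)$, there is a separation $X$ of $G_T[B]$, nested with $\Ncal_B$, distinguishing $P_B$ and $Q_B$ efficiently in $G_T[B]$. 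Now take $\tilde X \in \tilde\Ncal_B \subseteq \Mcal$: by \autoref{from_torso_to_G}, $\tilde X$ distinguishes $P$ and $Q$ in $G$, and by \autoref{same_order} $\partial(\tilde X) \subseteq \partial(X)$, so its order is at most that of $X$, which equals $l$ by efficiency of $Y_B$ (since $Y_B$ has order $l$ as $\partial(Y_B) \subseteq \partial(Y)$ and $Y$ is efficient, and $X$ distinguishes $P_B,Q_B$ efficiently so $|\partial(X)| \le |\partial(Y_B)| \le l$); combined with the fact that $\tilde X$ distinguishes $P$ and $Q$ which cannot be done below order $l$, we get $|\partial(\tilde X)| = l$, i.e. $\tilde X$ distinguishes $P$ and $Q$ efficiently. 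Finally $\tilde X \in \Mcal$ is nested with all of $\Mcal$ by the nestedness part already proved.

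\textbf{Main obstacle.} The only genuinely delicate point is making sure that the bookkeeping on orders goes through: that the separation $X \in \Ncal_B$ produced by extendability of $\Ncal_B$ really has order exactly $l$, and that passing to $\tilde X$ does not change this (it can only shrink the boundary by \autoref{same_order}, but then $\tilde X$ would distinguish $P,Q$ with order $< l$, contradicting that $l$ is the efficient-distinguishing order in $G$). One should also double-check the hypotheses of \autoref{extendable_lemma_rewritten} are met — in particular that $Y$ has order $l \ge k+1$ and is nested with $\Ncal$, which is precisely what extendability of $\Ncal$ gives. Everything else is assembling the already-proved remarks \autoref{about_different_blocks}, \autoref{from_torso_to_G}, \autoref{same_order}, and \autoref{extendable_lemma_rewritten} in the right order.
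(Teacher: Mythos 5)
Your argument is essentially the paper's own proof: nestedness via \autoref{about_different_blocks}, then (for the nontrivial case $l\geq k+1$) use extendability of $\Ncal$ to get $Y$ nested with $\Ncal$, pass to the torso via \autoref{extendable_lemma_rewritten}, use extendability of $\Ncal_B$ to get a separation distinguishing $P_B,Q_B$ efficiently, and lift it back with \autoref{from_torso_to_G} and \autoref{same_order}.

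One step is misstated, though. Extendability of $\Ncal_B$ only gives you a separation $X$ of $G_T[B]$ that is \emph{nested with} $\Ncal_B$; it need not be a member of $\Ncal_B$, so you cannot write ``$\tilde X\in\tilde\Ncal_B\se\Mcal$'' and then invoke the nestedness of $\Mcal$ to conclude that $\tilde X$ is nested with $\Mcal$. (Strictly speaking $\tilde X$ is not even defined until you fix a nested set of the torso containing $X$.) The repair is the one the paper uses implicitly: since $\Ncal_B\cup\{X\}$ is a nested set of separations of $G_T[B]$, apply \autoref{about_different_blocks} to it (with $X$ placed last in the well-order, so the extensions of the members of $\Ncal_B$ are unchanged) to get directly that $\tilde X$ is nested with $\Ncal$, with $\tilde\Ncal_B$ and with every $\tilde\Ncal_D$ for other blocks $D$, hence with all of $\Mcal$ — which is all that extendability requires, membership in $\Mcal$ being unnecessary. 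With that substitution (and the same remark applies to your Case 1, where the separation really is in $\Ncal$, so no issue arises), your order bookkeeping via \autoref{Y_B}, \autoref{same_order} and the minimality of $l$ is correct and matches the paper.
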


\begin{proof}
The set $\Mcal$ is nested by \autoref{about_different_blocks}. The separations in $\Ncal$ are 
proper by assumption and those in some $\Ncal_\beta$ are proper as they are efficient. By 
\autoref{same_order} extensions of proper separations are proper. 

It remains to show for every $\ell\geq k+1$ and any two robust profiles $P$ and $Q$ in $\Pcal$ that 
are 
distinguished efficiently by a separation of order $\ell$ in $G$ that there is a separation nested 
with 
$\Mcal$ that distinguishes $P$ and $Q$ efficiently. 
We may assume that $P$ and $Q$ both have order $\ell+1$ as $\Pcal$ is a profile set. 
Since  $\Ncal$ is extendable, there is a separation $(A,B)$ of order $\ell$ nested with $\Ncal$ 
that distinguishes $P$ and $Q$. By \autoref{extendable_lemma_rewritten_3}, there is a unique 
$\Ncal$-block $\beta$ including the separator $A\cap B$ such that $P$ and $Q$ live in $\beta$. 
The restriction of $(A,B)$ to $\beta$ distinguishes the robust profiles $P_\beta$ and $Q_\beta$, 
induced by 
$P$ and $Q$ respectively. As $\Ncal_\beta$ is extendable, there is a separation $(A',B')$ of the 
torso $G_T[\beta]$ that distinguishes $P_\beta$ and $Q_\beta$ efficiently; in particular it has 
order at most $\ell$. By \autoref{about_different_blocks}, the extension $\widetilde 
{(A',B')} $ is nested with $\Mcal$. By  \autoref{same_order} it has order at most $\ell$. 
So by \autoref{from_torso_to_G} it distinguishes $P$ and $Q$ efficiently. 
As $P$ and $Q$ were arbitrary, the nested set $\Mcal$ is extendable. 
\end{proof}

\begin{proof}[Proof of \autoref{nested_set_exists}.]
We shall construct the nested set $\Ncal$ of \autoref{nested_set_exists} as a nested union of sets 
$\Ncal_k$ one for each $k\in \Nbb\cup\{-1\}$, where
$\Ncal_k$ is a nested extendable set of separations of order at most $k$ that distinguishes any two 
robust profiles efficiently that are distinguished by a separation of order at most $k$. 
We start the construction with $\Ncal_{-1}=\emptyset$. 
Assume that we already constructed $\Ncal_k$ with the above properties.

 We denote by $\Pcal$ the set of all robust profiles of $G$. 
For an $\Ncal_k$-block $\beta$, we denote the set of robust profiles in $\Pcal$ living in $\beta$ 
by 
$\Pcal(\beta)$, and by $\Pcal_\beta$ the induced robust profiles of $\beta$,  induced by robust 
profiles in 
$\Pcal(\beta)$. Note that  $\Pcal_\beta$ is a profile set by \autoref{induces_in_torso}. 

\begin{sublem}\label{r_empty}
 The set $R(k,\Pcal_\beta, G_T[\beta])$ is empty. 
\end{sublem}

\begin{proof}
Suppose for a contradiction, two robust profiles $P_\beta$ and $Q_\beta$ in $\Pcal_\beta$ can be 
distinguished by a 
separation $(A,B)$ of order at most $k$. Then $\widetilde{(A,B)}$ has the same order as 
$(A,B)$ by \autoref{same_order}. It distinguishes the robust profiles $P$ and $Q$ which induce 
$P_\beta$ and 
$Q_\beta$ by \autoref{from_torso_to_G}. Since 
$P_\beta$ and 
$Q_\beta$ are distinct, also $P$ and $Q$ are distinct. But then  by the induction 
hypothesis $P$ and $Q$ are distinguished by $\Ncal_k$ -- as they are distinguishable by the 
separation $\widetilde{(A,B)}$ of order at most $k$. This 
contradicts the fact that $P$ and $Q$ are both in $\Pcal(\beta)$.
\end{proof}

By \autoref{r_empty}, we can apply \autoref{S(k,r)_nice} to the torso graph $G_T[\beta]$ and 
$\Pcal_\beta$, yielding that the degenerator $S(k+1,\Pcal_\beta, G_T[\beta])$ is a nested 
extendable set 
of 
separations.
For each $S(k+1,\Pcal_\beta, G_T[\beta])$-block $\beta'$, we define $\Pcal(\beta')$ and  
$\Pcal_{\beta'}$ similarly as $\Pcal(\beta)$ and $\Pcal_{\beta}$, respectively. 

\begin{sublem}\label{s_empty}
 The degenerator $S(k+1,\Pcal_\beta',G_T[\beta'])$ is empty. 
\end{sublem}

\begin{proof}
Suppose for a contradiction that this degenerator is not empty. Then there is a relevant 
separation $(C,D)$ in $R(k+1,\Pcal_\beta', 
G_T[\beta'])$
that has a degenerated component. By \autoref{same_order} and \autoref{from_torso_to_G}, the 
extension $\widetilde{ (C,D)}$ of $(C,D)$ distinguishes efficiently two robust profiles in 
$\Pcal(\beta)$. In particular that extension is relevant in $G_T[\beta]$, that is, it is 
contained in $R(k+1,\Pcal_\beta,G_T[\beta])$.

By assumption there is a degenerated component $K'$ of $\beta'$ without the separator $C\cap D$. 
The component $K'$ is included in a component $K$ of $\beta$ without the 
separator $C\cap D$. As $K'$ and $K$ have the same neighbours in the separator $C\cap D$, also 
$K$ is degenerated. So the separation $(K\cup N(K), \beta\sm K)$ is in the degenerator 
$S(k+1,\Pcal_\beta, G_T[\beta])$. Thus $\beta'$ is disjoint from the component $K$. So $K'$ is 
empty, which is the desired contradiction. 
\end{proof}

By Zorn's Lemma we pick a maximal nested subset $\Ncal(\beta')$ of $R(k+1, \Pcal_{\beta'}, 
G_T[\beta'])$, that 
is, of separations of order at most $k+1$ in the graph $G_T[\beta']$ distinguishing 
efficiently two robust profiles in $\Pcal_{\beta'}$. 
By \autoref{pre_tight_version} the set $\Ncal(\beta')$ is extendable for $\Pcal_{\beta'}$ and 
distinguishes 
any two robust profiles of order $k+2$ in $\Pcal_{\beta'}$ efficiently. 

Let $\Ncal_{k+1}(\beta)$ be the union of the nested set $S(k+1,\Pcal_\beta,G_T[\beta])$ with the 
sets 
$\widetilde{\Ncal(\beta')}$, where $\beta'$ is an 
$S(k+1,\Pcal_\beta,G_T[\beta])$-block.
By \autoref{extendable_iterate}, $\Ncal_{k+1}(\beta)$ is a nested and extendable set of separations 
of 
order at most $k+1$ in $G_T[\beta]$.
Let $\Ncal_{k+1}$ be the union of $\Ncal_{k}$ with the sets $\widetilde{\Ncal_{k+1}(\beta)}$, where 
$\beta$ 
is an $\Ncal_{k}$-block.
By applying \autoref{extendable_iterate} again, we deduce that $\Ncal_{k+1}$ is a nested and 
extendable set of separations of order at most $k+1$ in $G$.

\begin{sublem}\label{check_eff}
$\Ncal_{k+1}$ distinguishes efficiently any two robust profiles $P$ and $Q$ of $G$ that are 
distinguished by a separation of order at most $k+1$.
\end{sublem}

\begin{proof}
As $\Ncal_k$ is a subset of $\Ncal_{k+1}$, we may assume  by 
the induction hypothesis that any separation distinguishing $P$ and $Q$ efficiently 
has order $k+1$. Let $(C,D)$ be such a 
separation distinguishing them efficiently. As $\Ncal_{k+1}$ is extendable by 
\autoref{extendable_iterate}, we can pick (and we do pick) $(C,D)$ so that it is nested with 
$\Ncal_{k+1}$.

By \autoref{extendable_lemma_rewritten_3}, there is an $\Ncal_{k}$-block $\beta$ including the 
separator $C \cap D$ such that $P$ and $Q$ live in $\beta$. The induced robust profiles of $P$ and 
$Q$ in 
$\beta$ are denoted by $P_\beta$ and $Q_\beta$, respectively. The restriction of $(C,D)$ is nested 
with the degenerator $S(k+1,\Pcal_\beta, 
G_T[\beta])$ by construction. 
By \autoref{extendable_lemma_rewritten_3}, there is an $S(k+1,\Pcal_\beta, 
G_T[\beta])$-block $\beta'$ including 
the separator $C\cap D$ such that $P$ and $Q$ induce distinct robust profiles in $\Pcal_{\beta'}$. 
These 
induced robust profiles are distinguished efficiently by $\Ncal(\beta')$ by construction. Applying 
\autoref{from_torso_to_G} twice and \autoref{same_order} yields that $P$ and $Q$ are distinguished 
by $\Ncal_{k+1}$. As every separation in $\Ncal_{k+1}$ has order at most $k+1$, the robust profiles 
$P$ and 
$Q$ are distinguished efficiently by $\Ncal_{k+1}$. 
\end{proof}

Finally, the nested union $\Ncal$ of the sets $\Ncal_{k}$ is a nested set of separations that 
distinguishes efficiently any two robust profiles of the same order, as desired.
\end{proof}

\begin{cor}\label{thm1}
 For any graph $G$, there is a nested set  $\Ncal$ of finite separations that contains for any 
two vertex-ends a separation distinguishing them efficiently.
\end{cor}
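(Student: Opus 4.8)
The plan is to derive \autoref{thm1} directly from \autoref{nested_set_exists} applied with $r=\infty$, using the profiles $P_\omega^k$ as the bridge between vertex ends and profiles.

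First I would invoke \autoref{nested_set_exists} with $r=\infty$ to obtain a nested set $\Ncal$ of separations that distinguishes efficiently any two $\infty$-robust profiles of the same order, and claim that this $\Ncal$ is as required. So let $\omega$ and $\mu$ be two distinct vertex ends. Since $\omega\neq\mu$, some finite set of vertices separates a ray belonging to $\omega$ from a ray belonging to $\mu$; taking the separation $s_C$ for the component $C$ in which $\omega$ lives yields a separation of finite order distinguishing $\omega$ from $\mu$. Let $k$ be the minimal order of such a separation. Then, by the observation preceding this corollary, $P_\omega^k$ and $P_\mu^k$ are both $\infty$-robust profiles of order $k+1$, and they are distinct: a separation $X$ of order $k$ in which $\omega$ lives and $\mu$ does not lives witnesses $X\in P_\omega^k$ and $X\ct\in P_\mu^k$.

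Next I would apply the defining property of $\Ncal$ to obtain a separation $X\in\Ncal$ distinguishing $P_\omega^k$ and $P_\mu^k$ efficiently, and check that $X$ also distinguishes $\omega$ and $\mu$ efficiently. Any separation distinguishing the two profiles must belong to one of them, hence has order at most $k$; and for a separation of order at most $k$, membership in $P_\omega^k$ is by definition the statement that $\omega$ lives in it, so a separation of order at most $k$ distinguishes the profiles $P_\omega^k, P_\mu^k$ if and only if it distinguishes the vertex ends $\omega,\mu$. Since $k$ was chosen so that $k$ is exactly the minimal order of a separation distinguishing $\omega$ and $\mu$, the two notions of efficiency agree, and therefore $X$ distinguishes $\omega$ and $\mu$ efficiently. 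As $\omega$ and $\mu$ were arbitrary, $\Ncal$ distinguishes any two vertex ends efficiently.

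There is no serious obstacle in this argument: the substantive content is entirely in \autoref{nested_set_exists}. The only points requiring care are the (asserted) verification that each $P_\omega^k$ is an $\infty$-robust profile of order $k+1$, and the bookkeeping that translates ``efficiently'' for profiles into ``efficiently'' for vertex ends, which is handled by choosing $k$ minimal so that the minimal order of a profile‑distinguishing separation coincides with that of an end‑distinguishing separation.
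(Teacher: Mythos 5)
Your argument is correct and is exactly the paper's route: the paper also obtains the corollary from \autoref{nested_set_exists} with $r=\infty$ via the profiles $P_\omega^k$, merely leaving implicit the bookkeeping you spell out (choosing $k$ minimal and matching the two notions of efficient distinction). No gaps.
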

\begin{proof}
By \autoref{end_is_tangle}, each vertex-end induces a tangle, which in return defines a robust 
profile. All these tangles are distinct (also as robust profiles) for 
different vertex-ends. So this is a consequence of 
\autoref{nested_set_exists}.
\end{proof}

\section{A tree-decomposition distinguishing the topological ends}\label{sec:undom_td}

In this section, we prove \autoref{undom_td} already mentioned in the introduction.
A key lemma in the proof of \autoref{undom_td} is the following.

\begin{lem}\label{star_td}
Let $G$ be a graph with a finite nonempty set $W$ of vertices.
Then $G$ has a star-decomposition\footnote{A \emph{star-decomposition is a tree-decomposition, 
where the decomposition tree is a star. }} $(S,Q_s|s\in V(S))$ of finite adhesion
such that each topological end lives in a part $Q_s$ with $s$ a leaf.

Moreover, only the central part $Q_c$ contains vertices of $W$, 
and for each leaf $s$, a topological end lives in the part $Q_s$, and the set $Q_s\sm Q_c$ is 
connected.
\end{lem}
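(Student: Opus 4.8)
The plan is to build the star decomposition $(S,Q_s \mid s\in V(S))$ by first applying \autoref{thm1} (or rather \autoref{nested_set_exists} with $r=\infty$) to obtain a nested set $\Ncal$ of separations of finite order distinguishing any two vertex ends efficiently, and then to extract from $\Ncal$ a single ``level'' of separations around the finite set $W$ so that the pieces hanging off $W$ each contain at most the ends living beyond a single separation in $\Ncal$. Concretely, since $W$ is finite, only finitely many separations of $\Ncal$ can have a vertex of $W$ in their boundary, and $W$ lies in a single $\Ncal$-block $B$ (one refines $\Ncal$ slightly if needed, or passes to the torso $G_T[B]$ as in \autoref{blocks_and_seps}); the componental separations $s_C$ for the components $C$ of $G-\partial(\text{relevant separations})$ that are ``on the far side'' of $W$ give the leaves. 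I would let the central part $Q_c$ be the torso-like part containing $W$ together with all the finitely many boundaries, and for each such far component $C$ let $Q_s = G[C]$ together with its (finite) neighbourhood, which is exactly $Q_s\cap Q_c$; this makes $Q_s\sm Q_c = G[C]$ connected and of finite adhesion, and makes $Q_c$ the only part meeting $W$.

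The key steps, in order, would be: (1) invoke \autoref{thm1} to get the nested set $\Ncal$ distinguishing all vertex ends efficiently; (2) argue that $W$ is contained in a single $\Ncal$-block $B$ — if not, throw away from $\Ncal$ the finitely many separations separating vertices of $W$ from each other, which keeps $\Ncal$ nested and still distinguishes every pair of ends except possibly pairs no end-pair needed (one checks no topological end is affected because those discarded separations had bounded order and we only need \emph{some} distinguisher, recovering it from the rest via extendability); (3) for the block $B$ containing $W$, enumerate the minimal separations $X\in \Ncal$ with $B\se V(X)$ and $\partial(X)$ ``just outside'' $W$, i.e.\ take the components $C$ of $G-\partial(X)$ disjoint from $W$, and show using nestedness of $\Ncal$ that these components, as $X$ ranges over $\Ncal$, are laminar so we may take the maximal ones; (4) define $Q_c := G - \bigcup_{C}(C)$ (keeping the neighbourhoods) and $Q_s := G[N(C)\cup C]$ for each maximal such $C$, verify the tree-decomposition axioms and finite adhesion (each $\partial(X)$ is finite); (5) show each topological end lives in some leaf: a topological end $\omega$ is undominated, so it is separated from $W$ by a finite separator, hence lives beyond some $X\in\Ncal$, hence in the part $Q_s$ of the maximal far component containing its rays; and (6) conversely show each leaf part $Q_s$ contains a topological end — here one uses that the separation producing $C$ was chosen from $\Ncal$ because it efficiently distinguishes two vertex ends, at least one of which is undominated (or: minimality forces $G[C]$ to contain a ray whose end is undominated), and that $Q_s\sm Q_c=G[C]$ is connected by construction.

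The main obstacle I expect is step (6), guaranteeing that \emph{every} leaf part contains a genuine topological (undominated) end rather than merely a vertex end or a dominated one. The separations in $\Ncal$ from \autoref{thm1} distinguish \emph{all} vertex ends, including dominated ones, so naively the far components $C$ might only harbour dominated ends or even no end at all. To handle this I would not use all of $\Ncal$ but prune it: keep only those separations needed to separate $W$ from the topological ends, i.e.\ for each topological end $\omega$ choose a separation of $\Ncal$ of minimal order separating $\omega$ from $W$ (these exist and form a nested subfamily), and take $Q_c$ to be bounded by the union of their boundaries restricted to a neighbourhood of $W$. Then any far component $C$ is, by minimality of the chosen separator and the fact that $\omega$ is undominated, exactly one in which a topological end lives; conversely if a far component contained no topological end we could have absorbed it into $Q_c$, contradicting the minimality/maximality choice. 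Making this pruning precise while preserving nestedness, finite adhesion, and the ``$Q_s\sm Q_c$ connected'' clause is the delicate part, and I would expect the bulk of the write-up to be spent there; the rest is routine verification of the tree-decomposition axioms.
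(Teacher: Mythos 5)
Your plan founders at the point where you pass from \autoref{thm1} to separations that separate the topological ends from $W$. The nested set $\Ncal$ given by \autoref{thm1} only distinguishes pairs of vertex ends from \emph{each other}; it need not contain any separation separating a given end from the finite set $W$. In the extreme case of a one-ended graph (say a ray, with $W$ a single vertex on it) there are no two ends to distinguish, so $\Ncal=\emptyset$ satisfies \autoref{thm1}, and your step (5) — ``$\omega$ is undominated, hence lives beyond some $X\in\Ncal$'' — has nothing to work with; likewise the separations you want to choose in your pruning step (``for each topological end $\omega$ choose a separation of $\Ncal$ of minimal order separating $\omega$ from $W$ — these exist'') simply need not exist inside $\Ncal$. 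This is exactly the missing idea in the paper's argument: one does not apply \autoref{thm1} to $G$ but to an auxiliary graph $G_W$ obtained by deleting $W$ and gluing in a copy of $K_\omega$ joined completely to $N(W)$. The added $K_\omega$ creates an artificial end $\tau$, and every vertex end of $G$ not dominated by a vertex of $W$ becomes an end of $G_W$ distinct from $\tau$, so the efficient distinguishers of $\tau$ from the other ends are forced to exist, avoid $W$'s replacement, and give precisely the separations ``around $W$'' that your construction needs (\autoref{auxi1}).

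A second, independent gap is your step (3): ``take the maximal ones.'' Maximal elements of the relevant nested family need not exist — there can be an infinite strictly increasing chain of separations whose orders also increase without bound, so the union has infinite boundary and cannot be used. The paper addresses this explicitly (see the discussion before the proof that \autoref{auxi1} implies \autoref{nested}) and gets around it with a further construction: it organises the nested family into a digraph of out-degree at most one, and in the components containing a ray $X_1\subseteq X_2\subseteq\ldots$ it takes the \emph{differences} $Y_i=X_i\sm X_{i-1}$, which are pairwise disjoint and still have finite boundary, rather than any maximal element. Without something playing this role, your leaves are not well defined in graphs where the ends can only be cut off from $W$ by separators of unbounded size. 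The remaining parts of your plan (absorbing end-less far components into the centre, verifying the star-decomposition axioms and connectedness of $Q_s\sm Q_c$) match what the paper does and are fine, but the two issues above are the substance of the proof and are not yet bridged.
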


\begin{proof}[Proof that \autoref{star_td} implies \autoref{undom_td}.]
We shall recursively construct a sequence  $\Tcal^n=(T^n,P_t^n|t\in V(T^n))$ of tree-decompositions 
of $G$ of finite adhesion as follows.
We start by picking a vertex $r'$ of $G$ arbitrarily and we obtain $\Tcal^1$ by applying 
\autoref{star_td} with $W=\{r'\}$. We refer to $r'$ as the \emph{rooting vertex}.
Assume that we already constructed $\Tcal^n$. For each leaf $s$ of $\Tcal^n$, 
we denote by $W_s$ the set of those vertices in $Q_s$ also contained in some other part of 
$\Tcal^n$.
Note that $W_s$ is contained in the part adjacent to $Q_s$ and thus is finite.
By \autoref{star_td}, we obtain a star-decomposition $\Tcal_s$ of $G[Q_s]$ 
such that no $w\in W_s$ is contained in a leaf part of $\Tcal_s$ and such that each 
topological end living in $Q_s$ lives in a leaf of $\Tcal_s$.
We obtain $\Tcal^{n+1}$ from $\Tcal^n$ by replacing each leaf part $Q_s$ by $\Tcal_s$, which is 
well-defined as the set $W_s$ is contained in a unique part of $\Tcal_s$.

By $r$, we denote the center of $\Tcal_1$.
For each $j<m<n$, the balls of radius $j$ around $r$ in $T^m$ and $T^n$ are the same.
Thus we take $T$ to be the tree whose nodes are those that are eventually a node of $T^n$.
For each node $t\in V(T)$, the parts $P_t^n$ are the same for $n$ larger than the distance between 
$t$ and $r$, and we take $P_t$ to be the limit of the $P_t^n$.

It is easily proved by induction that each vertex in the set  $W_s$ for $s$ a leaf of the tree 
$T^n$ has distance at least $n-1$ from the rooting vertex $r'$ in the graph $G$.
Thus for each $j<n$ the ball of radius $j$ around the rooting vertex $r'$ in $G$ is included in the 
union over all parts $P_t^n$ where $t$ is in the ball of radius $j$ around $r$ in $\Tcal_n$.
Hence $(T,P_t|t\in V(T))$ is a tree-decomposition, and it has finite adhesion by construction.

It remains to show that the ends of $T$ define precisely the topological ends of $G$, which is done 
in the following four sublemmas. 

\begin{sublem}\label{undom_in_tau}
 Each topological end $\omega$ of $G$ lives in an end of $T$.
\end{sublem}

\begin{proof}
There is a unique leaf $s$ of $T^n$ such that $\omega$ lives in $P_s^n$. Let $s_n$ be the 
predecessor of $s$ in $T^n$.
Then $\omega$ lives in the end of $T$ to which $s_1s_2\ldots$ belongs.
\end{proof}

\begin{sublem}\label{end_in_tau}
 In each end $\tau$ of $T$, there lives a vertex-end of $G$. 
\end{sublem}
\begin{proof}
Let $X$ be a spanning tree of the graph $G$. Our aim is to find a ray included in $X$ whose 
vertex-end lives in the end $\tau$.

Let $s_1s_2...$ be the ray in $T$ starting at $r$ that belongs to the end $\tau$. By construction, 
the sets $W_{s_i}$ are disjoint and finite. Let $U$ be the union of the sets $W_{s_i}$. Since each 
vertex is separated by some set $W_{s_i}$ from all but finitely many vertices of $U$, the tree $X$ 
does not include a subdivision of an infinite star with all leaves in $U$. Hence by the 
Star-Comb-Lemma\footnote{The Star-Comb-Lemma says that if $U$ is an infinite vertex set in 
a tree $X$, then either $X$ contains a subdivision of an infinite star with all leaves in $U$ or 
$X$ contains a comb with all leaves in $U$; here a comb is obtained from a ray by attaching a path
at each vertex. } \cite[Section 8]{DiestelBook10}, there is a comb with infinitely many leaves in 
the set $U$. Thus the vertex end of the ray of that comb lives in the end $\tau$. 
\end{proof}

\begin{sublem}\label{unique_tau}
 No two distinct vertex-ends $\omega_1$ and $\omega_2$ of $G$ live in the same end $\tau$ of $T$.
\end{sublem}

\begin{proof}
Suppose for a contradiction, there are such vertex-ends $\omega_1$, $\omega_2$ living in the same 
end $\tau$.
Let $U$ be a finite separator separating  $\omega_1$ from $\omega_2$ and let $n$ be the 
maximum over the distances between the rooting  vertex $r'$ and a vertex in $U$.
Let $s$ be the unique node of the tree T on the ray starting at the root belonging to the end $\tau$ 
that has distance $n$ from the root. By construction, in the tree $T^{n+1}$ the node $s$ is leaf. 
Let $C_i$ be the component of the graph $G-U$ in which the vertex-end $\omega_i$ lives. 
Recall that the leaf-part $Q_s$ (of $\Tcal_{n+1}$) with the separator $W_s$ removed is connected. 
Since 
the set $W_s$ separates the separator  $U$ from the set $Q_s\sm W_s$, the connected 
set $Q_s\sm W_s$ is contained in a component of the graph $G-U$. As the vertex-end $\omega_i$ 
lives in the 
graph $Q_s\sm W_s$ by assumption, it must be that the set $Q_s\sm W_s$ is a subset of the 
component $C_i$.
Hence the components $C_1$ and $C_2$ intersect, which is the desired contradiction.
\end{proof}

\begin{sublem}\label{undom_omega}
 No vertex $u$ dominates a vertex-end $\omega$ living in some end of $T$.
\end{sublem}

\begin{proof}Suppose for a contradiction a vertex $u$ dominates the vertex-end $\omega$.
Let $n$ be the distance between the vertex $u$ and the rooting vertex $r'$ in $G$. Then there is a 
leaf $s$ of the tree $T^{n+1}$ such that the vertex-end $\omega$ lives in the leaf-part $Q_s$.
Thus the finite set $W_s$ separates the vertex $u$ from the vertex-end $\omega$, contradicting the 
assumption that the vertex $u$ dominates the vertex-end $\omega$.
\end{proof}

\autoref{undom_in_tau}, \autoref{end_in_tau}, \autoref{unique_tau} and \autoref{undom_omega} imply 
that the ends of $T$ define precisely the topological ends of $G$, as desired.
\end{proof}

\begin{rem}\label{rem:connectedness}
Let $(T,\leq)$ be the tree order on $T$ as in the proof of \autoref{undom_td} where the root $r$ is 
the smallest element.
We remark that we constructed $(T,\leq)$ such that $(T,P_t|t\in V(T))$ has the following additional 
property. 
For each edge $tu$ with $t\leq u$, the vertex set $\bigcup_{w\geq u} V(P_w)\sm V(P_t)$ is connected.

Moreover, we construct $(T,P_t|t\in V(T))$ such that if $st$ and $tu$ are edges of $T$ with $s\leq 
t\leq u$, then $V(P_s)\cap V(P_t)$ and $V(P_t)\cap V(P_u)$ are disjoint.
\end{rem}

In order to prove \autoref{star_td}, we need the following.

\begin{lem}\label{nested}
Let $G$ be a connected graph and $W$ a finite and nonempty vertex set of $G$.
Then there is a set $\Xcal$ of separations $(A_i,B_i)$ of finite order such that 
every vertex-end not dominated by a vertex of $W$ lives in a side $B_i$.
Moreover, the sets $B_i\sm A_i$ are disjoint and the set $W$ is vertex-disjoint from all sides 
$B_i$. 
\end{lem}

\begin{proof}[Proof that \autoref{nested} implies \autoref{star_td}.]
If a set $B_i\sm A_i$ has several components, we replace the separation $(A_i,B_i)$ in $\Xcal$ 
by the set of separations for the form $(G\sm C, C\cup N(C))$, where $C$ is a component of $B_i\sm 
A_i$. 
Call the resulting set $\Xcal'$. By replacing $\Xcal$ by $\Xcal'$ if necessary, we may assume that 
all sets $B_i\sm A_i$ in \autoref{nested} are connected. 

We may assume that the graph $G$ in \autoref{star_td} is connected.
Our aim is to construct a star-decomposition of $G$. It gets a leaf-part for every every separation 
$(A_i,B_i)$ in $\Xcal$ such that a topological end lives in $B_i$. Its part is $B_i$. 

Let $C$ be the intersection of the sides $A_i$ with $(A_i,B_i)\in \Xcal$ together with all sides 
$B_i$ such that no topological end lives in $B_i$. 
The set $C$ gets the central part of our star-decomposition. 
By construction, this is a star-decomposition and it has finite adhesion. 
This star-decomposition has all the desired properties by construction. 
\end{proof}

The rest of this section is devoted to the proof of \autoref{nested}.
We shall need the following lemma.

\begin{lem}\label{auxi1}
Let $G$ be a connected graph and $W$ a finite nonempty vertex set.
There is a nested set $\Ncal$ of proper separations $(A_i,B_i)$ of finite order such that 
every vertex-end not dominated by a vertex of $W$ lives in a side $B_i$ and the set $W$ is 
vertex-disjoint from all sides $B_i$. 

Moreover, if two distinct separations $(A_i,B_i)$ and $(A_j,B_j)$ of $\Ncal$ satisfy 
$(A_i,B_i)\leq(A_j,B_j)$, then the order of $(A_i,B_i)$ is strictly larger 
than the order of $(A_j,B_j)$.
\end{lem}

\begin{proof}
We obtain the graph $G_W$ from the graph $G$ by first deleting the vertex set $W$ and then adding a 
copy of  the complete graph\footnote{By $K_\omega$ we denote the complete graph on countably many 
vertices.} $K_\omega$ in such a way that it is joined completely to the neighbourhood of $W$ in $G$.
Applying \autoref{thm1} to the graph $G_W$, yields a nested set $\Ncal'$ of separations of 
finite order such 
that any two vertex-ends of $G_W$ are distinguished efficiently by a separation in $\Ncal'$.
Let $\tau$ be the vertex-end to which the rays of the newly added copy of $K_\omega$ belong.
Let $\Ncal''$ consist of those separations in $\Ncal'$ that distinguish $\tau$ efficiently from 
some other 
vertex-end. By reversing separations in $\Ncal''$ if necessary, we may assume that the added graph 
$K_\omega$ is included in the side $A$ for every separation $(A,B)\in \Ncal''$. 
As the separations in $\Ncal''$ distinguish efficiently, for no separation $(A,B)$ in $\Ncal''$ 
the side $B$ contains a vertex of the added graph $K_\omega$.

Given a natural number $k$, a \emph{$k$-sequence} $((A_\alpha, B_\alpha)|\alpha\in \gamma)$ 
(for $\Ncal''$) is an ordinal indexed sequence of elements of $\Ncal''$ of 
order at most $k$ such that if $\alpha<\beta$, then $B_\alpha\se B_\beta$.
(Recall that every separation in $\Ncal''$ is proper so $B_\alpha\se B_\beta$ implies that 
$(A_\beta,B_\beta)\leq(A_\alpha,B_\alpha)$ by \autoref{proper_nested}.) 
The \emph{union separation} of a $k$-sequence $((A_\alpha, B_\alpha)|\alpha\in \gamma)$ is the 
separation obtained by taking the union over all $B$-sides and the intersection of all $A$-sides, 
formally it is: $(\bigcap_{\alpha\in \gamma} A_\alpha, \bigcup_{\alpha\in \gamma} B_\alpha)$.

The set $\Ncal'''$ consists of all union separations of $k$-sequences of $\Ncal''$ 
for all $k$.
Since we allow constant sequences, the nested set $\Ncal''$ is included in the set $\Ncal'''$.
A standard transfinite induction argument yields that the set $\Ncal'''$ is 
nested.\footnote{Given two members of $\Ncal'''$, by replacing their underlying sequences by 
different sequences with the same union, one may assume that all members of their sequences are 
nested with each other in the same way (of the four ways separations could be nested). Hence by 
transfinite induction also their unions must be nested in that way.}
Given a natural number $k$, the set $\Ncal_k$ consists of those separations of the 
nested set $\Ncal'''$ that have order at most $k$; and the set $\Ncal_k'$ consists of those 
 elements $(A,B)$ of the nested set $\Ncal_k$ whose $B$-side is inclusion-wise maximal in $\Ncal_k$.

We take $\Ncal_W$ to be the union of the nested sets $\Ncal_k'$.
By construction, for each separation $(A,B)$ in $\Ncal_W$, the side $B$ contains no vertex of the 
 added graph $K_\omega$. We obtain $\Ncal$ from $\Ncal_W$ by replacing each separation $(A,B)$ in 
$\Ncal_W$ by the separation where we modify the side $A$ by replacing the added graph $K_\omega$ by 
the finite vertex set $W$. Clearly, the set $\Ncal$ is a nested set of proper\footnote{This 
properness follows from the fact that $(A,B)$ comes from a separation of $G_W$ distinguishing two 
tangles efficiently and the modifications made by going from $G_W$ to $G$ preserve being proper.} 
separations of the graph $G$. 

We claim that the nested set $\Ncal$ has all the properties stated in \autoref{auxi1}:
the `Moreover'-part is clear by construction.
Thus it remains to show that each vertex-end $\omega$ of $G$ not dominated by a vertex of $W$ 
lives in a side $B_i$ for some separation $(A_i,B_i)$ in the nested set $\Ncal$. 

Let $R$ be a ray belonging to the vertex-end $\omega$. Since the vertex-end $\omega$ is not 
dominated by any vertex of $W$, for 
each vertex $x$ of $W$ there is a finite vertex set $S_x$ separating a subray $R_x$ of $R$ from 
$x$. We let $S$ be the union of these finite separators $S_x$. The finite set $S$ separates the 
intersection $R'$ of the subrays $R_x$ from vertex set $W$ in the graph $G$. In the graph $G-W$, 
the set $S$ separates the ray $R'$ from the added graph $K_\omega$. 
Let $\omega'$ be the vertex-end of $G_W$ to which the ray $R'$ belongs. Note that the separator $S$ 
witnesses that the vertex-end $\omega'$ is not equal to the vertex-end $\tau$ of the added 
$K_\omega$. Thus there is a separation $(A_i,B_i)$ of $\Ncal'''$ so that the vertex-end $\omega'$ 
lives in the side $B_i$. Let $k$ be the order of the separation $(A_i,B_i)$. By Zorn's 
lemma, the nested $\Ncal'''$ contains a separation $(A',B')$ with $B_i\se B'$ of order at most $k$ 
whose $B$-side is inclusion-wise maximal amongst all separations of $\Ncal'''$ of order at most 
$k$. By construction the separation $(A',B')$ is in the nested set $\Ncal_k'$ and includes a subray 
of $R'$. So the separation $((A'\sm K_\omega)\cup W,B')$ is in $\Ncal$ and the vertex-end 
$\omega$ lives 
in the side $B'$. This completes the proof.\footnote{We sketch an alternative proof. First one 
shows that we may assume that every separation $(A,B)$ in $\Ncal''$ has the property that $A\sm B$ 
is connected. Then instead of referring to $\Ncal'''$ one can use the following fact: let $P$ be a 
tangle of order $k+1$ and let $(A_i,B_i)$ be a sequence of separations of order at 
most $k$ distinguishing tangles efficiently such that each $A_i$ is big in $P$ and the sets $A_i\sm 
B_i$ are connected and $A_{i+1}\se A_i$. Then the sequence has only finitely many distinct members.}
\end{proof}

Next we show how \autoref{auxi1} implies \autoref{nested}.
A good candidate for the nested set $\Xcal$ of \autoref{nested} might be the separations $(A,B)$ in 
the nested set $\Ncal$ such that the side $B$ is inclusion-wise maximal amongst members of the 
nested set $\Ncal$.
However, there might be an infinite strictly increasing -- that is, the $B$-sides are 
strictly increasing -- sequence of members in $\Ncal$, whose orders 
are also strictly increasing, so that we cannot expect that the union of all these $B$-sides is a 
side of a finite order separation, and hence cannot come from a separation in $\Ncal$, see 
\autoref{eg_canhappen}.
Thus we have to make a more sophisticated choice for $\Xcal$ than just taking the `maximal members' 
of $\Ncal$. 
\begin{eg}\label{eg_canhappen}
The set $W$ just consists of a single vertex, which is complete to a ray. At each initial path 
$P_n$ of the ray, we attach a copy of the ladder of width $n$. The set $\Ncal$ consists of those 
separations $(A_n,B_n)$ with separator $P_n$ separating the first $n$ ladders from the vertex of 
$W$. These separations are strictly increasing in order and this sequence does not have a 
maximal element in the sense explained above this example. 
\end{eg}

\begin{proof}[Proof that \autoref{auxi1} implies \autoref{nested}.]
Let $\Ncal$ be a nested set as in \autoref{auxi1}. The first step in the proof is to define a graph 
$H$ that visualises the structure of the nested set $\Ncal$. 

Let $(A,B)$ be a separation of the nested set $\Ncal$ such that there is another 
separation $(C,D)$ in $\Ncal$ with $B\se D$. (Recall that all separations of the set $\Ncal$ are 
proper. So this implies that $(C,D)\leq (A,B)$ by \autoref{proper_nested}.) 
Then the order of $(C,D)$ is larger than that of $(A,B)$. 

Such a separation $(C,D)$ is called a \emph{successor} of the 
side $B$ (we use the 
condensed notation `of $B$' instead of `of $(A,B)$' as every proper separation is uniquely 
determined by one of its sides); the separation $(C,D)$ is an \emph{immediate succesor} if it has 
minimal order amongst all successors. 
Let $H$ be the digraph with vertex set $\Ncal$
where we put in the directed edge $BD$ if the separation $(C,D)$ is an immediate successor of $B$. 
A \emph{connected component of $H$}, is a connected component of the underlying graph of $H$.
A typical example for a connected component of the graph $H$ is the canopy tree, see 
\autoref{canopy}. 

\begin{figure}
\begin{center}
   	  \includegraphics[height=2cm]{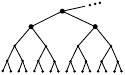}
   	  \caption{The canopy tree. We obtain an interesting example for the graph $H$ by 
directing all edges away from the leaves.}\label{canopy}
\end{center}
   \end{figure}

\begin{sublem}\label{dist_comp}
Let $(A',B')$ and $(C',D')$ be separations in $\Ncal$. Then $B'\se D'$ if and only if there is 
a directed path in $H$ from $B'$ to $D'$.
Moreover, if two separations $(A,B)$ and $(C,D)$ in $\Ncal$ are not 
joined by a directed path, then $B\sm A$ and $D\sm C$ are disjoint. 
\end{sublem}

\begin{proof}
Clearly, if there is a directed path from $B'$ to $D'$, then $B'\se D'$. Conversely, let $(A',B')$ 
and $(C',D')$ be separations in $\Ncal$ with $B'\se D'$. 
Let $(A_n,B_n)$ be a sequence of distinct separations in $\Ncal$ such that $B'\se B_1\se... \se 
B_n\se D'$. 
By \autoref{auxi1}, $n\leq |\partial(D')|-|\partial(B')|+1$. Thus there is a maximal such chain 
$(E_n,F_n)$, which satisfies
$F_1=B'$ and $F_n=D'$ and $F_{i+1}\in F(B_i)$ for all $i$ between $1$ and $n-1$. Hence $F_1...F_n$ 
is a path from $B'$ to $D'$.

To see that ``Moreover''-part, let  $(A,B)$ and $(C,D)$ be separations in $\Ncal$.
Since the set $W$ is nonempty, the side $B$ does not 
include the side $C$. As the sides $B$ and $D$ cannot be subsets of one another by 
assumption, the nestedness yields that  $B\se C$ and $D\se A$. Hence the sets $D\sm C$ 
and $B\sm A$ are disjoint. 
\end{proof}

\begin{sublem}\label{deg-lem}
Each vertex $v$ of $H$ has out-degree at most one.\footnote{We do not use it in our proof but it 
follows from this lemma that $H$ is a forest. Indeed, any cycle included in $H$ must by the 
outdegree condition be a directed cycle. This, however, is impossible by \autoref{dist_comp}.} 
\end{sublem}

\begin{proof}
Suppose for a contradiction the side $v$ has out-degree at least 2.
Then there are distinct immediate successors $(A,B)$ and $(C,D)$. By the conditions of 
\autoref{auxi1}, it must be that neither $B\se D$ 
nor $D\se B$.
Thus $B\sm A$ and $D\sm C$ are disjoint by \autoref{dist_comp}.
Since $v\se B\cap D$, it cannot be the side of a proper separation.
This is the desired contradiction to the assumption that $v$ is a side of a separation in $\Ncal$.
\end{proof}

\begin{sublem}\label{path}
 Any undirected path $P$ joining two vertices $v$ and $w$ contains a vertex $u$
such that $vPu$ and $wPu$ are directed paths which are directed towards $u$.
\end{sublem}

\begin{proof}
It suffices to show that the path $P$ contains at most one vertex of out-degree zero on $P$.
If it contained two such vertices, then between them would be a vertex of out-degree two, which 
is impossible by \autoref{deg-lem}.
\end{proof}

We define the set $\Xcal$ as the union of sets $\Xcal_K$, one for each component $K$ of $H$.
The sets $\Xcal_K$ are defined as follows.
If a component $K$ contains a vertex $v_K$ of out-degree $0$, 
then by \autoref{path} $K$ cannot contain a second such vertex and 
for any other vertex $v$ in the component $K$, there is a directed path from $v$ to the vertex 
$v_K$ directed towards $v_K$.
Hence the side $v_K$ includes any other the side $v$ that is a vertex of the component $K$. We 
choose for $\Xcal_K$ the unique separation in $\Ncal$ with the side $v_K$; here the uniqueness 
follows from the fact that proper separations are uniquely determined by one of their sides.

Otherwise, every vertex of the component $K$ has outdegree precisely one by \autoref{deg-lem}. 
Since the component $K$ cannot contain a directed cycle by 
\autoref{dist_comp}, it must contain a directed ray; that is, a ray $B_1B_2\ldots$  with 
$B_i \se B_{i+1}$. 
In this case, we define the set $\Xcal_K$ to consist of the separations $(C_i,D_i)$ defined as 
follows.
Let $(A_i,B_i)$ be the unique proper separation with side $B_i$.
We let $(C_1,D_1)=(A_1,B_1)$. Roughly, we obtain $(C_i,D_i)$ from $(A_i,B_i)$ by flipping the set 
$B_{i-1}\sm A_{i-1}$ from the side $B_i$ to the side $A_i$; in formulas for $i>1$, we let 
$C_i=A_i\cup (B_{i-1}\sm A_{i-1})$ and $D_i=B_i\sm (B_{i-1}\sm A_{i-1})$. 
Note that the order of $(C_i,D_i)$ is bounded by the sum of the orders of $(A_i,B_i)$ and 
$(A_{i-1},B_{i-1})$, and thus finite. 
Since no side $B_i$ contains a vertex of $W$, the same is true for the sides $D_i$.
This completes the definition of the set $\Xcal_K$ and thus $\Xcal$.

Any two distinct separations $(A,B)$ and $(C,D)$ in the set $\Xcal$ have the property that 
$B\sm A$ and $D\sm C$ are disjoint; indeed, if these separations are in the same set  
$\Xcal_K$, this is 
clear by construction.
Otherwise it follows from the definition of the side $D_i$ and \autoref{dist_comp}.
Thus it remains to prove the following:

\begin{sublem}\label{repre}
Every vertex-end $\omega$ not dominated by some vertex of $W$ lives in some side $B$ with $(A,B)\in 
\Xcal$.
\end{sublem}
\begin{proof}
By \autoref{auxi1}, there is a separation $(E,F)$ in the nested set $\Ncal$ such that the 
vertex-end $\omega$ lives in $F$. 
Let $K$ be the component of $H$ containing the vertex $F$.
If $\Xcal_K=\{v_K\}$, then $F\se v_K$; and we are done as the vertex-end $\omega$ lives in the side 
$v_K$.
Otherwise let the $B_i$ and the $D_i$ be as in the construction of $\Xcal_K$.
If $F=B_j$ for some $j$, then we pick $j$ minimal such that $\omega$ lives in $B_j$.
Since $\omega$ does not live in $B_{j-1}$, it must live in $D_j$, as desired.

Thus we may assume that the side $F$ is not equal to any side $B_j$.
Let $P$ be a path joining the vertex $F$ and the vertex $B_1=D_1$. 
By \autoref{path}, the path $P$ contains a vertex $u$
such that the subpaths $FPu$ and $B_1Pu$ are directed paths which are directed towards $u$.
Thus $F\se u$. Since the out-degree 
is at most one, the path $B_1Pu$ is a subpath of the ray $B_1B_2\ldots$.
Thus the vertex $u$ is equal to $B_j$ for some $j$. In particular, $F\se B_j$.

As the side $B_j$ is the union of the finitely many sides $D_1, D_2, ..., D_j$, the vertex-end 
$\omega$ has to live in some side $D_i$ with $i\leq j$. This completes the proof. 
\end{proof}
\end{proof}

Finally we deduce \autoref{Halin_type_intro}.

\begin{proof}[Proof that \autoref{undom_td} implies \autoref{Halin_type_intro}.]
By \autoref{undom_td}, $G$ has a tree-decomposition $(T,P_t|t\in V(T))$ of finite adhesion such that 
the ends of $T$ define precisely the topological ends of $T$, and we choose this tree-decomposition 
as in \autoref{rem:connectedness}. In particular, we can pick a root $r$ of $T$ such that for each 
edge $tu$ with $t\leq u$, the vertex set $\bigcup_{w\geq u} V(P_w)\sm V(P_t)$ is connected.

Furthermore for each edge $tu$ with $t\leq u$, one may assume that if a vertex is in the separator 
$V(P_t)\cap V(P_u)$, then it has a neighbour in $P_u\sm P_t$ by deleting other vertices from 
the part $P_u$ if necessary. 

Thus for each such edge $tu$, there is a finite connected subgraph $S_u$ of the 
induced subgraph $G[\bigcup_{w\geq u} V(P_w)]$ that contains the separator $V(P_t)\cap V(P_u)$. 
Let $Q_t$ be a maximal subforest of the union of the $S_u$, where the union ranges over all upper 
neighbours $u$ of $t$.
We recursively build a maximal subset $U$ of $V(T)$ such that if $a,b\in U$, then $Q_a$ and $Q_b$ 
are vertex-disjoint.
In this construction, we first add the nodes of $T$ with smaller distance from the root.
This ensures by the ``Moreover''-part of \autoref{rem:connectedness} that 
$U$ contains infinitely many nodes of each ray of $T$.

Let $S'$ be the union of those $Q_t$ with $t\in U$. 
We obtain $S$ by extending $S'$ to a spanning tree of $G$, and rooting it at some $v\in V(S)$ 
arbitrarily.
By the Star-Comb-Lemma \cite[Section 8]{DiestelBook10}, each spanning tree of $G$ contains for each 
topological end $\omega$ a ray belonging to $\omega$.

Thus it remains to show that $S$ does not contain two disjoint rays $R_1$ and $R_2$ that both belong 
to the same topological end $\omega$ of $G$.
Suppose there are such $R_1$, $R_2$ and $\omega$.
Let $t_1t_2\ldots$ be the ray of $T$ in which $\omega$ lives.
Let $n$ be so large that both $R_1$ and $R_2$ meet $P_{t_n}$. Then for each $m\geq n$, the set 
$S_{t_m}$ contains a path joining $R_1$ and $R_2$.
Thus the set $Q_{t_{m-1}}$ contains such a path. Since $Q_{t_{m-1}}\se S$ for infinitely many $m$, 
the tree $S$ contains a cycle, which is the desired contradiction.
\end{proof}

\begin{rem}\label{proof_more_general}
In the above proof of \autoref{Halin_type_intro} in the application of the Star-Comb-Lemma we used 
the property that topological ends are not dominated by vertices. 

However, with a little bit more care, one can show more generally that if a 
graph has a tree-decomposition such that the ends of the decomposition tree define 
precisely a set 
$\Psi$ of vertex-ends of the graph, then this graph has a 
spanning tree that is end-faithful for that set $\Psi$. 

To see that we show that if a graph $G$ has such a tree-decomposition $(T,P_t|t\in V(T))$, then it 
has a  connected subgraph $G'$ with the same vertex set such that all vertex-ends in the set 
$\Psi$ are vertex-ends of $G'$ that are topological. 

It is fairly easy to see that we may assume that the tree-decomposition $(T,P_t|t\in V(T))$ has the 
following additional properties.
\begin{enumerate} 
\item By $Q_t$ we denote the union of the part $P_t$ with all parts $P_u$, where 
$u$ is above $t$ 
in the 
decomposition tree, without the part $P_s$; here $s$ is the downward-neighbour of $t$ and $t$ 
is not the root. Then the graph $Q_t$ is connected.
 \item Every part $P_t$ contains a finite connected set $C_t$ such that $C_t\se P_t\sm P_s$ and 
every 
vertex of the separator $V(P_t)\cap V(P_s)$ is in the neighbourhood of $C_t$; here $s$ and $t$ are 
as in the first property.
 \item  Let $s\leq t\leq u$ such that $st,tu\in E(T)$. Then the separator $V(P_t)\cap V(P_u)$ is 
not a subset of $V(P_s)\cap V(P_t)$.
\end{enumerate}

For a node $t$ different from the root, let $K_t$ be the union of $P_t$ with all sets $C_u$, where 
$u$ is an 
upward-neighbour of $t$, without the part $P_s$, where $s$ is the down-ward neighbour of $t$.
Since the set $Q_t$  is connected and no separator $V(P_t)\cap V(P_u)$ is a subset of $P_s$ 
by the third property, the graph $K_t$ must be connected. If $t$ is the root, we define $K_t$ the 
same but without removing a part $P_s$; this graph is connected as the graph $G$ is connected. 

We define $G'$ to be the union of the connected subgraphs $K_t$. This graph is connected. It is 
straightforward to check that every vertex-end of $\Psi$ is a 
topological end of $G'$. So we can apply \autoref{Halin_type_intro} to the graph $G'$ to deduce 
that the graph $G$ has a spanning tree that is end-faithful for the set $\Psi$. 
\end{rem}

\section{Concluding Remarks}\label{concluding}

We have shown that any graph has a tree-decomposition of finite adhesion that distinguishes its 
topological ends. 
It is natural to ask whether such a statement is true if we replace `topological 
ends' by some other classes of vertex-ends. 

Let us be more precise. A class $\Ccal$ of vertex-ends 
is \emph{tree-distinguishable} if every graph has a tree-decomposition of finite adhesion that 
distinguishes any two vertex-ends that are in $\Ccal$. We would like to know which natural 
classes $\Ccal$ of vertex-ends are tree-distinguishable?

As demonstrated in \autoref{sketch_t2} the class of all vertex-ends is not tree-distinguishable.
A class that has received a lot of attention in the literature, see for example 
\cite{hp:transi}, is the class of 
`$k$-thin' vertex-ends; here, given a natural number $k$, a vertex-end $\omega$ is \emph{$k$-thin} 
if the number $k_1$ of vertices dominating $\omega$ and the cardinality $k_2$ of any family of 
vertex-disjoint rays belonging to $\omega$ sum up to at most $k$, that is $k_1+k_2\leq k$. 
For example, thin ends are $k$-thin for every sufficiently large value of $k$; indeed vertex-ends 
dominated by 
infinitely many vertices have infinitely many vertex-disjoint rays belonging to that end, see 
\cite{Diestelbookcurrent}. Although the class of thin vertex-ends is not tree-distinguishable by 
\autoref{thin_not_distinguished}, the class of $k$-thin vertex-ends is; this is well-known 
and also follows from \autoref{kalmosttopo} below.

For general graphs the class of $k$-thin 
vertex-ends and topological ends are not subsets of one another. Is there a natural 
tree-distinguishable class that contains both of them?

Yes, there is such a class and the proof that it is tree-distinguishable is an easy application of 
our main theorem, as follows.
Given a natural number $k$, a vertex-end is \emph{$k$-dominated} if it is dominated by at 
most $k$ vertices.
For example, the $0$-dominated vertex-ends are the topological ends.   
Clearly every $k$-thin vertex-end is $k$-dominated.
The following extension of \autoref{undom_td} implies that the class of $k$-dominated vertex-ends 
is 
tree-distinguishable for any fixed $k$.

\begin{thm}\label{kalmosttopo}
For any fixed natural number $k$, every graph has a tree-decomposition $(T,\Vcal)$ of finite 
adhesion such that the ends of $T$ 
define precisely the $k$-dominated vertex-ends of $G$. 
\end{thm}

\begin{proof}
We shall prove \autoref{kalmosttopo} by induction on $k$. All trees in this proof are rooted; and 
we denote their root by $r$. Along that induction we shall prove the 
following property: if a vertex-end lives in a part $P_t$ of $(T,\Vcal)$, then it is dominated by 
$k+1$ vertices contained in the separator $V(P_t)\cap V(P_s)$, where $s$ is the neighbour of $t$ in 
the tree $T$ that is nearer to the root. (In particular, no vertex-end lives in the root part 
$P_r$, which may be assumed to consist of finitely many vertices). 

We remark that in our proof of \autoref{undom_td} we could construct the tree-decomposition 
$(T,\Vcal)$ such that if a vertex-end lives in a part $P_t$, then it is dominated by at least one 
vertex  contained in the separator $V(P_t)\cap V(P_s)$ (with $s$ as above). Indeed, we just have to 
use the variant of \autoref{star_td}, where we replace `each topological end'
by `each vertex-end not dominated by a vertex of $W$'. This variant is deduced with the same proof 
from \autoref{nested} except that we replace `topological end' by `vertex-end not dominated by a 
vertex of the set $W$'.  
During this proof we 
assume that  \autoref{undom_td} includes this additional statement. 

So the base case $k=0$ follows from \autoref{undom_td}.

Now assume that we already have a suitable tree-decomposition $(T,\Vcal)=(T,P_t|t\in V(T))$ for $k$.
We take each torso of a part of that tree-decomposition and delete the separator $V(P_t)\cap 
V(P_s)$. Call the resulting graph $H_t$. 
Now we apply \autoref{undom_td} to the graph $H_t$. Call this tree-decomposition $(T[t],\Vcal[t])$. 
We obtain a tree-decomposition of the part $P_t$ by adding the separator $V(P_t)\cap 
V(P_s)$ to all parts of the tree-decomposition $(T[t],\Vcal[t])$. Call that tree-decomposition 
$(T'[t],\Vcal'[t])$. 
 
We obtain a suitable tree-decomposition from  $(T,\Vcal)$ by replacing each part $P_t$ by the 
tree-decomposition $(T'[t],\Vcal'[t])$. This is well-defined as each separator of $(T,\Vcal)$ is a 
complete subgraph of $H_t$. Indeed, then we can attach the parts of $(T,\Vcal)$ above $P_t$ at the 
unique part of $(T'[t],\Vcal'[t])$ nearest to the root that includes the corresponding separator. 
It is straightforward to check that this tree-decomposition has the desired property. 
\end{proof}

In this paper we considered various classes of vertex-ends.
In \autoref{class_inclusion}, we depict the inclusion-relations that hold between the classes of 
vertex-ends considered in this paper.
  In \autoref{tabelle} we summarise which classes of 
vertex-ends are tree-distinguishable and which classes have end-faithful spanning trees.
We recall that countable graphs have normal spanning trees and hence end-faithful spanning trees 
for vertex-ends. Hence the questions of this paper are of particular interest for uncountable 
graphs. 
\begin{figure}
 \begin{center}
   	  \includegraphics[height=5.5cm]{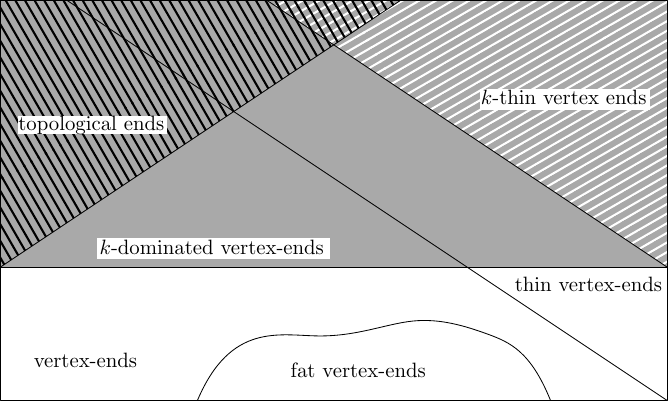}
   	  \caption{The classes of vertex-ends considered in this paper. }\label{class_inclusion}
\end{center}
   \end{figure}

\begin{figure}
 \begin{center}
   	  \includegraphics[height=6cm]{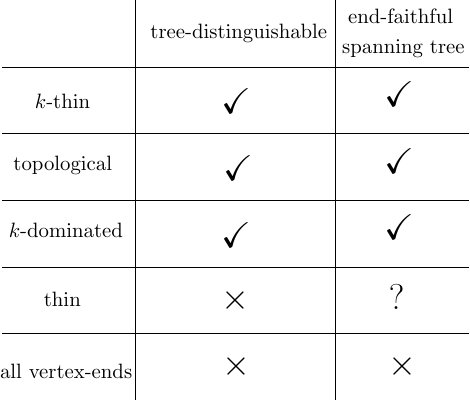}
   	  \caption{We put a tick in an entry of this table if the corresponding class of 
vertex-ends is tree-distinguishable or there is an end-faithful spanning tree for that class, 
respectively. If it is false we put a cross. In the one case where it is open, we 
put a question mark.}\label{tabelle}
\end{center}
   \end{figure}

All positive results of \autoref{tabelle} are proved in this paper. The counterexamples 
corresponding to the cross in the bottom right corner were constructed by Seymour and Thomas and 
Thomassen as mentioned in the introduction. The other two crosses are derived from 
\autoref{sketch_t2} and \autoref{thin_not_distinguished}.   
The question mark in \autoref{tabelle} corresponds to the following question. 
   
\begin{que}\label{q_thin}
Does every graph have an end-faithful spanning tree for the thin vertex-ends? 
\end{que}

The strengthening of \autoref{q_thin} with `thin' replaced by the class of vertex-ends that are 
dominated by finitely (or more generally: countably) many vertices is also open. Since this class 
contains the topological ends, 
this possible strengthening implies \autoref{Halin_type_intro}. 

However, by \autoref{thin_not_distinguished} the class of thin vertex-ends is not 
tree-distinguishable. Thus, if the answer to \autoref{q_thin} was `yes', the obvious strategy 
suggested by \autoref{proof_more_general}  directly via tree-decompositions defining precisely the 
thin ends cannot succeed. 

In the introduction of this paper we claimed that we repaired Halin's Conjecture. In the presence 
of \autoref{q_thin} this might deserve some further justification. 
Given the counterexamples against Halin's original conjecture, we explain the subtle difference 
between the following two questions.
\begin{enumerate}
 \item How can Halin's Conjecture be repaired?
 \item What is the largest possible natural subclass of the vertex-ends for which the weakening of 
Halin's Conjecture is true? 
\end{enumerate}
The second question is still open. Candidates for that subclass are the $k$-dominated vertex-ends, 
or more generally the finitely or countably dominated ones. 
These subclasses are vertex-ends with some finiteness or countability assumption. 
Unlike for the topological ends, 
it would not have been natural to ask the weakening of Halin's Conjecture for those subclasses in 
1964. Hence these subclasses can hardly 
give answers to question one.
The thin vertex-ends are also vertex-ends together with some finiteness assumption and might have 
served as a solution to question one if Diestel's original problem was true. Since the original 
problem is not true (and in fact can be repaired for the topological ends), the author is convinced 
that \autoref{Halin_type_intro} is the most natural way to repair Halin's Conjecture.

\section*{Acknowledgement}

I thank two anonymous referees whose valuable comments improved this paper. Furthermore I thank 
Nathan Bowler for pointing out an error in an earlier version. 

\bibliographystyle{plain}
\bibliography{literatur}

\end{document}